\newtheorem{theorem}{Theorem}[section]
\newtheorem{lemma}{Lemma}[section]
\newtheorem{proposition}{Proposition}[section]
\newtheorem{corollary}{Corollary}[section]
\theoremstyle{definition}
\newtheorem{remark}{Remark}[section]
\numberwithin{equation}{section}
\newcommand\blfootnote[1]{\begingroup\renewcommand\thefootnote{}\footnote{#1}\addtocounter{footnote}{-1}\endgroup}
\begin{document}

\title{
{\bf\Large Multiplicity of positive periodic solutions in the superlinear indefinite case via coincidence degree}
\footnote{Work supported by the Grup\-po Na\-zio\-na\-le per l'Anali\-si Ma\-te\-ma\-ti\-ca, la Pro\-ba\-bi\-li\-t\`{a} e le lo\-ro
Appli\-ca\-zio\-ni (GNAMPA) of the Isti\-tu\-to Na\-zio\-na\-le di Al\-ta Ma\-te\-ma\-ti\-ca (INdAM).
Progetto di Ricerca 2015: Problemi al contorno associati ad alcune classi di equazioni differenziali non lineari.}}
\author{{\bf\large Guglielmo Feltrin}
\vspace{1mm}\\
{\it\small SISSA - International School for Advanced Studies}\\
{\it\small via Bonomea 265}, {\it\small 34136 Trieste, Italy}\\
{\it\small e-mail: guglielmo.feltrin@sissa.it}\vspace{1mm}\\
\vspace{1mm}\\
{\bf\large Fabio Zanolin}
\vspace{1mm}\\
{\it\small Department of Mathematics and Computer Science, University of Udine}\\
{\it\small via delle Scienze 206},
{\it\small 33100 Udine, Italy}\\
{\it\small e-mail: fabio.zanolin@uniud.it}\vspace{1mm}}

\date{}

\maketitle

\vspace{-2mm}

\begin{abstract}
\noindent
We study the periodic boundary value problem associated with the second order nonlinear differential equation
\begin{equation*}
u'' + c u' + \bigl{(}a^{+}(t) - \mu a^{-}(t)\bigr{)} g(u) = 0,
\end{equation*}
where $g(u)$ has superlinear growth at zero and at infinity, $a(t)$ is a periodic sign-changing weight, $c\in\mathbb{R}$ and $\mu>0$ is a real parameter.
We prove the existence of $2^{m}-1$ positive solutions when $a(t)$ has $m$ positive humps separated by $m$ negative ones
(in a periodicity interval) and $\mu$ is sufficiently large.
The proof is based on the extension of Mawhin's coincidence degree defined in open (possibly unbounded) sets and applies also to Neumann boundary conditions.
Our method also provides a topological approach to detect subharmonic solutions.
\blfootnote{\textit{AMS Subject Classification:} 34B18, 34B15, 34C25, 47H11.}
\blfootnote{\textit{Keywords:} superlinear indefinite problems, positive periodic solutions,
multiplicity results, subharmonic solutions, Neumann boundary value problems, coincidence degree.}
\end{abstract}

\section{Introduction}\label{section-1}

Let ${\mathbb{R}}^{+}:=\mathopen{[}0,+\infty\mathclose{[}$ denote the set of non-negative real numbers and let
$g\colon {\mathbb{R}}^{+} \to {\mathbb{R}}^{+}$ be a continuous function such that
\begin{equation*}
g(0) = 0, \qquad g(s) > 0 \quad \text{for } \; s > 0.
\leqno{(g_{*})}
\end{equation*}
In the present paper we study the problem of existence and multiplicity of positive $T$-periodic solutions
to the second order nonlinear differential equation
\begin{equation}\label{eq-1.1}
u'' + c u' + w(t) g(u) = 0,
\end{equation}
where $c\in\mathbb{R}$ and $w \colon \mathbb{R}\to\mathbb{R}$ is a $T$-periodic locally integrable weight function.
Solutions to \eqref{eq-1.1} are meant in the Carath\'{e}odory sense. A \textit{positive solution} is a solution
such that $u(t) > 0$ for all $t\in\mathbb{R}$. As is well known, solving the $T$-periodic problem for \eqref{eq-1.1}
is equivalent to find a solution of \eqref{eq-1.1} satisfying the boundary conditions
\begin{equation*}
u(0) = u(T), \qquad u'(0) = u'(T),
\end{equation*}
on $\mathopen{[}0,T\mathclose{]}$ (any other time-interval of length $T$ can be equivalently chosen).

Our aim is to consider a nonlinear vector field
\begin{equation*}
f(t,s) := w(t) g(s)
\end{equation*}
satisfying suitable assumptions which cover the classical superlinear indefinite case, namely $g(s) = s^{p}$, with $p>1$,
and $w(t)$ a sign-changing coefficient. Our main result guarantees the existence of at least $2^{m}-1$
positive $T$-periodic solutions provided that, in a time-interval of length $T$, the weight function presents
$m$ positive humps separated by negative ones and the negative parts of $w(t)$ are sufficiently large.
To be more precise, it is convenient to express $w(t)$ as depending on a parameter $\mu > 0$ in this manner:
\begin{equation*}
w(t) = a_{\mu}(t) := a^{+}(t) - \mu a^{-}(t),
\end{equation*}
where $a \colon \mathbb{R}\to\mathbb{R}$ is a $T$-periodic locally integrable function.
As usual, we denote by
\begin{equation*}
a^{+}(t):= \dfrac{a(t)+|a(t)|}{2} \quad \text{ and } \quad a^{-}(t):= \dfrac{-a(t)+|a(t)|}{2}
\end{equation*}
the \textit{positive part} and the \textit{negative part} of $a(t)$, respectively.
Then, a typical corollary of our main result (cf.~Theorem~\ref{MainTheorem}) reads as follows.

\begin{theorem}\label{th-1.1}
Suppose that there exist $2m+1$ points
\begin{equation*}
\sigma_{1} < \tau_{1} < \ldots < \sigma_{i} < \tau_{i} < \ldots < \sigma_{m} <
\tau_{m} < \sigma_{m+1}, \quad \text{with } \; \sigma_{m+1} - \sigma_{1} = T,
\end{equation*}
such that $w(t) \succ 0$ on $\mathopen{[}\sigma_{i},\tau_{i}\mathclose{]}$ and $w(t) \prec 0$ on $\mathopen{[}\tau_{i},\sigma_{i+1}\mathclose{]}$.
Let $g(s)$ be a continuous function satisfying $(g_{*})$ and such that
\begin{equation*}
g_{0}:=\lim_{s\to 0^{+}} \dfrac{g(s)}{s} = 0
\quad \text{ and } \quad
g_{\infty}:=\liminf_{s\to +\infty} \dfrac{g(s)}{s} > 0.
\end{equation*}
If $g_{\infty}$ is sufficiently large, then there exists $\mu^{*} > 0$ such that for each $\mu > \mu^{*}$
equation
\begin{equation*}
u'' + c u' + \bigl{(} a^{+}(t) - \mu a^{-}(t) \bigr{)} g(u) = 0
\end{equation*}
has at least $2^{m}-1$ positive $T$-periodic solutions.
\end{theorem}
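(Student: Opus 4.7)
The proof will derive Theorem~\ref{th-1.1} from the general multiplicity result Theorem~\ref{MainTheorem}, whose abstract framework is the coincidence degree on open (possibly unbounded) subsets of the space $C_{T}$ of continuous $T$-periodic functions. The plan is to encode the combinatorial information \emph{``the solution is large on the positive humps indexed by $I$ and small on those outside $I$''} into a family of pairwise disjoint open sets $\Omega_{I}$, one for each nonempty $I \subseteq \{1,\ldots,m\}$. By additivity of the coincidence degree, it then suffices to show that the degree of the fixed-point operator associated with \eqref{eq-1.1} is nonzero on each of these $2^{m}-1$ sets, which will produce $2^{m}-1$ distinct positive $T$-periodic solutions.

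Concretely, fix thresholds $0 < r < R$ (to be chosen later in terms of $g_{\infty}$ and of the weight) and define
\begin{equation*}
\Omega_{I} := \bigl\{ u \in C_{T} : \, 0 < u, \; \max\nolimits_{[\sigma_{i},\tau_{i}]} u < r \; \forall\, i \notin I, \; r < \max\nolimits_{[\sigma_{i},\tau_{i}]} u < R \; \forall\, i \in I \bigr\}.
\end{equation*}
The key analytical ingredients I would establish are: \emph{(i)} a uniform a~priori upper bound $R$ on any positive $T$-periodic solution of \eqref{eq-1.1}, provided by the superlinear control at infinity coming from the largeness of $g_{\infty}$; \emph{(ii)} a classical blow-up/non-continuation lemma on each positive hump $[\sigma_{i},\tau_{i}]$ which, once $g_{\infty}$ is taken large enough, rules out solutions whose maximum on that hump lies in the forbidden gap around $r$ or near $R$; and \emph{(iii)} a decoupling estimate showing that, for $\mu$ sufficiently large, any positive solution staying below $R$ must be uniformly small at the endpoints of every negative interval $[\tau_{i},\sigma_{i+1}]$. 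Ingredient \emph{(iii)} is the one that allows the behaviour on distinct positive humps to be decorrelated and hence labelled independently by the subset $I$.

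With these estimates in hand, the coincidence degree on $\Omega_{I}$ can be computed by an admissible homotopy which switches off the weight outside the positive intervals indexed by $I$, reducing the problem to a product of one-dimensional index calculations on each hump; the hypothesis $g_{0}=0$ then forces a nonzero contribution via the standard superlinear index computation on a bounded positive interval. The main obstacle I anticipate is ingredient \emph{(iii)}: in the periodic setting one lacks Dirichlet comparison, so uniform smallness on a negative interval must be extracted directly from phase-plane or energy estimates that exploit the strict sign of $a^{-}(t)$ on $[\tau_{i},\sigma_{i+1}]$ together with the largeness of $\mu$, and it must hold \emph{uniformly} along the homotopy used to compute the degree rather than just for solutions of \eqref{eq-1.1} itself. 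Verifying this uniformity, and reconciling it with the unboundedness of the open sets $\Omega_{I}$ (for which the extended coincidence degree of Theorem~\ref{MainTheorem} is precisely tailored), is where the bulk of the technical work is concentrated.
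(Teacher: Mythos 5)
Your overall architecture --- pairwise disjoint open sets indexed by the nonempty subsets $\mathcal{I}\subseteq\{1,\dots,m\}$, a uniform a priori bound $R$ coming from the largeness of $g_{\infty}$, and a decoupling of the humps obtained by taking $\mu$ large --- is exactly the skeleton of the paper's proof of Theorem~\ref{MainTheorem} (your $\Omega_{I}$ are the sets $\Lambda^{\mathcal{I}}_{r,R}$ of \eqref{eq-2.lambda}), and Theorem~\ref{th-1.1} is then an immediate corollary, the quantitative meaning of ``$g_{\infty}$ sufficiently large'' being $g_{\infty}>\max_{i}\lambda_{1}^{i}$. The genuine gap is in the step where you compute the degree on $\Omega_{I}$. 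You propose a homotopy that ``switches off the weight outside the positive intervals indexed by $I$'' followed by a ``product of one-dimensional index calculations on each hump''. Two problems. First, any positive $T$-periodic solution satisfies $\int_{0}^{T}w(t)g(u(t))\,dt=0$; once the negative part of the weight is switched off the modified weight satisfies $\tilde w\succ 0$ and the limiting equation has no positive $T$-periodic solutions at all, so the endpoint of your homotopy carries no usable index --- and, worse, the mechanism that confines solutions below the level $r$ on the humps with $i\notin I$ is precisely the large negative weight on the adjacent intervals $I^{-}_{i}$; removing it allows solutions to cross the level $r$ during the homotopy, so admissibility on $\partial\Omega_{I}$ fails exactly where it is needed. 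Second, there is no product structure to exploit: the operator $u\mapsto -u''-cu'$ with periodic conditions has the constants in its kernel, the humps are coupled through the negative intervals, and the degree the paper actually obtains is $D_{L}(L-N,\Lambda^{\mathcal{I}})=(-1)^{\#\mathcal{I}}$ (formula \eqref{eq-deg-Lambda}), not a product of local indices.

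The paper's route around this is worth internalizing: the degree on the sets $\Lambda^{\mathcal{I}}$ is never computed directly. One shows $D_{L}(L-N,B(0,r))=1$ by Mawhin's continuation theorem along the convex homotopy $\vartheta a_{\mu}g$ (Lemma~\ref{lem-deg1-ball}; this is where $\int_{0}^{T}a_{\mu}<0$, i.e. $\mu>\mu^{\#}$, enters --- a condition absent from your sketch), and $D_{L}(L-N,\Omega^{\mathcal{I}})=0$ for every nonempty $\mathcal{I}$ by adding a forcing term $\alpha v$ supported on $\bigcup_{i\in\mathcal{I}}I^{+}_{i}$ and letting $\alpha$ grow (Lemmas~\ref{lem-deg0-ball} and~\ref{lem-deg0}); the largeness of $\mu$ is used there to verify condition $(A_{r,\mathcal{J}})$, namely that no solution of the $\alpha$-perturbed equation has maximum exactly $r$ on a hump outside $\mathcal{I}$ --- it is this, and not a blow-up lemma driven by $g_{\infty}$, that closes the ``forbidden gap'' at level $r$ (the blow-up/Sturm comparison argument is used only for the upper bound $R^{*}$). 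The values on the $\Lambda^{\mathcal{I}}$ are then extracted by the inclusion--exclusion argument of Lemma~\ref{lem-B1}, using the disjoint decomposition $\Omega^{\mathcal{I}}=\bigcup_{\mathcal{J}\subseteq\mathcal{I}}\Lambda^{\mathcal{J}}$ and additivity. If you wish to keep your homotopy, you must supply an entirely new admissibility argument; as written, that step does not go through.
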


In the statement of the theorem we have employed the usual notation $w(t) \succ 0$ in a given interval, to express the fact that
$w(t)\geq 0$ almost everywhere with $w\not\equiv 0$ on that interval. Moreover, $w(t) \prec 0$ stands for $-w(t) \succ 0$.

\begin{remark}\label{rem-1.1}
For the application of Theorem~\ref{th-1.1}, a lower bound for $g_{\infty}$ can be explicitly provided (see Theorem~\ref{MainTheorem}).
It depends only on $a^{+}(t)$ (and not on $\mu$ or on $a^{-}(t)$).
In any case, our result holds for
\begin{equation}\label{eq-1.2}
g_{0} = 0 \quad \text{ and } \quad g_{\infty} = +\infty,
\end{equation}
without any further assumption on $g(s)$ (see Corollary~\ref{cor-5.2}).
Therefore the power nonlinearity $g(s) = s^{p}$, with $p>1$, is covered.
Regarding \eqref{eq-1.2}, Theorem~\ref{th-1.1} can be seen as an extension to the
periodic case of the result by Erbe and Wang \cite{ErWa-1994}. In \cite{ErWa-1994} the
authors considered the two-point boundary value problem with a sign-definite weight
and proved the existence of at least one positive solution. In our situation, we allow the
weight coefficient to change its sign and obtain a multiplicity result.

We underline that the condition $\mu > \mu^{*}$ implies
\begin{equation*}
\int_{0}^{T} w(t) ~\!dt < 0,
\end{equation*}
which is a necessary condition for the existence of positive $T$-periodic solutions
when $g'(s) > 0$ on $\mathbb{R}^{+}_{0} := \mathopen{]}0,+\infty\mathclose{[}$ (cf.~\cite{Bo-2011,FeZa-2015ade}).

Concerning the condition at zero, we observe that it can be slightly improved to an hypothesis of the form
\begin{equation*}
\limsup_{s\to 0^{+}} \dfrac{g(s)}{s} < \lambda_{*},
\end{equation*}
where $\lambda_{*}$ is a positive constant which can be estimated from below (see Remark~\ref{rem-4.2}).

Besides the periodic problem, in the last part of our paper, we study also the Neumann boundary value problem for
radially symmetric solutions of the Laplace equation and obtain the same multiplicity results.
In this manner, we generalize \cite{Wa-1994}, where the existence of radially symmetric solutions
for the Dirichlet problem was obtained for a sign-definite weight.
$\hfill\lhd$
\end{remark}

A simple application of Theorem~\ref{th-1.1} is the following.
The equation
\begin{equation*}
u'' + c u' + \bigl{(}\sin^{+}(2t) - \mu \sin^{-}(2t)\bigr{)} u^{p} = 0, \quad p>1,
\end{equation*}
has at least three positive $2\pi$-periodic solutions if $\mu > 0$ is sufficiently large.
The same result obviously holds for
\begin{equation*}
u'' + c u' + \bigl{(}\cos^{+}(2t) - \mu \cos^{-}(2t)\bigr{)} u^{p} = 0, \quad p>1.
\end{equation*}
Moreover, we can also prove that for an equation like
\begin{equation*}
u'' + c u' + \bigl{(}\nu \sin^{+}(m t) - \mu \sin^{-}(m t)\bigr{)} \, u \arctan(u) = 0,
\end{equation*}
with $m$ a fixed positive integer, there are at least $2^m -1$ positive $2\pi$-periodic solutions provided
that $\nu > \nu^{*}$ and $\mu > \mu^{*}$ with $\mu^{*} = \mu^{*}(\nu)$ (see Corollary~\ref{cor-5.1} for more details).
In this latter example, $\nu$ large guarantees that $g_{\infty}$ (although finite) is large as well and
Theorem~\ref{th-1.1} can be applied.
A less immediate application of Theorem~\ref{th-1.1}, presented in Section~\ref{section-6},
ensures that in all the above three examples there are subharmonic solutions of an arbitrary order
and also bounded solutions which exhibit chaotic-like dynamics (in a sense that will be made more precise later).

\medskip

Our proofs make use of a topological degree argument in the sense that we define some open sets where an operator
associated with our boundary value problem has nonzero degree. Thus our results are stable with respect to small
perturbations. In this manner, for example, we can also extend Theorem~\ref{th-1.1} to equations like
\begin{equation*}
u'' + c u' + \varepsilon u + \bigl{(} a^{+}(t) - \mu a^{-}(t) \bigr{)} g(u) = 0,
\end{equation*}
with $|\varepsilon| < \varepsilon_{0}$, where $\varepsilon_{0}$ is a sufficiently small constant depending on $\mu$.
This latter equation is analogous to the one considered by Graef, Kong and Wang in \cite{GrKoWa-2008} of the form
\begin{equation*}
u'' - \rho^{2} u + \lambda w(t) g(u) = 0.
\end{equation*}
For this equation, the authors in \cite[Theorem~2.1~(a)]{GrKoWa-2008} obtained existence of positive periodic solutions,
for $\rho > 0$ (arbitrary) and $\lambda > 0$ large, when $w \succ 0$. With our result we can ensure
the multiplicity of solutions for a sign-changing weight when $\rho$ is small.

The stability with respect to small perturbations is not generally guaranteed when using different approaches, such as variational or
symplectic techniques which require some special structure (e.g.~an Hamiltonian). Concerning this aspect,
we stress that our results work equally well with respect to the presence or not of the friction term $c u'$
($c$ can be zero or nonzero, indifferently).

The technique we employ in this paper exploits and combines the approaches introduced in our recent works \cite{FeZa-2015ade,FeZa-2015jde}.
In \cite{FeZa-2015jde} we provide existence and multiplicity of positive solutions to a Dirichlet boundary value problem of the form
\begin{equation}\label{eq-1.3}
\begin{cases}
\, u'' + f(t,u) = 0 \\
\, u(0) = u(T) = 0,
\end{cases}
\end{equation}
using an extension of the classical Leray-Schauder degree to open and possibly unbounded sets (cf.~\cite{Nu-1985,Nu-1993}).
More in detail, since $u\mapsto -u''$ (with the Dirichlet boundary conditions) is invertible, following a standard procedure,
we write \eqref{eq-1.3} as an equivalent fixed point problem in a Banach space. Then we apply some degree theoretical arguments
on suitably chosen open sets which are carefully selected in order to discriminate between the solutions we are looking for.
With respect to the periodic (and Neumann) problem associated with \eqref{eq-1.1}, the linear differential operator $u\mapsto -u''-cu'$
has a nontrivial kernel made by constant functions, therefore it is not invertible and we can not operate as in \cite{FeZa-2015jde}.
For this reason, as in \cite{FeZa-2015ade}, we use the coincidence degree theory introduced by J.~Mawhin to deal with
a problem of the form $Lu = Nu$, where $L$ is a linear operator with nontrivial kernel and $N$ is a nonlinear one.
Unlike \cite{FeZa-2015ade}, where we prove the existence of at least a positive $T$-periodic solution to \eqref{eq-1.1},
in the present situation we need an extension of the coincidence degree to open not necessarily bounded sets (see Appendix~\ref{appendix-A}),
in order to take advantage of the degree argument proposed in \cite{FeZa-2015jde}. The possibility of obtaining the
existence of solutions in some special open sets (via coincidence degree theory) allows us also to prove the
presence of \textit{positive subharmonic solutions}, a fact which appears rather new in this framework.

\medskip

In the last twenty years the nonlinear indefinite problems received considerable attention,
especially in connection to the study of existence and multiplicity of solutions to boundary value problems
associated with equations of the form
\begin{equation}\label{eq-1.4}
-\Delta u = w(x)g(u).
\end{equation}
Equations of this type arise in many models concerning population dynamics, differential geometry and mathematical physics.

The study of superlinear ODEs with a sign-changing weight was initiated in 1965 by Waltman (cf.~\cite{Wa-1965}),
considering oscillatory solutions for
\begin{equation*}
u'' + w(t) u^{2n+1} = 0, \quad n\geq1.
\end{equation*}
Among the several authors who have continued this line of research, for the periodic problem,
we recall the relevant contributions of Butler \cite{Bu-1976}, Terracini and Verzini \cite{TeVe-2000}, and Papini \cite{Pa-2003},
who proved the existence of periodic solutions with a large number of zeros to superlinear indefinite equations of the form
\begin{equation}\label{eq-1.5}
u'' + w(t) g(u) = 0.
\end{equation}
The presence of chaotic dynamics for superlinear indefinite ODEs was discovered in \cite{TeVe-2000} in the study of
\begin{equation*}
u''+ K u + w(t) u^{3} = h(t)
\end{equation*}
(with the constant $K$ and the function $h(t)$ possibly equal to zero). In this framework we also mention
\cite{CaDaPa-2002}, where a more general case of \eqref{eq-1.5}, adding the friction term $c u'$, has been considered
(see \cite[\S~1]{PaZa-2004} for a brief historical survey about this subject). A typical feature of these results
lies on the fact that the solutions which have been obtained therein have a large number of zeros in the intervals where $w(t)>0$.
This fact was explicitly observed also by Butler in the proof of \cite[Corollary]{Bu-1976}, where the author pointed out that the equation
\begin{equation*}
u'' + w(t) |u|^{p-1} u = 0, \quad p > 1,
\end{equation*}
has infinitely many $T$-periodic solutions, assuming that $w(t)$ is a continuous $T$-periodic function with only
isolated zeros and which is somewhere positive. It was also noted that all these
solutions oscillate (have arbitrarily large zeros) if $\int_{0}^{T} w(t) ~\!dt\geq 0$.
Since condition
\begin{equation}\label{eq-1.6}
\int_{0}^{T} w(t) ~\!dt < 0
\end{equation}
implies the existence of non-oscillatory solutions (cf.~\cite{Bu-1976a}), it was raised the question (see \cite[p.~477]{Bu-1976}) whether
\textit{there can exist non-oscillatory periodic solutions} if \eqref{eq-1.6} holds. In the recent paper \cite{FeZa-2015ade}
we have provided a solution to Butler's question, by showing the \textit{existence} of positive (i.e.~non-oscillatory)
$T$-periodic solutions under the average condition \eqref{eq-1.6}. In this context, Theorem~\ref{th-1.1} can be seen
as a further investigation about Butler's problem, in the sense that we give evidence of the fact that if
the weight is negative enough, multiple positive periodic solutions appear (depending on the number
of positive humps in the weight function which are separated by negative ones).

In many applications, the solutions of \eqref{eq-1.4} represent the steady states associated with reaction-diffusion equations.
With this respect, a great deal of importance was given to existence and multiplicity results of positive solutions
in the superlinear indefinite case (cf.~\cite{AlTa-1996,BeCaDoNi-1994,BeCaDoNi-1995}).

In \cite{GaHaZa-2003}, a shooting method was applied to prove the existence of at least three
positive solutions for the two-point boundary value problem associated with
\begin{equation}\label{eq-1.7}
u'' + a_{\mu}(t) u^{p} = 0, \quad p>1,
\end{equation}
when $a_{\mu}(t)$ has two positive humps separated by a negative one and $\mu > 0$ is sufficiently large.
The same multiplicity result has been obtained by Boscaggin in \cite{Bo-2011} for the Neumann problem.
One of the contributions of the present paper is also that of showing the possibility of multiple positive solutions
without the assumption of superlinear growth at infinity, provided that $g_{\infty}$ is large enough.
Figure~\ref{fig-01} shows a possible example in this direction.

\begin{figure}[h!]
\centering
\includegraphics[width=0.45\textwidth]{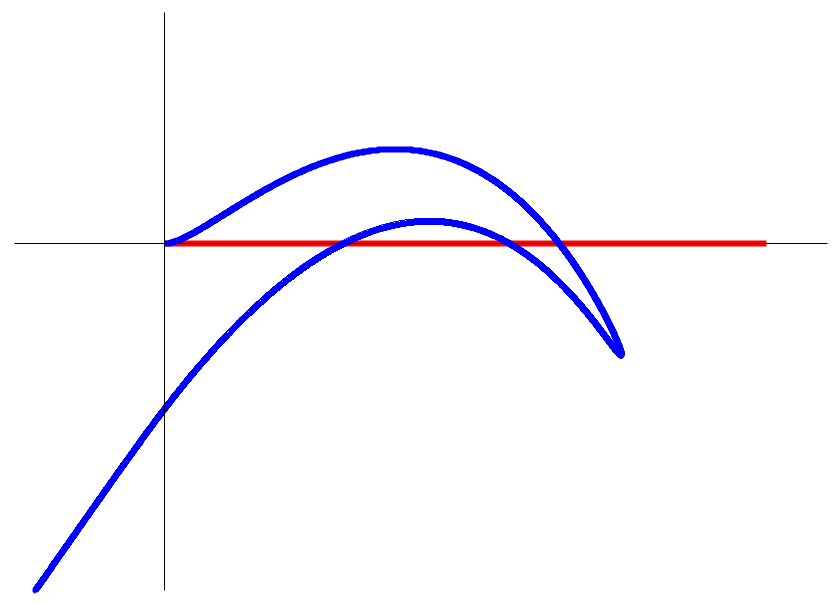}\qquad
\includegraphics[width=0.45\textwidth]{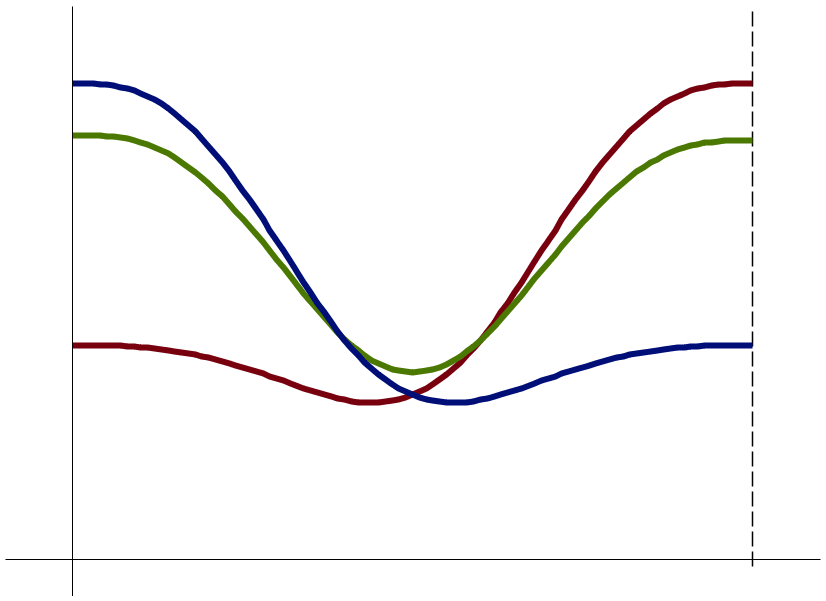}
\caption{\small{An example of three positive solutions for equation $u'' + w(t) g(u) = 0$
with Neumann boundary conditions on $\mathopen{[}0,1\mathclose{]}$.
For this numerical simulation we have chosen $w(t) = \sin^{+}(3\pi t) - 7\,\sin^{-}(3\pi t)$
and $g(s) = \max\{0,100\, s\arctan|s|\}$.
On the left we have shown the image of the segment $\mathopen{[}0,0.2\mathclose{]}\times\{0\}$
through the Poincar\'{e} map in the phase-plane $(u,u')$.
It intersects the positive part of the $u$-axis in three points.
On the right hand side of the figure, we see the graphs of the three positive solutions.}}
\label{fig-01}
\end{figure}

This kind of results lies on a line of research initiated by G\'{o}mez-Re\~{n}asco and L\'{o}pez-G\'{o}mez in \cite{GoReLoGo-2000},
where the authors gave evidence of the fact that (for the Dirichlet problem) at least $2^{m}-1$ positive solutions (for $\mu$ large) appear when
$a_{\mu}(t)$ has $m$ positive humps separated by $m-1$ negative ones.
For the Dirichlet problem, contributions in this direction have been achieved in \cite{BoGoHa-2005, FeZa-2015jde, GaHaZa-2004, GiGo-2009}.
At the best of our knowledge, the first work addressing the same questions in the periodic setting is due to
Barutello, Boscaggin and Verzini, who in \cite{BaBoVe-jde2015}, using a variational approach,
achieved multiplicity of positive periodic solutions for \eqref{eq-1.7}.
In \cite{BaBoVe-jde2015} globally bounded solutions defined on the real line and exhibiting a chaotic behavior were also produced.

\medskip

The plan of the paper is as follows.
In Section~\ref{section-2} we list the hypotheses on $a(t)$ and on $g(s)$ that we assume for the rest of the paper and we introduce an useful notation.
Section~\ref{section-3} is devoted to the application of coincidence degree theory to our problem. More in detail, we define an equivalent operator problem
and we present three technical lemmas essential for the computation of the degree in the proof of our main result (Theorem~\ref{MainTheorem}),
which is stated and proved in Section~\ref{section-4}.
In Section~\ref{section-5} we present various consequences and applications of the main theorem, as Theorem~\ref{th-1.1} and a nonexistence result (cf.~Corollary~\ref{cor-5.5}).
In Section~\ref{section-6} we deal with subharmonic solutions. In Theorem~\ref{th-6.1} we prove the existence of infinitely many subharmonic
solutions for \eqref{eq-1.1} if the negative part of the weight is sufficiently strong (i.e.~when $\mu > 0$ is large enough).
This result follows from Theorem~\ref{th-1.1} applied to an interval of the form $\mathopen{[}0,kT\mathclose{]}$ and a careful verification that
the constants needed for the proof are independent on $k$. In the same section we also discuss the number of
subharmonics of a given order and we sketch how to produce bounded solutions on the real line which are not necessarily periodic.
Even if we focus our main attention to the study of the periodic problem, in Section~\ref{section-7} we observe that
variants of our main results can be given for the Neumann problem.
Therein we also provide an application to radially symmetric solutions of PDEs on annular domains.
Finally, in Appendix~\ref{appendix-A} we discuss some basic facts about the coincidence degree
defined in open and possibly unbounded sets and we state some lemmas for the computation of the degree,
while in Appendix~\ref{appendix-B} we present a combinatorial lemma which is of crucial importance
both for the computation of the coincidence degree
and for our main multiplicity results.

\section{Setting and notation}\label{section-2}

In this section we present the main elements involved in the study of the positive $T$-periodic solutions of the equation
\begin{equation}\label{eq-2.1}
u'' + c u' + \bigl{(} a^{+}(t) - \mu a^{-}(t)\bigr{)} g(u) = 0.
\end{equation}
For $\mu >0$, we set
\begin{equation*}
a_{\mu}(t) := a^{+}(t) - \mu \, a^{-}(t).
\end{equation*}
The hypotheses that will follow will be assumed from now on in the paper.

Let $g\colon{\mathbb{R}}^{+}\to{\mathbb{R}}^{+}$ be a continuous function such that
\begin{equation*}
g(0) = 0, \qquad g(s) > 0 \quad \text{for } \; s > 0.
\leqno{\hspace*{2.2pt}(g_{*})}
\end{equation*}
Suppose also that
\begin{equation*}
g_{0} := \limsup_{s\to 0^{+}} \dfrac{g(s)}{s} < +\infty
\quad \text{ and } \quad
g_{\infty} := \liminf_{s\to +\infty} \dfrac{g(s)}{s} > 0.
\leqno{\hspace*{2.2pt}(g_{1})}
\end{equation*}

The weight coefficient $a\colon{\mathbb{R}}\to{\mathbb{R}}$ is a locally integrable $T$-periodic function such that,
in a time-interval of length $T$, there exists a finite number of closed pairwise disjoint intervals where $a(t) \succ 0$,
separated by closed intervals where $a(t) \prec 0$.
In this case, thanks to the periodicity of $a(t)$, we can suitably choose an interval $\mathopen{[}t_{0},t_{0} + T\mathclose{]}$,
which we identify with $\mathopen{[}0,T\mathclose{]}$ for notational convenience, such that the following condition $(a_{*})$ holds.

\begin{itemize}
\item[$(a_{*})$]
\textit{There exist $m \geq 2$ closed and pairwise disjoint intervals $I^{+}_{1},\ldots,I^{+}_{m}$
separated by $m$ closed intervals $I^{-}_{1},\ldots,I^{-}_{m}$ such that
\begin{equation*}
a(t)\succ 0 \; \text{ on } I^{+}_{i}, \qquad a(t)\prec 0 \; \text{ on } I^{-}_{i},
\end{equation*}
and, moreover,
\begin{equation*}
\bigcup_{i=1}^{m} I^{+}_{i} \, \cup \, \bigcup_{i=1}^{m} I^{-}_{i} = \mathopen{[}0,T\mathclose{]}.
\end{equation*}}
\end{itemize}

To explain this fact with an example, suppose that we take $a(t) = \cos (2t)$ as a $2\pi$-periodic function.
In this case, on $\mathopen{[}0,2\pi\mathclose{]}$ we have three positive humps and two negative ones.
However, in order to enter in the setting of $(a_{*})$ and hence to look at the weight as a function
with two positive humps separated by two negative ones in a time-interval of length $2\pi$,
we can choose $\mathopen{[}t_{0},t_{0}+2\pi\mathclose{]}$, for $t_{0} =3\pi/4$, as interval of periodicity.
When, for convenience in the exposition, we say that we work with the standard period interval
$\mathopen{[}0,2\pi\mathclose{]}$, we are in fact considering a shift of $t_{0}$
of the weight function, e.g.~taking $\cos(2t + 2t_{0})$ as effective coefficient.
Clearly, this does not affect our considerations as long as we are interested in $2\pi$-periodic solutions.
In the same example, let us fix an integer $k\geq 2$ and consider the coefficient $\cos(2t + 2t_{0})=\sin(2t)$ as a
$2k\pi$-periodic function. In the period interval $\mathopen{[}0,2k\pi\mathclose{]}$ the weight has $m = 2k$ intervals of positivity
separated by $2k$ intervals of negativity. We will consider again a similar example dealing with subharmonic solutions.

In the sequel, it will be not restrictive to label the intervals $I^{+}_{i}$ and $I^{-}_{i}$ following the natural order given by the
standard orientation of the real line and thus determine $2m + 1$ points
\begin{equation*}
0 = \sigma_{1} < \tau_{1} < \sigma_{2} < \tau_{2} < \ldots < \sigma_{m-1} < \tau_{m-1} < \sigma_{m} < \tau_{m} < \sigma_{m+1} = T,
\end{equation*}
so that, for $i=1,\ldots,m$,
\begin{equation*}
I^{+}_{i} := \mathopen{[}\sigma_{i},\tau_{i}\mathclose{]} \quad \text{ and } \quad
I^{-}_{i} := \mathopen{[}\tau_{i},\sigma_{i+1}\mathclose{]}.
\end{equation*}
Finally, consistently with assumption $(a_{*})$ and without loss of generality, we select the points $\sigma_{i}$
and $\tau_{i}$ in such a manner that $a(t)\not\equiv0$ on all left neighborhoods of
$\sigma_{i}$ (for $i > 1$) and on all right neighborhoods of $\tau_{i}$.
In other words, if there is an interval $J$ contained in $\mathopen{[}0,T\mathclose{]}$ where $a(t)\equiv 0$,
we choose the points $\sigma_{i}$ and $\tau_{i}$ so that $J$ is contained in one of the $I^{+}_{i}$
or $J$ is contained in the interior of one of the $I^{-}_{i}$.

We denote by $\lambda_{1}^{i}$, $i=1,\ldots,m$, the first eigenvalue of the eigenvalue problem in $I^{+}_{i}$
\begin{equation*}
\varphi'' + c \varphi' + \lambda a(t) \varphi = 0, \quad \varphi|_{\partial I^{+}_{i}} = 0.
\end{equation*}
From the assumptions on $a(t)$ in $I^{+}_{i}$, it clearly follows that $\lambda_{1}^{i} > 0$ for each $i=1,\ldots,m$.

We introduce some other useful notations.
Let $\mathcal{I}\subseteq\{1,\ldots,m\}$ be a subset of indices (possibly empty) and let
$d,D$ be two fixed positive real numbers with $d<D$.
We define two families of open unbounded sets
\begin{equation}\label{eq-2.omega}
\begin{aligned}
\Omega^{\mathcal{I}}_{d,D}:=
\biggl\{\,u\in\mathcal{C}(\mathopen{[}0,T\mathclose{]})\colon
   & \max_{t\in I^{+}_{i}}|u(t)|<D, \, i\in\mathcal{I};                             &
\\ & \max_{t\in I^{+}_{i}}|u(t)|<d, \, i\in\{1,\ldots,m\}\setminus\mathcal{I}       & \biggr\}
\end{aligned}
\end{equation}
and
\begin{equation}\label{eq-2.lambda}
\begin{aligned}
\Lambda^{\mathcal{I}}_{d,D}:=
\biggl\{\,u\in\mathcal{C}(\mathopen{[}0,T\mathclose{]})\colon
   & d < \max_{t\in I^{+}_{i}}|u(t)|<D, \, i\in\mathcal{I};                           &
\\ & \max_{t\in I^{+}_{i}}|u(t)|<d, \, i\in\{1,\ldots,m\}\setminus\mathcal{I}     & \biggr\}.
\end{aligned}
\end{equation}
We note that, for each $\mathcal{I}\subseteq \{1,\ldots,m\}$, we have
\begin{equation*}
\Omega^{\mathcal{I}}_{d,D}=\bigcup_{\mathcal{J}\subseteq\mathcal{I}}\Lambda^{\mathcal{J}}_{d,D},
\end{equation*}
and the union is disjoint, since $\Lambda^{\mathcal{J}'}_{d,D} \cap \Lambda^{\mathcal{J}''}_{d,D} = \emptyset$, for $\mathcal{J}'\neq\mathcal{J}''$.

In the sequel, once the constants $d$ and $D$ are fixed, we simply use the symbols $\Omega^{\mathcal{I}}$ and $\Lambda^{\mathcal{I}}$
to denote $\Omega^{\mathcal{I}}_{d,D}$ and $\Lambda^{\mathcal{I}}_{d,D}$, respectively.

\section{The abstract setting of the coincidence degree}\label{section-3}

In this section we apply the coincidence degree theory, that we will present in Appendix~\ref{appendix-A}, to study the periodic problem
associated with equation \eqref{eq-2.1}. We follow the same approach presented in \cite{Ma-1979}.

Let $X:= \mathcal{C}(\mathopen{[}0,T\mathclose{]})$ be the Banach space of continuous functions $u \colon \mathopen{[}0,T\mathclose{]} \to \mathbb{R}$,
endowed with the $\sup$-norm
\begin{equation*}
\|u\|_{\infty} := \max_{t\in \mathopen{[}0,T\mathclose{]}} |u(t)|,
\end{equation*}
and let $Z:=L^{1}(\mathopen{[}0,T\mathclose{]})$ be the space of integrable functions $w \colon \mathopen{[}0,T\mathclose{]} \to \mathbb{R}$,
endowed with the norm
\begin{equation*}
\|w\|_{L^{1}}:= \int_{0}^{T} |w(t)|~\!dt.
\end{equation*}
We consider the linear differential operator $L\colon \text{\rm dom}\,L \to Z$ defined as
\begin{equation*}
(Lu)(t):= - u''(t) - cu'(t), \quad t\in\mathopen{[}0,T\mathclose{]},
\end{equation*}
where $\text{\rm dom}\,L$ is determined by the functions of $X$
which are continuously differentiable with absolutely continuous derivative
and satisfying the periodic boundary condition
\begin{equation}\label{per-cond}
u(0) = u(T), \qquad u'(0) = u'(T).
\end{equation}
Therefore, $L$ is a Fredholm map of index zero, $\ker L$ and $\text{\rm coker}\,L$ are made by the constant functions and
\begin{equation*}
\text{\rm Im}\,L = \biggl\{ w\in Z \colon \int_{0}^{T} w(t)~\!dt = 0 \biggr\}.
\end{equation*}
As projectors $P \colon X \to \ker L$ and $Q \colon Z \to \text{\rm coker}\,L$ associated with $L$ we choose the average operators
\begin{equation*}
Pu = Qu := \dfrac{1}{T}\int_{0}^{T} u(t)~\!dt.
\end{equation*}
Notice that $\ker P$ is given by the continuous functions with mean value zero.
Finally, let $K_{P} \colon \text{\rm Im}\,L \to \text{\rm dom}\,L \cap \ker P$ be the right inverse of $L$,
which is the operator that at any function $w\in Z$ with $\int_{0}^{T} w(t)~\!dt =0$ associates the unique solution $u$ of
\begin{equation*}
u'' + c u' + w(t) =0, \quad \text{ with } \; \int_{0}^{T} u(t)~\!dt = 0,
\end{equation*}
and satisfying the boundary condition \eqref{per-cond}.

Thereafter, on $\mathbb{R}^{2}$ we define the $L^{1}$-Carath\'{e}odory function
\begin{equation*}
\tilde{f}(t,s) :=
\begin{cases}
\, a_{\mu}(t)g(s), & \text{if } s \geq 0;\\
\, -s,  & \text{if } s \leq 0;
\end{cases}
\end{equation*}
and observe that $\tilde{f}(t + T,s) = \tilde{f}(t,s)$ for a.e.~$t\in {\mathbb{R}}$ and for all $s\in\mathbb{R}$.
Let $N \colon X \to Z$ be the Nemytskii operator induced by $\tilde{f}$, that is
\begin{equation*}
(N u)(t):= \tilde{f}(t,u(t)), \quad t\in\mathopen{[}0,T\mathclose{]}.
\end{equation*}
According to the above positions, if $u$ is a $T$-periodic solution of
\begin{equation}\label{eq-3.tilde}
u'' + c u' + \tilde{f}(t,u) = 0,
\end{equation}
then $u|_{\mathopen{[}0,T\mathclose{]}}$ is a solution of the coincidence equation
\begin{equation}\label{eq-coincidence-eq}
Lu = Nu,\quad u\in \text{\rm dom}\,L.
\end{equation}
Conversely, any solution $u$ of \eqref{eq-coincidence-eq} can be extended by $T$-periodicity to a $T$-periodic solution
of \eqref{eq-3.tilde}. Moreover, from the definition of $\tilde{f}$ and conditions $(g_{*})$ and $(g_{1})$,
one can easily verify by a maximum principle argument (cf.~\cite[Lemma~6.1]{FeZa-2015ade})
that if $u\not\equiv0$ is a solution of \eqref{eq-coincidence-eq}, then $u(t)$
is strictly positive and hence a positive $T$-periodic solution of \eqref{eq-2.1} (once extended by $T$-periodicity to the whole real line).

As remarked in Appendix~\ref{appendix-A}, the operator equation \eqref{eq-coincidence-eq} is equivalent to the fixed point problem
\begin{equation*}
u = \Phi u:= Pu + QN u + K_{P}(Id-Q)N u, \quad u \in X,
\end{equation*}
where we have chosen the identity on $\mathbb{R}$ as linear orientation-preserving
isomorphism $J$ from $\text{\rm coker}\,L$ to $\ker L$ (both identified with $\mathbb{R}$).

\medskip

Now we are interested in computing the coincidence degree of $L$ and $N$ in some open domains.
For this purpose, we will consider some modifications of \eqref{eq-3.tilde} which correspond to operator equations of the
form \eqref{eq-coincidence-eq} for the associated Nemytskii operators $N$.
In the sequel we will also identify the $T$-periodic solutions with solutions defined on $\mathopen{[}0,T\mathclose{]}$ and satisfying the boundary condition
\eqref{per-cond}. We also denote by $L^{1}_{T}$ the space of locally integrable and $T$-periodic functions $w \colon {\mathbb{R}} \to {\mathbb{R}}$
(which can be identified with $Z$).

The subsequent two lemmas are direct applications of Lemma~\ref{lemma_Mawhin} and Lemma~\ref{lem-abs-deg0}, respectively.
They give conditions for the computation of the degree on some open balls.
The standard proofs are omitted (see \cite[Lemma~2.2]{BoFeZa-pp2015} and \cite[Theorem~2.1]{FeZa-2015ade} for the details).

\begin{lemma}\label{lem-deg1-ball}
Let $\mu > 0$ be such that $\int_{0}^{T} a_{\mu}(t)~\!dt < 0$.
Assume that there exists a constant $d > 0$ such that the following property holds.
\begin{itemize}
\item[$(H_{d})$]
If $\vartheta\in \mathopen{]}0,1\mathclose{]}$ and $u(t)$ is any non-negative $T$-periodic solution of
\begin{equation}\label{BVP-3.4}
u'' + c u' + \vartheta a_{\mu}(t) g(u) = 0, \\
\end{equation}
then $\,\max_{t\in \mathopen{[}0,T\mathclose{]}} u(t) \neq d$.
\end{itemize}
Then
\begin{equation*}
D_{L}(L-N,B(0,d)) = 1.
\end{equation*}
\end{lemma}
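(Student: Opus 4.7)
The plan is to invoke the continuation-type Lemma~\ref{lemma_Mawhin} from Appendix~\ref{appendix-A} with the canonical Mawhin homotopy $Lu=\vartheta Nu$, $\vartheta\in\mathopen{[}0,1\mathclose{]}$, and then compute the resulting Brouwer degree on the one-dimensional kernel of $L$. Concretely, Lemma~\ref{lemma_Mawhin} will reduce $D_{L}(L-N,B(0,d))$ to the Brouwer degree of $-QN|_{\ker L}$ on $B(0,d)\cap\ker L$ at the origin, provided that (a) the boundary $\partial B(0,d)$ carries no solution of $Lu=\vartheta Nu$ for any $\vartheta\in\mathopen{]}0,1\mathclose{]}$, and (b) $QN$ does not vanish on $\partial B(0,d)\cap\ker L=\{-d,d\}$.

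To verify (a), I would first show that every $T$-periodic solution $u$ of $u''+cu'+\vartheta\tilde f(t,u)=0$, with $\vartheta\in\mathopen{]}0,1\mathclose{]}$, is non-negative. This is a one-line maximum-principle argument: if $u$ attained a negative minimum at some $t_{0}$, then $u'(t_{0})=0$ and $u''(t_{0})\geq 0$; on the other hand, the equation together with the definition $\tilde f(t,s)=-s$ for $s\leq 0$ would give $u''(t_{0})=-\vartheta\tilde f(t_{0},u(t_{0}))=\vartheta u(t_{0})<0$, a contradiction (the friction term drops out at $t_{0}$). Hence any solution lying on $\partial B(0,d)$ is automatically a non-negative $T$-periodic solution of \eqref{BVP-3.4} with $\|u\|_{\infty}=d$, which is precisely what hypothesis $(H_{d})$ rules out.

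For (b) and the final degree computation, I would evaluate the projected nonlinearity on constants $u\equiv s$ with $s\in\mathopen{[}-d,d\mathclose{]}$:
\begin{equation*}
QN(s)=\frac{1}{T}\int_{0}^{T}\tilde f(t,s)~\!dt=
\begin{cases}
\, g(s)\,\bar a_{\mu}, & s\geq 0, \\
\, -s, & s\leq 0,
\end{cases}
\end{equation*}
where $\bar a_{\mu}:=\frac{1}{T}\int_{0}^{T}a_{\mu}(t)~\!dt<0$ by the standing assumption. Since $g(d)>0$ by $(g_{*})$, we obtain $QN(d)=g(d)\,\bar a_{\mu}<0$ and $QN(-d)=d>0$, which secures (b); moreover the map $s\mapsto -QN(s)$ is strictly negative at $s=-d$ and strictly positive at $s=d$, so its one-dimensional Brouwer degree on $\mathopen{]}-d,d\mathclose{[}$ at $0$ equals $+1$. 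Lemma~\ref{lemma_Mawhin} then yields $D_{L}(L-N,B(0,d))=1$. The only delicate point is the sign control in (a), but the artificial definition of $\tilde f$ on the negative halfline is tailor-made for precisely this maximum-principle step, so no further obstacle arises.
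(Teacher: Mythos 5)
Your proposal is correct and follows exactly the route the paper indicates: Lemma~\ref{lem-deg1-ball} is presented there as a direct application of Lemma~\ref{lemma_Mawhin}, with the boundary condition secured by the maximum-principle positivity of solutions of $Lu=\vartheta Nu$ (exploiting $\tilde f(t,s)=-s$ for $s\leq 0$) together with $(H_{d})$, and the Brouwer degree on $\ker L\cong\mathbb{R}$ computed from the sign change of $s\mapsto -QN(s)$ forced by $\int_{0}^{T}a_{\mu}(t)\,dt<0$. All steps, including the values $QN(d)=g(d)\,\bar a_{\mu}<0$ and $QN(-d)=d>0$ and the resulting degree $+1$, match the argument the authors refer to in \cite{FeZa-2015ade}.
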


\begin{lemma}\label{lem-deg0-ball}
Assume that there exists a constant $D > 0$ such that the following property holds.
\begin{itemize}
\item [$(H^{D})$]
There exist a non-negative function $v\in L_{T}^{1}$ with $v\not\equiv 0$
and a constant $\alpha_{0} > 0$, such that every $T$-periodic solution $u(t)\geq0$ of the boundary value problem
\begin{equation}\label{BVP-3.5}
u'' + c u' + a_{\mu}(t) g(u) + \alpha v(t) = 0,
\end{equation}
for $\alpha \in \mathopen{[}0,\alpha_{0}\mathclose{]}$, satisfies $\|u\|_{\infty} \neq D$.
Moreover, there are no solutions $u(t)$ of \eqref{BVP-3.5} for $\alpha = \alpha_{0}$ with $0 \leq u(t) \leq D$,
for all $t\in \mathbb{R}$.
\end{itemize}
Then
\begin{equation*}
D_{L}(L-N,B(0,D)) = 0.
\end{equation*}
\end{lemma}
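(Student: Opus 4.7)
The plan is to run a homotopy argument inside the coincidence degree framework, with parameter $\alpha \in \mathopen{[}0,\alpha_{0}\mathclose{]}$ tracking the family \eqref{BVP-3.5}. A $T$-periodic solution of \eqref{BVP-3.5} corresponds to a solution of the coincidence equation $Lu = Nu + \alpha v$, where $L$ and $N$ are exactly the operators set up earlier in Section~\ref{section-3} and $v$ is viewed as an element of $Z$. The $L$-compactness of $u \mapsto Nu + \alpha v$ on $\overline{B(0,D)}$ is automatic from the $L^{1}$-Carath\'{e}odory character of $\tilde{f}$ and the compactness of $K_{P}$, so homotopy invariance of the coincidence degree will apply once two admissibility checks are verified.

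The crucial preliminary step is a maximum principle reduction: every $T$-periodic solution of \eqref{BVP-3.5} is non-negative. Indeed, if $u$ attained a negative minimum at some $t_{*}$, then $u'(t_{*}) = 0$ and $u''(t_{*}) \geq 0$ (in the Carath\'{e}odory sense), while by construction $\tilde{f}(t_{*},u(t_{*})) = -u(t_{*}) > 0$ and $\alpha v(t_{*}) \geq 0$; this is incompatible with $u''(t_{*}) + c u'(t_{*}) + \tilde{f}(t_{*},u(t_{*})) + \alpha v(t_{*}) = 0$. This is exactly the argument already used in the paper to pass from \eqref{eq-coincidence-eq} to positive solutions (cf.~\cite[Lemma~6.1]{FeZa-2015ade}), the only new feature being the non-negative perturbation $\alpha v$, which preserves the sign structure.

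With this reduction in hand, the first part of $(H^{D})$ directly rules out solutions of $Lu = Nu + \alpha v$ with $\|u\|_{\infty} = D$ for any $\alpha \in \mathopen{[}0,\alpha_{0}\mathclose{]}$, since any such solution is non-negative. Homotopy invariance of the coincidence degree on the bounded open set $B(0,D)$ therefore yields
\begin{equation*}
D_{L}(L-N,B(0,D)) = D_{L}(L-N-\alpha_{0}v,B(0,D)).
\end{equation*}
At $\alpha = \alpha_{0}$ the second part of $(H^{D})$ says that no non-negative solution of \eqref{BVP-3.5} satisfies $0 \leq u(t) \leq D$ on $\mathbb{R}$; combined with the non-negativity reduction this means \eqref{BVP-3.5} has no solution whatsoever inside $\overline{B(0,D)}$ at $\alpha = \alpha_{0}$. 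The existence property of the coincidence degree then forces the right-hand side to vanish, which is the desired conclusion. This is precisely the content that Lemma~\ref{lem-abs-deg0} of Appendix~\ref{appendix-A} is designed to package.

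The main obstacle I anticipate is the non-negativity reduction itself: hypothesis $(H^{D})$ constrains only solutions $u \geq 0$, whereas the homotopy parameterised by $\alpha$ could a priori produce sign-changing solutions touching $\partial B(0,D)$, which would spoil admissibility. The extension of $\tilde{f}$ by $-s$ on the negative half-line, together with the sign condition $v \geq 0$, is precisely what enables the maximum-principle step to rule this out; once that step is in place, the remainder of the argument is a routine application of homotopy invariance plus the vanishing of the degree on sets containing no solutions.
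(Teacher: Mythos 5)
Your proof is correct and follows essentially the same route as the paper, which presents this lemma as a direct application of Lemma~\ref{lem-abs-deg0} (omitting the standard details): the non-negativity reduction via the maximum principle for the modified nonlinearity $\tilde{f}$, followed by homotopy admissibility on $\partial B(0,D)$ and the vanishing of the degree at $\alpha=\alpha_{0}$. The only cosmetic caveat is that for Carath\'{e}odory solutions the pointwise second-derivative test at a negative minimum should be replaced by the monotonicity of $t\mapsto e^{ct}u'(t)$ on an interval of negativity, exactly as in Remark~\ref{rem-4.4}.
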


\medskip

In order to achieve our multiplicity result, in Section~\ref{section-4} we will fix $d,D>0$ satisfying $(H_{d})$ and $(H^{D})$, respectively,
and compute the coincidence degree in the open and unbounded sets $\Lambda^{\mathcal{I}}_{d,D}$, for $\mathcal{I}\subseteq\{1,\ldots,m\}$.
To this aim the following lemma is of utmost importance (see \cite[Lemma~2.1]{BoFeZa-pp2015} for a similar statement). In the next result
we consider again equation \eqref{BVP-3.5} of the previous lemma.

\begin{lemma}\label{lem-deg0}
Let $\mathcal{J}\subseteq\{1,\ldots,m\}$ be a nonempty subset of indices, let $d > 0$ be a constant
and $v\in L_{T}^{1}$ a non-negative nontrivial function, such that the following properties hold.
\begin{itemize}
\item[$(A_{d,\mathcal{J}})$]
If $\alpha \geq 0$, then any non-negative $T$-periodic solution $u(t)$ of \eqref{BVP-3.5}
satisfies $\,\max_{t\in I^{+}_{j}} u(t) \neq d$, for all $j\in\mathcal{J}$.
\item[$(B_{d,\mathcal{J}})$]
For every $\beta \geq 0$ there exists a constant $D_{\beta} > d$ such that if $\alpha\in \mathopen{[}0,\beta\mathclose{]}$ and
$u(t)$ is any non-negative $T$-periodic solution of \eqref{BVP-3.5} with $\,\max_{t\in I^{+}_{j}} u(t) \leq d$, for all $j\in\mathcal{J}$,
then $\,\max_{t\in \mathopen{[}0,T\mathclose{]}} u(t) \leq D_{\beta}$.
\item[$(C_{d,\mathcal{J}})$]
There exists $\alpha_{0} > 0$ such that equation \eqref{BVP-3.5}, with $\alpha=\alpha_{0}$, does not possess any
non-negative $T$-periodic solution $u(t)$ with $\,\max_{t\in I^{+}_{j}} u(t) \leq d$, for all $j\in\mathcal{J}$.
\end{itemize}
Then
\begin{equation*}
D_{L}(L-N,\Gamma_{d,\mathcal{J}}) = 0,
\end{equation*}
where
\begin{equation}\label{eq-3.gamma}
\Gamma_{d,\mathcal{J}}:=
\biggl\{\,u\in\mathcal{C}(\mathopen{[}0,T\mathclose{]}) \colon \max_{t\in I^{+}_{j}}|u(t)| < d, \, j\in\mathcal{J} \biggr\}.
\end{equation}
\end{lemma}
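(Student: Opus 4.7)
The plan is a coincidence-degree homotopy in the forcing parameter $\alpha$ of \eqref{BVP-3.5}: I would deform $L-N$ through the one-parameter family $\{L-N_{\alpha}\}_{\alpha\in[0,\alpha_{0}]}$, where $N_{\alpha}\colon X\to Z$ denotes the Nemytskii operator induced by $\tilde{f}(t,s) + \alpha v(t)$, and use $(C_{d,\mathcal{J}})$ to read off degree zero at the endpoint $\alpha=\alpha_{0}$. Since $v\geq 0$ and $\tilde{f}(t,s) = -s$ for $s\leq 0$, the maximum-principle argument of \cite[Lemma~6.1]{FeZa-2015ade} still applies verbatim, so every solution of $Lu = N_{\alpha}u$ is non-negative; these solutions are precisely the non-negative $T$-periodic solutions of \eqref{BVP-3.5} discussed in the hypotheses, and the condition $\max_{t\in I^{+}_{j}}|u(t)| < d$ defining $\Gamma_{d,\mathcal{J}}$ reduces to $\max_{t\in I^{+}_{j}}u(t)<d$.

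Concretely, I would first apply $(B_{d,\mathcal{J}})$ with $\beta:=\alpha_{0}$ to obtain a constant $D_{\alpha_{0}}>d$ such that every non-negative solution of $Lu=N_{\alpha}u$ lying in $\overline{\Gamma_{d,\mathcal{J}}}$ satisfies $\|u\|_{\infty}\leq D_{\alpha_{0}}$, uniformly in $\alpha\in[0,\alpha_{0}]$. Fix any $R>D_{\alpha_{0}}$; the formulation of the coincidence degree on possibly unbounded sets recalled in Appendix~\ref{appendix-A} then ensures that $D_{L}(L-N_{\alpha},\Gamma_{d,\mathcal{J}})$ is well defined, and the excision property yields
\begin{equation*}
D_{L}(L-N_{\alpha},\Gamma_{d,\mathcal{J}}) = D_{L}(L-N_{\alpha},\Gamma_{d,\mathcal{J}}\cap B(0,R))
\end{equation*}
for every $\alpha\in[0,\alpha_{0}]$. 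Next, $(A_{d,\mathcal{J}})$ rules out any non-negative solution with $\max_{t\in I^{+}_{j}}u(t)=d$ for some $j\in\mathcal{J}$, so no solution lies on $\partial\Gamma_{d,\mathcal{J}}$, while the previous bound keeps solutions away from $\partial B(0,R)$. Homotopy invariance on the bounded set $\Gamma_{d,\mathcal{J}}\cap B(0,R)$ therefore gives
\begin{equation*}
D_{L}(L-N,\Gamma_{d,\mathcal{J}}\cap B(0,R)) = D_{L}(L-N_{\alpha_{0}},\Gamma_{d,\mathcal{J}}\cap B(0,R)),
\end{equation*}
and $(C_{d,\mathcal{J}})$ asserts that $L-N_{\alpha_{0}}$ has no zero in $\Gamma_{d,\mathcal{J}}$, so by the existence property the right-hand side is zero. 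Chaining the two displayed equalities produces $D_{L}(L-N,\Gamma_{d,\mathcal{J}})=0$.

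The main obstacle, and the reason the three hypotheses are stated in this particular form, is making the homotopy admissible on the unbounded domain $\Gamma_{d,\mathcal{J}}$: $(B_{d,\mathcal{J}})$ supplies the uniform a priori bound that legitimizes the degree on an unbounded set and permits the reduction to a ball; $(A_{d,\mathcal{J}})$ is precisely what prevents the deformation from crossing the lateral portion of $\partial\Gamma_{d,\mathcal{J}}$ along the homotopy; and $(C_{d,\mathcal{J}})$ pins down an endpoint at which the degree is manifestly zero. Once non-negativity of solutions is secured throughout the homotopy, these three ingredients assemble into the argument above with essentially no further analytic work.
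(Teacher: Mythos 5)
Your argument is correct and is essentially the paper's: the paper simply observes that $(A_{d,\mathcal{J}})$, $(B_{d,\mathcal{J}})$, $(C_{d,\mathcal{J}})$ are the hypotheses $(a)$, $(b)$, $(c)$ of the abstract Lemma~\ref{lem-deg0-deFigueiredo} applied to $\Omega=\Gamma_{d,\mathcal{J}}$, and the proof of that appendix lemma is exactly the homotopy in $\alpha$ over $\mathopen{[}0,\alpha_{0}\mathclose{]}$ that you carry out inline (a priori bound from $(B)$ giving compactness of the solution set, $(A)$ keeping it off $\partial\Gamma_{d,\mathcal{J}}$, and $(C)$ forcing degree zero at the endpoint). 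The only difference is presentational: you unfold the abstract lemma rather than citing it.
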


\begin{proof}
According to the setting presented in this section, conditions $(A_{d,\mathcal{J}})$, $(B_{d,\mathcal{J}})$ and $(C_{d,\mathcal{J}})$
are equivalent to conditions $(a)$, $(b)$ and $(c)$ of Lemma~\ref{lem-deg0-deFigueiredo} with respect to the
open set $\Omega: = \Gamma_{d,\mathcal{J}}$. Therefore, the thesis of Lemma~\ref{lem-deg0} follows.
\end{proof}

\begin{remark}\label{rem-3.1}
From a theoretical point of view, the choice of the set of indices $\mathcal{J}$ with $\emptyset\neq\mathcal{J}\subseteq\{1,\ldots,m\}$
is arbitrary. However, as we will see in the next section,
in the actual applications of Lemma~\ref{lem-deg0} we shall take $\mathcal{J}\subsetneq\{1,\ldots,m\}$
because, in our setting, the case $\mathcal{J}=\{1,\ldots,m\}$ will be discussed in the frame of Lemma~\ref{lem-deg1-ball}.
$\hfill\lhd$
\end{remark}

\section{The main multiplicity result}\label{section-4}

In this section we use all the tools just presented in the previous sections to prove the following main result.

\begin{theorem}\label{MainTheorem}
Let $g \colon {\mathbb{R}}^{+} \to {\mathbb{R}}^{+}$ be a continuous function satisfying $(g_{*})$,
\begin{equation*}
g_{0} = 0 \quad \text{ and } \quad g_{\infty} > \max_{i=1,\ldots,m} \lambda_{1}^{i}.
\end{equation*}
Let $a \colon \mathbb{R} \to \mathbb{R}$ be a $T$-periodic locally integrable function satisfying $(a_{*})$.
Then there exists $\mu^{*}>0$ such that for all $\mu>\mu^{*}$ equation \eqref{eq-2.1} has at least $2^{m}-1$ positive $T$-periodic solutions.
\end{theorem}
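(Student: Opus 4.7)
The plan is to produce, for each nonempty subset of indices $\mathcal{I} \subseteq \{1, \ldots, m\}$, at least one positive $T$-periodic solution of \eqref{eq-2.1} lying in the open unbounded set $\Lambda^{\mathcal{I}}_{d,D}$ of \eqref{eq-2.lambda}. Since these $2^{m} - 1$ sets are pairwise disjoint, this will produce the desired multiplicity. The proof combines the three degree-computation lemmas of Section~\ref{section-3} with the Möbius-type combinatorial lemma of Appendix~\ref{appendix-B}.

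First I fix the constants $d$ and $D$. Using $g_{0} = 0$, I choose $d > 0$ so small that a maximum-principle argument rules out the equality $\max_{I^{+}_{j}} u = d$ for any non-negative $T$-periodic solution of the homotopies \eqref{BVP-3.4} and \eqref{BVP-3.5}, independently of the parameter. This simultaneously verifies $(H_{d})$ in Lemma~\ref{lem-deg1-ball} and $(A_{d,\mathcal{J}})$ in Lemma~\ref{lem-deg0} for every $\mathcal{J}$. Next, exploiting the lower bound $g_{\infty} > \max_{i} \lambda_{1}^{i}$, I select a non-negative $v \in L^{1}_{T}$ supported in some $I^{+}_{i_{0}}$ and a radius $D > d$ so large that a Sturm-type comparison with the first eigenfunction on $I^{+}_{i_{0}}$ shows that \eqref{BVP-3.5} admits no non-negative $T$-periodic solution with $\|u\|_{\infty} \leq D$ once $\alpha$ reaches some $\alpha_{0} > 0$; this is $(H^{D})$ in Lemma~\ref{lem-deg0-ball}.

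With $d$ and $D$ frozen, I pick $\mu^{*}$ so large that for $\mu > \mu^{*}$ the mean value condition $\int_{0}^{T} a_{\mu}(t)\,dt < 0$ holds and, for every nonempty $\mathcal{J} \subsetneq \{1, \ldots, m\}$, both the a priori bound $(B_{d,\mathcal{J}})$ and the non-existence property $(C_{d,\mathcal{J}})$ of Lemma~\ref{lem-deg0} are satisfied. The key mechanism is that the large coefficient $-\mu a^{-}(t)$ on each negative hump $I^{-}_{j}$ supplies a strong damping: a non-negative solution which is small at the endpoints of $I^{-}_{j}$ stays uniformly bounded inside by a constant independent of $\mu$, and a solution forced by the term $\alpha_{0} v$ cannot remain small on all the $I^{+}_{j}$ with $j \in \mathcal{J}$ at once. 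Lemma~\ref{lem-deg0} then yields $D_{L}(L-N, \Gamma_{d,\mathcal{J}}) = 0$ for every nonempty proper $\mathcal{J}$, while Lemmas~\ref{lem-deg1-ball} and~\ref{lem-deg0-ball} give $D_{L}(L-N, B(0,d)) = 1$ and $D_{L}(L-N, B(0,D)) = 0$.

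Using the disjoint decomposition $\Omega^{\mathcal{I}}_{d,D} = \bigsqcup_{\mathcal{J} \subseteq \mathcal{I}} \Lambda^{\mathcal{J}}_{d,D}$, additivity of the extended coincidence degree (Appendix~\ref{appendix-A}), and excision arguments identifying $D_{L}(L-N, \Omega^{\emptyset}_{d,D})$ with the degree on $B(0,d)$, $D_{L}(L-N, \Omega^{\{1, \ldots, m\}}_{d,D})$ with the degree on $B(0,D)$, and $D_{L}(L-N, \Omega^{\mathcal{I}}_{d,D})$ with the degree on $\Gamma_{d, \{1, \ldots, m\} \setminus \mathcal{I}}$ for every nonempty proper $\mathcal{I}$, I obtain a triangular linear system in the unknowns $D_{L}(L-N, \Lambda^{\mathcal{I}}_{d,D})$. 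The combinatorial lemma of Appendix~\ref{appendix-B} inverts the system and yields $D_{L}(L-N, \Lambda^{\mathcal{I}}_{d,D}) \neq 0$ for every nonempty $\mathcal{I}$, which produces a solution of the coincidence equation in each $\Lambda^{\mathcal{I}}_{d,D}$; by a maximum principle argument this solution is a strictly positive $T$-periodic solution of \eqref{eq-2.1}. The hardest point will be the $\mu$-uniform verification of $(B_{d,\mathcal{J}})$ and $(C_{d,\mathcal{J}})$: one has to show that the sup-norm bound $D_{\beta}$ and the threshold $\alpha_{0}$ can be chosen independently of $\mu$, so that a single $\mu^{*}$ suffices for all nonempty proper $\mathcal{J}$ at once. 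This requires barrier-type comparison estimates on each negative hump that exploit the large factor $\mu$ on $a^{-}(t)$ uniformly.
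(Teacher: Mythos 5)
Your overall architecture --- computing the degree on the small ball, the large ball and the sets $\Gamma_{d,\mathcal{J}}$ via Lemmas~\ref{lem-deg1-ball}, \ref{lem-deg0-ball} and \ref{lem-deg0}, and then inverting the resulting triangular system with the combinatorial lemma of Appendix~\ref{appendix-B} --- is exactly the paper's. But you have misplaced the point where the largeness of $\mu$ enters, and this is a genuine gap. You claim that taking $d$ small (using $g_{0}=0$) verifies $(H_{d})$ and $(A_{d,\mathcal{J}})$ ``independently of the parameter'', and that $\mu$ large is needed only for $(B_{d,\mathcal{J}})$ and $(C_{d,\mathcal{J}})$. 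It is the other way around. The maximum principle \eqref{eq-max} controls the maximum of a solution only on the negativity intervals; it gives no obstruction to a solution having $\max_{I^{+}_{j}}u$ exactly equal to $d$ on one positivity interval while being large on the others, and smallness of $d$ alone cannot exclude this for every $\alpha\geq 0$. The actual mechanism is quantitative: if $\max_{I^{+}_{j}}u=d$ with $d$ small, then $g_{0}=0$ forces $u$ to leave $I^{+}_{j}$ at level at least $d/2$ with small derivative, and on the adjacent negativity interval the term $\mu a^{-}(t)g(u)$ makes $e^{ct}u'$ increase so strongly that, for $\mu$ above an explicit threshold $\mu^{\pm}_{j}$, the solution is pushed beyond the a priori bound $R^{*}$ before that interval ends --- the desired contradiction. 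This is where $\mu>\mu^{*}$ is indispensable, both for $(H_{d})$ (which needs $\mu\geq\mu_{r}$, not merely $\int_{0}^{T}a_{\mu}(t)~\!dt<0$, when $g$ is only continuous) and for $(A_{d,\mathcal{J}})$ (which needs $\mu>\mu^{*}_{\mathcal{I}}$). Without it the positivity intervals are not decoupled and the degree cannot be localized in the sets $\Lambda^{\mathcal{I}}$.

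Conversely, the step you single out as the hardest --- the $\mu$-uniform verification of $(B_{d,\mathcal{J}})$ and $(C_{d,\mathcal{J}})$ --- is the easy part and involves no largeness of $\mu$ at all. The a priori bound comes from a Pr\"ufer/Sturm comparison carried out on the positivity intervals, where $a_{\mu}=a^{+}$ does not contain $\mu$; this yields a single constant $R^{*}$, depending only on $a^{+}$ and on $g_{\infty}>\max_{i}\lambda_{1}^{i}$, which serves as $D_{\beta}$ for every $\beta$, every admissible $v$ and every $\mu$. Condition $(C_{d,\mathcal{J}})$ then follows by integrating \eqref{BVP-3.5} over a subinterval of a positivity interval where $v\not\equiv 0$ and comparing with $R^{*}$; the threshold $\alpha_{0}$ is allowed to depend on the already fixed $\mu$, so no uniformity in $\mu$ is required. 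Finally, the heuristic you invoke is off: the large coefficient $-\mu a^{-}(t)$ is not a damping but a repulsion (on $I^{-}_{j}$ one has $(e^{ct}u')'\geq 0$ for every $\mu>0$, which is what gives \eqref{eq-max}), and it is precisely this repulsion, activated by $\mu$ large, that yields the blow-up contradiction underlying $(H_{d})$ and $(A_{d,\mathcal{J}})$.
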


\begin{remark}\label{rem-4.1}
The $2^{m}-1$ positive $T$-periodic solutions are obtained as follows. Along the proof we provide two constants $0 < r < R$
(with $r$ small and $R$ large) such that if $\mu>\mu^{*}$, given any nonempty set of indices $\mathcal{I} \subseteq \{1,\ldots,m\}$,
there exists at least one positive $T$-periodic solution $u_{\mathcal{I}} \in \Lambda^{\mathcal{I}}_{r,R}$.
Namely, $u_{\mathcal{I}}(t)$ is
small for all $t\in I^{+}_{i}$ when $i\notin \mathcal{I}$, and, on the other hand, $r < u_{\mathcal{I}}(t) < R$ for some $t\in I^{+}_{i}$ when $i\in \mathcal{I}$.
We will also prove that, when $\mu$ is sufficiently large, all these solutions are small in the $I^{-}_{i}$ intervals (see Section~\ref{section-4.5}).
$\hfill\lhd$
\end{remark}

\begin{remark}\label{rem-4.2}
The assumption $g_{0}=0$ in Theorem~\ref{MainTheorem} can be slightly improved to a condition of the form
\begin{equation*}
g_{0} < \lambda_{*},
\end{equation*}
where $\lambda_{*}$ is a positive constant which satisfies
\begin{equation*}
0 < \lambda_{*} < \min_{i=1,\ldots,m} \lambda_{1}^{i}.
\end{equation*}
A lower bound for $\lambda_{*}$ (although not sharp) is explicitly given by the constant $1/K_{0}$ provided in \eqref{cond-etar}
in Section~\ref{section-4.2} (see also Remark~\ref{rem-4.5} for more details).
When $c=0$ it is easy to check that $1/K_{0}$ is strictly less than
$4/\bigl{(}|I^{+}_{i}|\int_{I^{+}_{i}} a^{+}(t)~\!dt\bigr{)}$ (for all $i=1,\ldots,m$),
which are the constants corresponding to the application of Lyapunov
inequality to each of the intervals of positivity (cf.~\cite[ch.~XI]{Ha-1982}).
$\hfill\lhd$
\end{remark}

\subsection{General strategy and proof of Theorem~\ref{MainTheorem}}\label{section-4.1}

In this section, we describe the main steps that define the proof of Theorem~\ref{MainTheorem}.
The details can be found in the following three sections.

\medskip

First of all, in Section~\ref{section-4.2}, from $g_{0}=0$ we fix a (small) constant
$r>0$ such that
\begin{equation}\label{eq-etar}
\eta(r) := \sup_{0 < s \leq r} \dfrac{g(s)}{s}
\end{equation}
is sufficiently small (cf.~condition \eqref{cond-etar}). For this fixed $r$, we determine a constant $\mu_{r}$, with
\begin{equation}\label{eq-mud}
\mu_{r} > \mu^{\#} := \dfrac{\int_{0}^{T}a^{+}(t)~\!dt}{\int_{0}^{T}a^{-}(t)~\!dt},
\end{equation}
such that condition $(H_{r})$ of Lemma~\ref{lem-deg1-ball} is satisfied \textit{for every} $\mu \geq \mu_{r}$ and therefore
\begin{equation}\label{eq-degr}
D_{L}(L-N,B(0,r)) = 1.
\end{equation}
It is important to notice that, for the validity of \eqref{eq-degr}, it is necessary to take $\mu > \mu^{\#}$
in order to have $\int_{0}^{T} a_{\mu}(t)~\!dt < 0$.

\medskip

As a second step, in Section~\ref{section-4.3}, we show that there exists a constant $R^{*}$, with $0<r<R^{*}$, such that,
for any nontrivial function $v\in L^{1}(\mathopen{[}0,T\mathclose{]})$ satisfying
\begin{equation}\label{eq-v}
v(t)\geq 0 \quad \text{on } \bigcup_{i=1}^{m} I^{+}_{i}, \qquad v(t) = 0 \quad \text{on } \bigcup_{i=1}^{m} I^{-}_{i},
\end{equation}
and for all $\alpha \geq 0$, it holds that any non-negative solution $u(t)$ of \eqref{BVP-3.5} is bounded by $R^{*}$, namely
\begin{equation}\label{upperbound}
\max_{t\in\mathopen{[}0,T\mathclose{]}}u(t)<R^{*}.
\end{equation}
This result is proved using the lower bound of $g_{\infty}$ and the constant $R^{*}$
can be chosen independently on the functions $v(t)$ satisfying \eqref{eq-v}.

In this manner (for $\alpha=0$) we obtain also a priori bound for all non-negative $T$-periodic solutions of \eqref{eq-2.1}.
Then, we verify that condition $(H^{R})$ of Lemma~\ref{lem-deg0-ball} is satisfied for all $R \geq R^{*}$. Hence, we have
\begin{equation}\label{eq-degR}
D_{L}(L-N,B(0,R)) = 0, \quad \forall \, R \geq R^{*}.
\end{equation}
It is important to notice that, in order to prove \eqref{upperbound} and consequently \eqref{eq-degR},
we only use information about $a^{+}(t)$. Hence $R^{*}$ can be chosen independently on $\mu > 0$.

\begin{remark}\label{rem-4.3}
Using the additivity property of the coincidence degree, from \eqref{eq-degr} and \eqref{eq-degR}, we reach the following equality
\begin{equation*}
D_{L}(L-N,B(0,R^{*})\setminus B[0,r]) = -1.
\end{equation*}
Then, we obtain the existence of at least a nontrivial solution $u$ of \eqref{eq-coincidence-eq},
provided that $\mu>\mu_{r}$. Using a standard maximum principle argument,
it is easy to prove that $u$ is a positive $T$-periodic solution of \eqref{eq-2.1}
(cf.~\cite[Theorem~3.1 and Theorem~3.2]{FeZa-2015ade} and see also Remark~\ref{rem-4.6}).
$\hfill\lhd$
\end{remark}

\medskip

At this point, we fix a constant $R$ with
\begin{equation*}
0 < r < R^{*} \leq R
\end{equation*}
and, for all sets of indices $\mathcal{I}\subseteq\{1,\ldots,m\}$, we consider the open and unbounded sets
\begin{equation*}
\Omega^{\mathcal{I}}:=\Omega^{\mathcal{I}}_{r,R} \quad \text{ and } \quad \Lambda^{\mathcal{I}}:=\Lambda^{\mathcal{I}}_{r,R}
\end{equation*}
introduced in \eqref{eq-2.omega} and in \eqref{eq-2.lambda}, respectively.

As a third step, we will prove that
\begin{equation}\label{eq-DL}
D_{L}(L-N,\Lambda^{\mathcal{I}}) \neq 0, \quad \text{ for all } \, \mathcal{I}\subseteq\{1,\ldots,m\}.
\end{equation}

Before the proof of \eqref{eq-DL}, we make the following observation which plays a crucial role in
various subsequent steps.

\begin{remark}\label{rem-4.4}
Writing equation \eqref{eq-2.1} as
\begin{equation*}
\bigl{(}e^{ct} u'\bigr{)}' + e^{ct} a_{\mu}(t)g(u) = 0,
\end{equation*}
we find that $(e^{ct} u'(t))' \leq 0$ for almost every $t\in I^{+}_{i}$ and $(e^{ct} u'(t))' \geq 0$ for almost every $t\in I^{-}_{i}$
(where $u(t)\geq 0$ is any solution). Then, the map
\begin{equation*}
t \mapsto e^{ct} u'(t)
\end{equation*}
is non-increasing on each $I^{+}_{i}$ and non-decreasing on each $I^{-}_{i}$.
This property replaces the convexity of $u(t)$ on $I^{-}_{i}$, which is an obvious fact when $c=0$. For an arbitrary $c\in \mathbb{R}$ we can still
preserve some convexity type properties. In particular, for all $i=1,\ldots,m$ we have that
\begin{equation}\label{eq-max}
\max_{t\in I^{-}_{i}} u(t) = \max_{t\in \partial I^{-}_{i}} u(t) = \max \bigl{\{}u(\tau_{i}),u(\sigma_{i+1})\bigr{\}},
\end{equation}
which is nothing but a one-dimensional form of a maximum principle for the differential operator $L$.
We verify now this fact since this property, although elementary, will be used several times in the sequel.
Indeed, observe that if $u'(t^{*}) \geq 0$, for some $t^{*} \in \mathopen{[}\tau_{i},\sigma_{i+1}\mathclose{[}$,
then $u'(t) \geq 0$ for all $t\in \mathopen{[}t^{*},\sigma_{i+1}\mathclose{]}$,
hence $u(t^{*}) \leq u(\sigma_{i+1})$.
Similarly, if $u'(t^{*}) \leq 0$, for some $t^{*} \in \mathopen{]}\tau_{i},\sigma_{i+1}\mathclose{]}$, then $u'(t) \leq 0$
for all $t\in \mathopen{[}\tau_{i},t^{*}\mathclose{]}$, hence $u(t^{*}) \leq u(\tau_{i})$.
From these remarks, \eqref{eq-max} follows immediately.
$\hfill\lhd$
\end{remark}

In order to prove \eqref{eq-DL}, first of all we consider $\mathcal{I}=\emptyset$. Accordingly, we have that
\begin{equation}\label{deg-emptyset}
D_{L}(L-N,\Omega^{\emptyset}) = D_{L}(L-N,\Lambda^{\emptyset}) = D_{L}(L-N,B(0,r)) = 1.
\end{equation}
The first identity in \eqref{deg-emptyset} is trivial from the definitions of the sets, since $\Omega^{\emptyset} = \Lambda^{\emptyset}$.
It is also obvious that $B(0,r) \subseteq \Omega^{\emptyset}$. Conversely, let $u$ be a $T$-periodic solution
of \eqref{eq-3.tilde} belonging to $\Omega^{\emptyset}$. By the maximum principle, we know that $u$
is a (non-negative) $T$-periodic solution of \eqref{eq-2.1}. Moreover, $u(t) < r$ for all $t\in I^{+}_{i}$, $i=1,\ldots,m$. Then, from
\eqref{eq-max} we have that $u(t) < r$ for all $t\in \mathopen{[}0,T\mathclose{]}$.
(In the application of formula \eqref{eq-max} we have considered
the interval $I^{-}_{m}$, as an interval between $I^{+}_{m}$ and $I^{+}_{1} + T$, by virtue of the $T$-periodicity of the solution.)
Finally, by the excision property of the coincidence degree and \eqref{eq-degr}, formula \eqref{deg-emptyset} follows.

Next, we consider a nonempty subset of indices $\mathcal{I}\subsetneq\{1,\ldots,m\}$.
In Section~\ref{section-4.4}, choosing $d=r$, $\mathcal{J}:=\{1,\ldots,m\}\setminus\mathcal{I} \neq \emptyset$
and a nontrivial function $v\in L^{1}(\mathopen{[}0,T\mathclose{]})$ such that
\begin{equation}\label{eq-v-I}
v(t) \succ 0 \quad \text{on } \bigcup_{i\in\mathcal{I}} I^{+}_{i}, \qquad v(t) = 0 \quad \text{otherwise},
\end{equation}
we verify that the three conditions of Lemma~\ref{lem-deg0} hold, for $\mu$ sufficiently large.
More in detail, we provide a lower bound $\mu^{*}_{\mathcal{I}} > 0$,
with $\mu^{*}_{\mathcal{I}}$ independent on $\alpha$, such that condition $(A_{r,\mathcal{J}})$ is satisfied for all $\mu > \mu^{*}_{\mathcal{I}}$.
Then, we fix an arbitrary $\mu > \mu^{*}_{\mathcal{I}}$ and show that conditions $(B_{r,\mathcal{J}})$ and $(C_{r,\mathcal{J}})$ are satisfied as well.

Since $R$ is an upper bound for all the solutions of \eqref{BVP-3.5} (cf.~\eqref{upperbound}),
comparing the definitions \eqref{eq-2.omega} and \eqref{eq-3.gamma}, we see that $u \in \Omega^{\mathcal{I}}$ if and only if
$u \in \Gamma_{r,\mathcal{J}}$, for each solution $u$. Hence,
applying the excision property of the coincidence degree and Lemma~\ref{lem-deg0}, we obtain
\begin{equation}\label{deg-Ja}
D_{L}(L-N,\Omega^{\mathcal{I}}) = D_{L}(L-N,\Gamma_{r,\mathcal{J}}) = 0,
\quad \text{ for all } \, \emptyset\neq\mathcal{I}\subsetneq\{1,\ldots,m\}.
\end{equation}

Using again \eqref{eq-max} in Remark~\ref{rem-4.4} and arguing as above for $r$,
we can check that $R$ is an a priori bound for the solutions on the whole domain.
In this manner, by \eqref{eq-degR}, if $\mathcal{I}=\{1,\ldots,m\}$ we obtain
\begin{equation*}
D_{L}(L-N,\Omega^{\mathcal{I}}) = D_{L}(L-N,B(0,R)) = 0.
\end{equation*}

In conclusion, putting together this latter relation with \eqref{deg-Ja}, we find that
\begin{equation}\label{deg-J}
D_{L}(L-N,\Omega^{\mathcal{I}}) = 0,
\quad \text{ for all } \, \emptyset\neq\mathcal{I}\subseteq\{1,\ldots,m\}.
\end{equation}

\medskip

Finally, we define
\begin{equation*}
\mu^{*} := \mu_{r} \vee \max \bigl{\{} \mu^{*}_{\mathcal{I}} \colon \emptyset \neq \mathcal{I} \subseteq \{1,\ldots,m\} \bigr{\}},
\end{equation*}
where, as usual, ``$\vee$'' denotes the maximum between two numbers. As a byproduct of the proof of
$(A_{{\mathcal J},r})$ in Section~\ref{section-4.4} (for $\alpha = 0$) we also have that for each $\mu > \mu^{*}$
the degree $D_{L}(L-N,\Lambda^{\mathcal{I}})$ is well defined for all $\mathcal{I}\subseteq\{1,\ldots,m\}$
(technically, the matter is to observe that for $\mu$ sufficiently large the are no $T$-periodic solutions touching the
level $r$ on some intervals $I^{+}_{i}$). At this point, following the same inductive argument
as in \cite[Lemma~4.1]{FeZa-2015jde} and using \eqref{deg-emptyset} and \eqref{deg-J},
it is possible to prove that
\begin{equation}\label{eq-deg-Lambda}
D_{L}(L-N,\Lambda^{\mathcal{I}})=(-1)^{\#\mathcal{I}}, \quad \text{ for all } \, \mathcal{I}\subseteq\{1,\ldots,m\},
\end{equation}
holds for each $\mu > \mu^{*}$. In this manner, \eqref{eq-DL} is verified.
Since formula \eqref{eq-deg-Lambda} is crucial to prove our multiplicity result, we give the details of the proof in Appendix~\ref{appendix-B}.

In conclusion, since the coincidence degree is nonzero in each $\Lambda^{\mathcal{I}}$, there exists a solution
$u\in\Lambda^{\mathcal{I}}$ of \eqref{eq-coincidence-eq}, for all $\mathcal{I}\subseteq\{1,\ldots,m\}$.
Notice that $0\notin\Lambda^{\mathcal{I}}$ for all $\emptyset\neq\mathcal{I}\subseteq\{1,\ldots,m\}$.
As remarked in Section~\ref{section-3}, by a maximum principle argument, for $\mathcal{I}\neq\emptyset$ the solution $u\in\Lambda^{\mathcal{I}}$
of \eqref{eq-coincidence-eq} is a positive $T$-periodic solution of \eqref{eq-2.1}. Moreover, by \eqref{eq-max}, we also deduce that $\|u\|_{\infty} < R$.
At this moment, we can summarize what we have proved as follows.
\begin{quote}
\textit{For each nonempty set of indices $\mathcal{I}\subseteq\{1,\ldots,m\}$, there exists at least one $T$-periodic solution
$u_{\mathcal{I}}$ of \eqref{eq-2.1} with $u_{\mathcal{I}}\in \Lambda^{\mathcal{I}}$ and such that
$0 < u_{\mathcal{I}}(t) < R$ for all $t\in\mathbb{R}$.}
\end{quote}
Finally, since the number of nonempty subsets of a set with $m$ elements is $2^{m}-1$ and the sets $\Lambda^{\mathcal{I}}$
are pairwise disjoint, we conclude that there are at least $2^{m}-1$ positive $T$-periodic solutions of \eqref{eq-2.1}.
The thesis of Theorem~\ref{MainTheorem} follows.
\qed

\medskip

Having already outlined the scheme of the proof, we provide now all the missing technical details.

\subsection{Proof of $(H_{r})$ for $r$ small}\label{section-4.2}

In this section we find a sufficiently small real number $r > 0$ such that $(H_{r})$ is satisfied for all $\mu$ large enough.

Let us start by introducing some constants that are crucial for our next estimates. Define
\begin{equation}\label{eq-Ki}
K_{i} := \|a^{+}\|_{L^{1}(I^{+}_{i})} e^{|c||I^{+}_{i}|}, \quad i = 1\ldots,m,
\end{equation}
and
\begin{equation}\label{eq-K0}
K_{0}:= 2 \max_{i=1,\ldots,m} K_{i} \bigl{(} |I^{+}_{i}|+ e^{|c||I^{-}_{i}|}|I^{-}_{i}| \bigr{)}.
\end{equation}
By $(g_{*})$ and $g_{0}=0$, we know that $\eta(s) \to 0^{+}$ as $s\to 0^{+}$ (where $\eta(s)$ is defined in \eqref{eq-etar}).
So, we fix $r>0$ such that
\begin{equation}\label{cond-etar}
\eta(r) < \dfrac{1}{K_{0}}.
\end{equation}
Then, we fix a positive constant $\mu_{r} > \mu^{\#}$ such that
\begin{equation}\label{eq-mur}
\mu_{r} > \dfrac{K_{0} \, e^{|c||I^{-}_{i}|}} {\int_{\tau_{i}}^{\sigma_{i+1}} \int_{\tau_{i}}^{t} a^{-}(\xi) ~\!d\xi~\!dt} \dfrac{\eta(r)}{\gamma(r)},
\quad \text{ for all } \, i =1,\ldots,m,
\end{equation}
where we have set
\begin{equation*}
\gamma(r) := \min_{\frac{r}{2} \leq s \leq r} \dfrac{g(s)}{s}.
\end{equation*}

We verify that condition $(H_{d})$ of Lemma~\ref{lem-deg1-ball} is satisfied for $d=r$, chosen as in \eqref{cond-etar},
and for all $\mu \geq \mu_{r}$. Accordingly, we claim that there is no non-negative solution $u(t)$ of
\eqref{BVP-3.4}, for some $\vartheta \in \mathopen{]}0,1\mathclose{]}$ and $\mu \geq \mu_{r}$, with $\|u\|_{\infty} = r$.

Arguing by contradiction, let us suppose that, for some $\vartheta$ and $\mu$ with $0 < \vartheta \leq 1$ and $\mu \geq \mu_{r}$,
there exists a $T$-periodic solution $u(t)$ of
\begin{equation}\label{eq-vartheta}
u''(t) + c u'(t) + \vartheta a_{\mu}(t) g(u(t)) = 0,
\end{equation}
with $0 \leq u(t) \leq \max_{t\in\mathopen{[}0,T\mathclose{]}} u(t) = r$.
Reasoning as in Remark~\ref{rem-4.4}, we observe that the solution $u(t)$ in the interval of non-positivity attains its maximum at an end-point.
Thus, there is an index $j\in\{1,\ldots,m\}$ such that
\begin{equation*}
r = \max_{t\in\mathopen{[}0,T\mathclose{]}} u(t) = \max_{t\in I^{+}_{j}} u(t) = u(\hat{t}_{j}), \quad \text{ for some } \,
\hat{t}_{j}\in I^{+}_{j}=\mathopen{[}\sigma_{j},\tau_{j}\mathclose{]}.
\end{equation*}
Next, we notice that $u'(\hat{t}_{j}) = 0$.
Indeed, if $u'(t)\neq0$ for all $t\in I^{+}_{j}$ such that $u(t)=r$, then $t=\sigma_{j}$ or $t=\tau_{j}$.
If $t=\tau_{j}$, then $u'(\tau_{j})>0$ and, since the map $t\mapsto e^{ct} u'(t)$ is non-decreasing on $I^{-}_{j}$ (cf.~Remark~\ref{rem-4.4}),
we have $u'(t)\geq u'(\tau_{j}) e^{c(\tau_{j}-t)} > 0$, for all $t\in I^{-}_{j}$. Then we obtain $r=u(\tau_{j})<u(\sigma_{j+1})$, a contradiction with respect to $\|u\|_{\infty}=r$.
If $t=\sigma_{j}$, one can obtain an analogous contradiction considering the interval $I^{-}_{j-1}$ (if $j=1$, we deal with $I^{-}_{m}-T$, by $T$-periodicity).

Writing \eqref{eq-vartheta} on $I^{+}_{j}$ as
\begin{equation*}
\bigl{(}e^{ct} u'(t)\bigr{)} ' = -\vartheta a^{+}(t) g(u(t)) e^{ct},
\end{equation*}
integrating between $\hat{t}_{j}$ and $t$ and using $u'(\hat{t}_{j}) = 0$, we obtain
\begin{equation*}
u'(t) = -\vartheta \int_{\hat{t}_{j}}^{t} a^{+}(\xi) g(u(\xi)) e^{c(\xi-t)} ~\!d\xi, \quad \forall\, t\in I^{+}_{j}.
\end{equation*}
Hence,
\begin{equation}\label{eq-u'norm}
\|u'\|_{L^{\infty}(I^{+}_{j})} \leq \vartheta \|a^{+}\|_{L^{1}(I^{+}_{j})} \eta(r)r e^{|c||I^{+}_{j}|} = \vartheta K_{j} \eta(r) r.
\end{equation}
We conclude that
\begin{equation}\label{eq-4.u}
r \geq u(\tau_{j}) = u(\hat{t}_{j}) + \int_{\hat{t}_{j}}^{\tau_{j}} u'(\xi)~\!d\xi \geq r \bigl{(}1 - \vartheta K_{j}
\eta(r) |I^{+}_{j}|\bigr{)}.
\end{equation}

Now we consider the subsequent (adjacent) interval $I^{-}_{j}=\mathopen{[}\tau_{j},\sigma_{j+1}\mathclose{]}$
where the weight is non-positive. Since (as just remarked) the map $t\mapsto e^{ct} u'(t)$ is non-decreasing on $I^{-}_{j}$, we have
$e^{ct} u'(t) \geq e^{c\tau_{j}}u'(\tau_{j})$, for all $t\in I^{-}_{j}$.
Therefore, recalling also \eqref{eq-u'norm}, we get
\begin{equation*}
u'(t) \geq e^{c(\tau_{j}-t)}u'(\tau_{j}) \geq - e^{|c||I^{-}_{j}|} \vartheta K_{j} \eta(r) r, \quad \forall \, t\in I^{-}_{j}.
\end{equation*}
Integrating on $\mathopen{[}\tau_{j},t\mathclose{]} \subseteq I^{-}_{j}$ and using \eqref{eq-4.u}, we have that
\begin{equation}\label{below}
\begin{aligned}
r \geq u(t)
     &\geq u(\tau_{j}) - |I^{-}_{j}| e^{|c||I^{-}_{j}|} \vartheta K_{j} \eta(r) r
\\   &\geq r \Bigl{(} 1 - \vartheta K_{j} \bigl{(}|I^{+}_{j}| + |I^{-}_{j}| e^{|c||I^{-}_{j}|}\bigr{)}\eta(r) \Bigr{)}
\\   &\geq  r \biggl{(} 1 - \vartheta \dfrac{K_{0}}{2} \eta(r) \biggr{)}
\geq r\biggl{(} 1 - \dfrac{\vartheta}{2} \biggr{)} \geq \dfrac{r}{2}
\end{aligned}
\end{equation}
holds for all $t\in I^{-}_{j}$.
Writing \eqref{eq-vartheta} on $I^{-}_{j}$ as
\begin{equation*}
\bigl{(}e^{ct} u'(t)\bigr{)}' = \mu \vartheta a^{-}(t) g(u(t)) e^{ct}
\end{equation*}
and integrating on $\mathopen{[}\tau_{j},t\mathclose{]}\subseteq I^{-}_{j}$, we have
\begin{equation*}
u'(t) = e^{c(\tau_{j}-t)} u'(\tau_{j}) + \mu \vartheta \int_{\tau_{j}}^{t} a^{-}(\xi) g(u(\xi)) e^{c(\xi-t)} ~\!d\xi, \quad \forall \, t\in I^{-}_{j}.
\end{equation*}
Then, using \eqref{eq-u'norm} and recalling the definition of $\gamma(r)$, we find
\begin{equation*}
u'(t) \geq \vartheta r \biggl{(} -e^{|c||I^{-}_{j}|}K_{j}\eta(r) + \dfrac{1}{2}\mu \gamma(r) e^{-|c||I^{-}_{j}|} \int_{\tau_{j}}^{t} a^{-}(\xi) ~\!d\xi \biggr{)},
\quad \forall \, t\in I^{-}_{j}.
\end{equation*}
Finally, integrating on $I^{-}_{j}$, we obtain
\begin{equation*}
\begin{aligned}
u(\sigma_{j+1}) &= u(\tau_{j}) + \int_{\tau_{j}}^{\sigma_{j+1}} u'(\xi)~\!d\xi
\\ &\geq r \biggl{(} 1 -\vartheta K_{j} \eta(r)|I^{+}_{j}| - \vartheta |I^{-}_{j}| e^{|c||I^{-}_{j}|}K_{j}\eta(r)
\\ &\hspace*{30pt} + \mu \dfrac{\vartheta}{2}\gamma(r) e^{-|c||I^{-}_{j}|} \int_{\tau_{j}}^{\sigma_{j+1}} \int_{\tau_{j}}^{t} a^{-}(\xi) ~\!d\xi~\!dt \biggr{)}
\\ &> r,
\end{aligned}
\end{equation*}
a contradiction with respect to the choice of $\mu \geq \mu_{r}$ (cf.~\eqref{eq-mur}).
\qed

\begin{remark}\label{rem-4.5}
From the proof it is clear that we do not really need that $g_{0}=0$, but we only use the fact that
$r > 0$ can be chosen so that \eqref{cond-etar} is satisfied. Accordingly, our result is still valid if we assume that
$g_{0}$ is sufficiently small. Clearly, some smallness condition on $g_{0}$ has to be required
for the validity of our estimates. Indeed, the same argument of the proof (if applied to $g(s) = \lambda s$) shows that
$1/K_{0}$ must be strictly less to all the first eigenvalues
$\lambda_{1}^{i}$ as well as to all the first eigenvalues of the Dirichlet-Neumann
problems (or focal point problems) relative to the intervals $I^{+}_{i}$. As a consequence, we could slightly improve
condition $g_{0} = 0$ of Theorem~\ref{MainTheorem} to an assumption of the form $g_{0} < \lambda_{*}$, where
the optimal choice for $\lambda_{*}$ would be that of a suitable positive constant satisfying $\lambda_{*} < \min_{i} \lambda_{1}^{i}$
(as well as other similar conditions). The constant $1/K_{0}$ found in the proof
could be improved by choosing a smaller value for $K_{0}$ in \eqref{eq-K0}. Indeed, note that the factor $2$
in \eqref{eq-K0} corresponds to the lower bound $u(t) \geq r/2$ in \eqref{below}. We do not investigate further this aspect
as it is not prominent for our results. Technical estimates related to Lyapunov type inequalities and lower bounds for the
first eigenvalue of Dirichlet-Neumann problems with weights have been studied, for instance, in \cite{BrHi-1997,El-1974,Pi-2013}.
$\hfill\lhd$
\end{remark}

\begin{remark}\label{rem-4.6}
We stress that in the above proof we have used only the continuity of $g(s)$ (near $s=0$), condition $(g_{*})$ and the hypothesis $g_{0}=0$.
In our recent work \cite{FeZa-2015ade}, to obtain the existence of at least a $T$-periodic solution of equation \eqref{eq-2.1}, we have proved
the existence of an $r > 0$ small such that $(H_{d})$ holds for all $0<d\leq r$, provided that the mean value of the weight is negative.
In our case, such condition on the weight is equivalent to $\mu > \mu^{\#}$, which is a better condition than $\mu \geq \mu_{r}$
given here in our proof. However, in order to use a weaker assumption on the weight, in \cite{FeZa-2015ade}
we have to require a stronger hypothesis
on the nonlinearity $g(s)$ near zero. In particular, we have to suppose that
$g(s)$ is continuously differentiable on a right neighborhood of $s=0$ (cf.~\cite[Theorem~3.2]{FeZa-2015ade})
or that $g(s)$ is regularly oscillating at zero, i.e.~
\begin{equation*}
\lim_{\substack{\; s\to0^{+} \\ \omega\to1}}\dfrac{g(\omega s)}{g(s)} = 1
\end{equation*}
(cf.~\cite[Theorem~3.1]{FeZa-2015ade} and the references listed at the end of \cite[\S~1]{FeZa-2015ade}
for more information about regularly oscillating functions).

With regard to this topic, we observe that even if we have proved the existence of a sufficiently small $r$ such that $(H_{r})$ holds,
we can also verify that $(H_{d})$ holds \textit{for all} $0<d\leq r$ under supplementary assumptions on $g(s)$ near zero.
For instance,
this claim can be proved if we suppose that $g(s)$ satisfies
\begin{equation*}
\liminf_{s\to 0^{+}}\dfrac{g(\sigma s)}{g(s)} > 0, \quad \text{ for all } \, \sigma > 1,
\end{equation*}
(cf.~\eqref{eq-mur}). The above hypothesis is also called a \textit{lower $\sigma$-condition at zero} and it is dual with respect to
the more classical \textit{$\Delta_{2}$-condition at infinity} considered in the theory of Orlicz-Sobolev spaces (cf.~\cite[ch.~VIII]{Ad-1975}).
We refer to \cite{AlAr-1977, Dj-1998} for a discussion about these ones and related growth assumptions at infinity, as well as for a
comparison between different Karamata type conditions.
$\hfill\lhd$
\end{remark}

\subsection{The a priori bound $R^{*}$}\label{section-4.3}

Consider an arbitrary function $v\in L^{1}(\mathopen{[}0,T\mathclose{]})$ as in \eqref{eq-v}.
For example, as $v(t)$ we can take the characteristic function of the set
\begin{equation*}
A:=\bigcup_{i=1}^{m} I^{+}_{i}.
\end{equation*}
We will show that there exists $R^{*}>0$ such that, for each $\alpha\geq0$, every non-negative $T$-periodic solution $u(t)$ of \eqref{BVP-3.5}
satisfies $\max_{t\in A} u(t) < R^{*}$.

First of all, for all $i=1,\ldots,m$, we look for a bound $R_{i}>0$ such that any non-negative $T$-periodic solution $u(t)$
of \eqref{BVP-3.5}, with $\alpha \geq 0$, satisfies
\begin{equation}\label{eq-Ri}
\max_{t\in I^{+}_{i}} u(t) < R_{i}.
\end{equation}
Let us fix $i \in \{1,\ldots,m\}$. Let $0 < \varepsilon < (\tau_{i}-\sigma_{i})/2$ be fixed such that
\begin{equation*}
a(t)\not\equiv 0 \quad \text{on } \mathopen{[}\sigma_{i}+\varepsilon,\tau_{i}-\varepsilon\mathclose{]} \subseteq I^{+}_{i}
\end{equation*}
and the first (positive) eigenvalue $\hat{\lambda}$ of the eigenvalue problem
\begin{equation*}
\begin{cases}
\, \varphi'' + c\varphi' + \lambda \, a(t) \varphi = 0 \\
\, \varphi(\sigma_{i}+\varepsilon)=\varphi(\tau_{i}-\varepsilon)=0
\end{cases}
\end{equation*}
satisfies
\begin{equation*}
g_{\infty} > \hat{\lambda}.
\end{equation*}
For notational convenience, we set $\sigma_{\varepsilon}:=\sigma_{i}+\varepsilon$ and $\tau_{\varepsilon}:=\tau_{i}-\varepsilon$.
The existence of $\varepsilon$ is ensured by the continuity of the first eigenvalue as a function of the boundary points
and by hypothesis $g_{\infty} > \lambda_{1}^{i}$.

We fix a constant $M>0$ such that
\begin{equation*}
g_{\infty} > M > \hat{\lambda}.
\end{equation*}
It follows that there exists a constant $\tilde{R}=\tilde{R}(M)>0$ such that
\begin{equation*}
g(s) > M s, \quad \forall \, s\geq \tilde{R}.
\end{equation*}

Arguing as in \cite[\S~3.1]{BoFeZa-pp2015}, we can prove that
\begin{equation}\label{eq-epsa}
\begin{aligned}
& u'(t) \leq u(t)\dfrac{e^{|c|T}}{\varepsilon}, &\text{ for all } t\in \mathopen{[}\sigma_{\varepsilon},\tau_{i}\mathclose{]} \, \text{ such that } \, u'(t) \geq 0; \\
& |u'(t)| \leq u(t)\dfrac{e^{|c|T}}{\varepsilon}, &\text{ for all } t\in \mathopen{[}\sigma_{i},\tau_{\varepsilon}\mathclose{]} \, \text{ such that } \, u'(t) \leq 0.
\end{aligned}
\end{equation}
To understand how to get these inequalities, we note that the result is trivial if $u'(t) = 0$. Then,
we deal separately with the cases $u'(t) > 0$ and $u'(t) < 0$.
In the former case, since the map $t \mapsto e^{ct} u'(t)$ is
non-increasing on $I^{+}_{i}$ we find that $u'(\xi) \geq e^{c(t-\xi)}u'(t)$ for all $\sigma_{i} \leq \xi \leq t$. Therefore the first
inequality in \eqref{eq-epsa} is obtained after an integration on $\mathopen{[}\sigma_{i},t\mathclose{]}$ and observing that $t - \sigma_{i} \geq \varepsilon$.
A symmetric argument works if $u'(t) < 0$, integrating on $\mathopen{[}t,\tau_{i}\mathclose{]}$. This yields to the second inequality in \eqref{eq-epsa}.

\medskip

We are ready now to prove \eqref{eq-Ri}. By contradiction, suppose there is not a constant $R_{i}>0$ with the properties listed above. So, for each integer $n>0$
there exists a solution $u_{n}\geq0$ of \eqref{BVP-3.5} with $\max_{t\in I^{+}_{i}}u_{n}(t)=:\hat{R}_{n}>n$.
For each $n > \tilde{R}$ we take $\hat{t}_{n} \in I^{+}_{i}$ such that $u_{n}(\hat{t}_{n}) = \hat{R}_{n}$ and let
$\mathopen{]}\varsigma_{n},\omega_{n}\mathclose{[}\subseteq I^{+}_{i}$
be the intersection with $\mathopen{]}\sigma_{i},\tau_{i}\mathclose{[}$ of the maximal open interval containing $\hat{t}_{n}$ and
such that $u_{n}(t) > \tilde{R}$ for all $t\in\mathopen{]}\varsigma_{n},\omega_{n}\mathclose{[}$.
We fix an integer $N$ such that
\begin{equation*}
N > \tilde{R} + \dfrac{\tilde{R} \, T e^{2|c|T}}{\varepsilon}
\end{equation*}
and we claim that $\mathopen{]}\varsigma_{n},\omega_{n}\mathclose{[} \supseteq \mathopen{[}\sigma_{\varepsilon},\tau_{\varepsilon}\mathclose{]}$, for each $n \geq N$.
Suppose by contradiction that $\sigma_{\varepsilon} \leq \varsigma_{n}$.
In this case, we find that $u_{n}(\varsigma_{n}) =\tilde{R}$ and $u'_{n}(\varsigma_{n}) \geq 0$.
Moreover, $u'_{n}(\varsigma_{n}) \leq \tilde{R}e^{|c|T}/\varepsilon$. Using the monotonicity
of $t\mapsto e^{ct}u'(t)$, we get $e^{ct}u'(t) \leq e^{c\varsigma_{n}}u'(\varsigma_{n})$ for every $t \in \mathopen{[}\varsigma_{n},\hat{t}_{n}\mathclose{]}$
and therefore we find $u'(t) \leq \tilde{R}e^{2|c|T}/\varepsilon$
for every $t \in \mathopen{[}\varsigma_{n},\hat{t}_{n}\mathclose{]}$.
Finally, an integration on $\mathopen{[}\varsigma_{n},\hat{t}_{n}\mathclose{]}$ yields
\begin{equation*}
n < \hat{R}_{n} = u_{n}(\hat{t}_{n}) \leq \tilde{R} + \dfrac{\tilde{R} \, T e^{2|c|T}}{\varepsilon},
\end{equation*}
hence a contradiction, since $n\geq N$. A symmetric argument provides a contradiction if we suppose that $\omega_{n} \leq \tau_{\varepsilon}$.
This proves the claim.

So, we can fix an integer $N > \tilde{R}$ such that $u_{n}(t)>\tilde{R}$ for every $t\in \mathopen{[}\sigma_{\varepsilon},\tau_{\varepsilon}\mathclose{]}$
and $n\geq N$. The function $u_{n}(t)$, being a solution of equation \eqref{BVP-3.5} or equivalently of
\begin{equation*}
\bigl{(}e^{ct} u'\bigr{)}' + e^{ct} \bigl{(}a_{\mu}(t) g(u) +\alpha v(t)\bigr{)} = 0,
\end{equation*}
satisfies
\begin{equation*}
\begin{cases}
\, u_{n}'(t) = e^{-ct} y_{n}(t) \\
\, y_{n}'(t) = - e^{ct} \bigl{(}a_{\mu}(t)g(u_{n}(t))+\alpha v(t)\bigr{)}.
\end{cases}
\end{equation*}
Via a Pr\"{u}fer transformation, we pass to the polar coordinates
\begin{equation*}
e^{ct}u'_{n}(t) = r_{n}(t) \cos \vartheta_{n}(t), \qquad u_{n}(t) = r_{n}(t) \sin \vartheta_{n}(t),
\end{equation*}
and obtain, for every $t\in \mathopen{[}\sigma_{\varepsilon},\tau_{\varepsilon}\mathclose{]}$, that
\begin{equation*}
\begin{aligned}
\vartheta'_{n}(t) &= e^{-ct} \cos^{2} \vartheta_{n}(t) + \dfrac{e^{ct}\bigl{(}a^{+}(t)g(u_{n}(t))+\alpha v(t)\bigr{)}}{u_{n}(t)} \sin^{2}\vartheta_{n}(t)
\\                &\geq e^{-ct}\cos^{2} \vartheta_{n}(t) + e^{ct} M a^{+}(t) \sin^{2}\vartheta_{n}(t).
\end{aligned}
\end{equation*}
We also consider the linear equation
\begin{equation}\label{eq-4.23}
\bigl{(}e^{ct} u'\bigr{)}' + e^{ct} M a^{+}(t) u = 0
\end{equation}
and its associated angular coordinate $\vartheta(t)$ (via the Pr\"{u}fer transformation), which satisfies
\begin{equation*}
\vartheta'(t) = e^{-ct} \cos^{2} \vartheta(t) + e^{ct} M a^{+}(t) \sin^{2}\vartheta(t).
\end{equation*}
Note also that the angular functions $\vartheta_{n}$ and $\vartheta$ are non-decreasing in $ \mathopen{[}\sigma_{\varepsilon},\tau_{\varepsilon}\mathclose{]}$.
Using a classical comparison result in the frame of Sturm's theory (cf.~\cite[ch.~8, Theorem~1.2]{CoLe-1955}), we find that
\begin{equation}\label{eq-4.24}
\vartheta_{n}(t) \geq \vartheta(t), \quad \forall \, t\in \mathopen{[}\sigma_{\varepsilon},\tau_{\varepsilon}\mathclose{]},
\end{equation}
if we choose $\vartheta(\sigma_{\varepsilon}) = \vartheta_{n}(\sigma_{\varepsilon})$.
Consider now a fixed $n \geq N$. Since $u_{n}(t) \geq \tilde{R}$ for every $t \in \mathopen{[}\sigma_{\varepsilon},\tau_{\varepsilon}\mathclose{]}$, we must have
\begin{equation}\label{eq-4.25}
\vartheta_{n}(t)\in\mathopen{]}0,\pi\mathclose{[},\quad \forall \, t\in \mathopen{[}\sigma_{\varepsilon},\tau_{\varepsilon}\mathclose{]}.
\end{equation}
On the other hand, by the choice of $M>0$, we know that any non-negative solution $u(t)$ of \eqref{eq-4.23} with $u(\sigma_{\varepsilon}) > 0$
must vanish at some point
in $\mathopen{]}\sigma_{\varepsilon},\tau_{\varepsilon}\mathclose{[}$ (see \cite[ch.~8, Theorem~1.1]{CoLe-1955}).
Therefore, from $\vartheta(\sigma_{\varepsilon}) = \vartheta_{n}(\sigma_{\varepsilon}) \in \mathopen{]}0,\pi\mathclose{[}$,
we conclude that there exists $t^{*}\in \mathopen{]}\sigma_{\varepsilon},\tau_{\varepsilon}\mathclose{[}$ such that $\vartheta(t^{*}) = \pi$.
By \eqref{eq-4.24} we have that $\vartheta_{n}(t^{*}) \geq \pi$, which contradicts \eqref{eq-4.25}.

We conclude that for each $i=1,\ldots,m$ there is a constant $R_{i}>0$ such that any non-negative $T$-periodic solution $u(t)$
of \eqref{BVP-3.5}, with $\alpha \geq 0$, satisfies $\max_{t\in I^{+}_{i}} u(t) < R_{i}$.

Now we can take as $R^{*}$ any constant such that
$R^{*} > r$ (with $r$ as in Section~\ref{section-4.2}) and
\begin{equation}\label{boundR*}
R^{*} \geq R_{i}, \quad \text{for all } \, i=1,\ldots,m.
\end{equation}
Thus $\max_{t\in A} u(t) < R^{*}$ is proved. Notice that $R^{*}$ does not depend on $v(t)$ and on $\mu$, since for the constants $R_{i}$
we have used only information about $a^{+}(t)$.

Finally, using \eqref{eq-max} in Remark~\ref{rem-4.4} and reasoning as in the proof of \eqref{deg-emptyset}
(we just need to repeat verbatim the same argument, by replacing $r$ with $R^{*}$),
we can check that $R^{*}$ is an a priori bound for the solutions
on the whole domain. In this manner \eqref{upperbound} is proved.

\begin{remark}\label{rem-4.7}
If $c=0$ in equation \eqref{BVP-3.5}, the existence of the upper bound $R^{*}$ can be obtained in a different manner,
still using a Sturm comparison argument (see \cite[Lemma~6.2]{FeZa-2015ade} for the details).
$\hfill\lhd$
\end{remark}

\begin{remark}\label{rem-4.8}
A careful reading of the proof of the a priori bound shows that the inequality $u(t) < R^{*}$
for all $t\in \mathopen{[}0,T\mathclose{]}$ has been proved independently on the assumption of
$T$-periodicity of $u(t)$. Hence, the same a priori bound on $\mathopen{[}0,T\mathclose{]}$
is valid for any non-negative solution $u(t)$ of \eqref{eq-2.1}, with $u(t)$ defined
on an interval containing $\mathopen{[}0,T\mathclose{]}$.
We claim now that the following stronger property holds.
\begin{quote}
\textit{If $w(t)$ is a non-negative solution of \eqref{eq-2.1} (not necessarily periodic), then
\begin{equation*}
w(t) < R^{*}, \quad \forall \, t\in \mathbb{R}.
\end{equation*}
}
\end{quote}
To check this assertion, suppose by contradiction that there exists $t^{*}\in \mathbb{R}$ such that $w(t^{*})\geq R^{*}$.
Let also $\ell \in \mathbb{Z}$ be such that $t^{*}\in \mathopen{[}\ell T,(\ell+1)T\mathclose{]}$.
In this case, thanks to the $T$-periodicity of the weight coefficient $a_{\mu}(t)$,
the function $u(t):= w(t+\ell T)$ is still a (non-negative) solution of \eqref{eq-2.1} with
$\max_{t\in \mathopen{[}0,T\mathclose{]}} u(t) \geq u(t^{*}-\ell T) = w(t^{*}) \geq R^{*}$,
a contradiction with respect to the previous established a priori bound of $u(t)$ on $\mathopen{[}0,T\mathclose{]}$.
$\hfill\lhd$
\end{remark}

\medskip
\noindent
\textit{Verification of $(H^{R})$ for $R\geq R^{*}$. }
We have found a constant $R^{*} > r$ such that any non-negative solution $u(t)$ of \eqref{BVP-3.5},
with $\alpha \geq 0$, satisfies $\|u\|_{\infty} < R^{*}$.
Then, for $R\geq R^{*}$, the first part of $(H^{R})$ is valid independently of the choice of $\alpha_{0}$.

Let $\alpha_{0} > 0$ be fixed such that
\begin{equation*}
\alpha_{0}>\dfrac{\mu \|a^{-}\|_{L^{1}}\,\max_{0\leq s \leq R}g(s)}{\|v\|_{L^{1}}}.
\end{equation*}
We verify that for
$\alpha = \alpha_{0}$ there are no $T$-periodic solutions $u(t)$ of \eqref{BVP-3.5} with
$0\leq u(t) \leq R$ on $\mathopen{[}0,T\mathclose{]}$. Indeed, if there were, integrating on $\mathopen{[}0,T\mathclose{]}$ the differential equation
and using the boundary conditions, we obtain
\begin{equation*}
\alpha \|v\|_{L^{1}} = \alpha \int_{0}^{T}v(t)~\!dt = -\int_{0}^{T}a_{\mu}(t)g(u(t))~\!dt \leq \mu \|a^{-}\|_{L^{1}}\,\max_{0\leq s \leq R}g(s),
\end{equation*}
which leads to a contradiction with respect to the choice of $\alpha_{0}$. Thus $(H^{R})$ is verified for all $R\geq R^{*}$.
\qed

\subsection{Checking the assumptions of Lemma~\ref{lem-deg0} for $\mu$ large}\label{section-4.4}

Let $\mathcal{I}\subsetneq\{1,\ldots,m\}$ be a nonempty subset of indices and let $r>0$ be as in Section~\ref{section-4.2},
in particular $r$ satisfies \eqref{cond-etar}. Set $d=r$, $\mathcal{J}:=\{1,\ldots,m\}\setminus\mathcal{I}$ and
let $v\in L^{1}(\mathopen{[}0,T\mathclose{]})$ be an arbitrary nontrivial function satisfying \eqref{eq-v-I}.
For example, as $v(t)$ we can take the characteristic function of the set $\bigcup_{i\in\mathcal{I}} I^{+}_{i}$.

In this section we verify that $(A_{\mathcal{J},r})$, $(B_{\mathcal{J},r})$ and $(C_{\mathcal{J},r})$ (of Lemma~\ref{lem-deg0}) hold for $\mu$ sufficiently large.

\medskip
\noindent
\textit{Verification of $(A_{\mathcal{J},r})$. }
Let $\alpha \geq 0$. We claim that there exists $\mu^{*}_{\mathcal{I}} > 0$ such that for $\mu>\mu^{*}_{\mathcal{I}}$
any non-negative $T$-periodic solution $u$ of \eqref{BVP-3.5}, or equivalently of
\begin{equation}\label{eq-equiv}
\bigl{(}e^{ct} u'\bigr{)}' + e^{ct} \bigl{(}a_{\mu}(t)g(u)+\alpha v(t)\bigr{)} = 0,
\end{equation}
is such that $\max_{t\in I^{+}_{i}}u(t)\neq r$, for all $i\notin\mathcal{I}$.

By contradiction, suppose that there is a solution $u(t)$ of \eqref{BVP-3.5} with
\begin{equation}\label{eq-maxr}
\max_{t\in I^{+}_{j}} u(t) = r, \quad \text{ for some index } \, j\in \mathcal{J}.
\end{equation}
Let $\hat{t}_{j}\in I^{+}_{j}=\mathopen{[}\sigma_{j},\tau_{j}\mathclose{]}$ be such that $u(\hat{t}_{j})=r$.
If $\hat{t}_{j}=\tau_{j}$, then clearly $u(\tau_{j})=r$ and $u'(\tau_{j})\geq0$.
If $\hat{t}_{j}=\sigma_{j}$, then $u(\sigma_{j})=r$ and $u'(\sigma_{j})\leq0$.
Suppose now that $\sigma_{j} < \hat{t}_{j} < \tau_{j}$.
By conditions $(a_{*})$ and \eqref{eq-v-I}, the solution $u(t)$ satisfies the following
initial value problem on $I^{+}_{j}$
\begin{equation*}
\begin{cases}
\, \bigl{(}e^{ct} u'\bigr{)}' + e^{ct} a^{+}(t)g(u) = 0 \\
\, u(\hat{t}_{j})=r \\
\, u'(\hat{t}_{j})=0.
\end{cases}
\end{equation*}
Then, we have
\begin{equation*}
u'(t) = - \int_{\hat{t}_{j}}^{t} e^{c(\xi-t)} a^{+}(\xi) g(u(\xi)) ~\!d\xi, \quad \forall \, t\in I^{+}_{j},
\end{equation*}
and hence, recalling \eqref{eq-Ki} and \eqref{cond-etar}, we obtain this a priori bound for $|u'(t)|$ on $I^{+}_{j}$:
\begin{equation*}
|u'(t)| \leq e^{|c||I^{+}_{j}|} \|a^{+}\|_{L^{1}(I^{+}_{j})} \eta(r) r \leq K_{j} \dfrac{r}{2 K_{j}|I^{+}_{j}| } = \dfrac{r}{2|I^{+}_{j}|}, \quad \forall \, t\in I^{+}_{j}.
\end{equation*}
Therefore, the following inequality holds
\begin{equation*}
u(\tau_{j}) = u(\hat{t}_{j}) + \int_{\hat{t}_{j}}^{\tau_{j}} u'(t) ~\!dt \geq r - |I^{+}_{j}| \dfrac{r}{2|I^{+}_{j}|} = \dfrac{r}{2}.
\end{equation*}
Thus we have a lower bound for $u(\tau_{j})$.

As a first case, suppose that $\sigma_{j} < \hat{t}_{j} \leq \tau_{j}$.
Above, we have proved that
\begin{equation}\label{eq-ineq}
u(\tau_{j})\geq \dfrac{r}{2} \quad \text{ and } \quad u'(\tau_{j})\geq -\dfrac{r}{2|I^{+}_{j}|}
\end{equation}
(this is also true in a trivial manner if $\hat{t}_{j} = \tau_{j}$).
By the initial convention in Section~\ref{section-2} about the selection of the points $\sigma_{i}$ and $\tau_{i}$
in order to separate the intervals of positivity and negativity, we know that $a(t)\prec0$ on every right
neighborhood of $\tau_{j}$. Accordingly, we can fix $\delta^{+}_{j}>0$, with $0 < \delta^{+}_{j} < \sigma_{j+1}-\tau_{j}$, such that
\begin{equation}\label{eq-deltaj}
\delta^{+}_{j} \, e^{|c|\delta^{+}_{j}} < \dfrac{|I^{+}_{j}|}{2}
\end{equation}
and $a(t)\prec0$ on $\mathopen{[}\tau_{j},\tau_{j}+\delta^{+}_{j}\mathclose{]}\subseteq I^{-}_{j}$.
Since $t\mapsto e^{ct}u'(t)$ is non-decreasing on $I^{-}_{j}$, we have
\begin{equation*}
u'(t) \geq e^{c(\tau_{j}-t)} u'(\tau_{j}) \geq -e^{|c|\delta^{+}_{j}}\dfrac{r}{2|I^{+}_{j}|}, \quad \forall \, t \in\mathopen{[}\tau_{j},\tau_{j}+\delta^{+}_{j}\mathclose{]},
\end{equation*}
then
\begin{equation*}
u(t) = u(\tau_{j}) + \int_{\tau_{j}}^{t} u'(\xi) ~\!d\xi \geq \dfrac{r}{2} - \delta^{+}_{j} e^{|c|\delta^{+}_{j}}\dfrac{r}{2|I^{+}_{j}|} > \dfrac{r}{4},
 \quad \forall \, t \in\mathopen{[}\tau_{j},\tau_{j}+\delta^{+}_{j}\mathclose{]}.
\end{equation*}
We deduce that
\begin{equation*}
\dfrac{r}{4} < u(t) \leq R^{*} \quad \text{on } \mathopen{[}\tau_{j},\tau_{j}+\delta^{+}_{j}\mathclose{]},
\end{equation*}
where $R^{*}$ is the upper bound defined in Section~\ref{section-4.3}.

Let us fix
\begin{equation}\label{eq-gamma}
\gamma := \min_{\frac{r}{4} \leq s \leq R^{*}} g(s) > 0.
\end{equation}
We prove that for $\mu>0$ sufficiently large $\max_{t\in\mathopen{[}\tau_{j},\tau_{j}+\delta^{+}_{j}\mathclose{]}}u(t)>R^{*}$
(which is a contradiction to the upper bound for $u(t)$).

Note that for $t\in\mathopen{[}\tau_{j},\tau_{j}+\delta^{+}_{j}\mathclose{]} \subseteq I^{-}_{j}$, equation
\eqref{eq-equiv} reads as
\begin{equation*}
\bigl{(}e^{ct} u'(t)\bigr{)}' = \mu e^{ct} a^{-}(t)g(u(t)).
\end{equation*}
Hence, for all $t\in \mathopen{[}\tau_{j},\tau_{j}+\delta^{+}_{j}\mathclose{]}$ we have
\begin{equation*}
\begin{aligned}
u'(t) & = e^{c(\tau_{j}-t)} u'(\tau_{j})+\int_{\tau_{j}}^{t}\mu e^{c(\xi-t)} a^{-}(\xi)g(u(\xi))~\!d\xi
\\    & \geq -e^{|c|\delta^{+}_{j}}\dfrac{r}{2|I^{+}_{j}|} + \mu e^{-|c|\delta^{+}_{j}} \gamma \int_{\tau_{j}}^{t} a^{-}(\xi)~\!d\xi,
\end{aligned}
\end{equation*}
then
\begin{equation*}
\begin{aligned}
u(t) & = u(\tau_{j})+\int_{\tau_{j}}^{t}u'(\xi)~\!d\xi
\\ &  \geq \dfrac{r}{2}-\delta^{+}_{j}e^{|c|\delta^{+}_{j}}\dfrac{r}{2|I^{+}_{j}|}+ \mu e^{-|c|\delta^{+}_{j}} \gamma \int_{\tau_{j}}^{t}\biggr(\int_{\tau_{j}}^{s} a^{-}(\xi)~\!d\xi\biggr) ~\!ds
\\ & > \dfrac{r}{4}+ \mu e^{-|c|\delta^{+}_{j}} \gamma \int_{\tau_{j}}^{t}\biggr(\int_{\tau_{j}}^{s} a^{-}(\xi)~\!d\xi\biggr) ~\!ds.
\end{aligned}
\end{equation*}
Therefore, for $t=\tau_{j}+\delta^{+}_{j}$ we get
\begin{equation*}
R^{*} \geq u(\tau_{j}+\delta^{+}_{j} ) \geq \mu e^{-|c|\delta^{+}_{j}} \gamma \int_{\tau_{j}}^{\tau_{j}+\delta^{+}_{j} }\biggl(\int_{\tau_{j}}^{s} a^{-}(\xi)~\!d\xi \biggr) ~\!ds.
\end{equation*}
This gives a contradiction if $\mu$ is sufficiently large, say
\begin{equation}\label{eq-mu+}
\mu > \mu_{j}^{+} := \dfrac{R^{*} \, e^{|c|\delta^{+}_{j}}}{\gamma \int_{\tau_{j}}^{\tau_{j}+\delta^{+}_{j} }\bigl{(}\int_{\tau_{j}}^{s} a^{-}(\xi)~\!d\xi \bigr{)} ~\!ds},
\end{equation}
recalling that $\int_{\tau_{j}}^{t} a^{-}(\xi) ~\!d\xi > 0$ for each $t\in \mathopen{]}\tau_{j},\sigma_{j+1}\mathclose{]}$.

As a second case, if $\hat{t}_{j} = \sigma_{j}$, we consider the interval $I^{-}_{j-1}$ (if $j=1$, we deal with $I^{-}_{m}-T$, by $T$-periodicity).
We define $\delta^{-}_{j}$ in a similar manner, using the fact that $a(t)$ is not identically zero on all left neighborhoods of $\sigma_{j}$.
We obtain a contradiction for
\begin{equation}\label{eq-mu-}
\mu > \mu_{j}^{-} := \dfrac{R^{*} \, e^{|c|\delta^{-}_{j}}}{\gamma \int_{\sigma_{j}-\delta^{-}_{j}}^{\sigma_{j}}\bigl{(}\int_{s}^{\sigma_{j}} a^{-}(\xi)~\!d\xi \bigr{)} ~\!ds}.
\end{equation}

At the end, we define
\begin{equation*}
\mu^{*}_{\mathcal{I}} := \max_{j\notin \mathcal{I}} \mu^{\pm}_{j}
\end{equation*}
and we obtain a contradiction if $\mu > \mu^{*}_{\mathcal{I}}$.
Hence condition $(A_{\mathcal{J},r})$ is verified.

\begin{remark}\label{rem-4.9}
We emphasize the fact that the constant $\mu^{*}_{\mathcal{I}}$ is chosen independently on the solution $u(t)$
for which we have made the estimates. In fact, the numbers $\mu^{\pm}_{j}$ depend only on absolute constants, like $R^{*}$, $\gamma$
(depending only on $g(s)$), the constants $\delta^{\pm}_{j}$ defined as in \eqref{eq-deltaj} and, finally,
the integrals of the negative part of the weight function. Observe also that, in order to have non-vanishing denominators
in the definition of the $\mu^{\pm}_{j}$, we had to be sure that $a(t) \prec 0$ on each right neighborhood of $\tau_{j}$
as well as on each left neighborhood of $\sigma_{j}$, consistently with the choice we made at the beginning.
$\hfill\lhd$
\end{remark}

\begin{remark}\label{rem-4.10}
A careful reading of the above proof shows that the result does not involve the periodicity of the function $u(t)$,
since we have only analyzed the behavior of the solution on an interval of positivity of the weight and
on the adjacent intervals of negativity. Indeed, we claim that the following result holds.
\begin{quote}
\textit{There exists a constant $\mu^{*}>0$ such that, for every $\mu > \mu^{*}$, any non-negative solution $w(t)$ of
\eqref{eq-2.1} (not necessarily periodic), with $w(t) < R^{*}$ for all $t\in \mathbb{R}$, is such that
\begin{equation*}
\max \bigl{\{} w(t) \colon t\in I^{+}_{i} + \ell T \bigr{\}} \neq r, \quad \forall \, i\in\{1,\ldots,m\}, \; \forall \, \ell\in \mathbb{Z}.
\end{equation*}
}
\end{quote}
To check this assertion, suppose by contradiction that there exist $i\in\{1,\ldots,m\}$ and $\ell\in \mathbb{Z}$
such that $\max \bigl{\{} w(t) \colon t\in I^{+}_{i} + \ell T \bigr{\}} = r$.
Thanks to the $T$-periodicity of the weight coefficient $a_{\mu}(t)$, the function $u(t):= w(t+\ell T)$ is still a (non-negative) solution of \eqref{eq-2.1} with
$\max_{t\in I^{+}_{i}} u(t) = r$. So, we are in the same situation like at the beginning of the \textit{Verification of $(A_{\mathcal{J},r})$}
(cf.~\eqref{eq-maxr}). From now on, we proceed exactly the same as in that proof and obtain a contradiction
with respect to the bound $u(t) < R^{*}$ (for all $t$) taking $\mu > \max \{\mu^{+}_{i},\mu^{-}_{i}\}$.
Hence the result is proved for $\mu$ sufficiently large, namely $\mu > \max_{i} \mu^{\pm}_{i}$.
$\hfill\lhd$
\end{remark}

\medskip
\noindent
Now, we fix $\mu>\mu^{*}_{\mathcal{I}}$ and we prove that conditions  $(B_{\mathcal{J},r})$ and $(C_{\mathcal{J},r})$ hold
(independently of the coefficient $\mu$ previously fixed).

\medskip
\noindent
\textit{Verification of $(B_{\mathcal{J},r})$. }
Let $u(t)$ be any non-negative $T$-periodic solution of \eqref{BVP-3.5} with $\max_{t\in I^{+}_{j}} u(t) \leq r$, for all $j\in\mathcal{J}$.
Notice that $R^{*}$ is an upper bound for all the solutions of \eqref{BVP-3.5} and $R^{*}$ is independent on the functions
$v\in L^{1}(\mathopen{[}0,T\mathclose{]})$ satisfying \eqref{eq-v} (and hence \eqref{eq-v-I}).
So condition $(B_{\mathcal{J},r})$ is verified with $D_{\beta}=R^{*}$, for every $\beta \geq 0$.

\medskip
\noindent
\textit{Verification of $(C_{\mathcal{J},r})$. }
Recalling the choice of $v(t)$ in \eqref{eq-v-I}, we take an index $i\in \mathcal{I} = \{1,\ldots,m\} \setminus {\mathcal{J}}$
such that $v\not\equiv 0$ on $I^{+}_{i}$ and we also fix $\varepsilon>0$ such that
\begin{equation*}
v(t)\not\equiv 0 \quad \text{on } \mathopen{[}\sigma_{i}+\varepsilon,\tau_{i}- \varepsilon\mathclose{]}.
\end{equation*}
We claim that $(C_{\mathcal{J},r})$ is satisfied for $\alpha_{0}$ such that
\begin{equation*}
\alpha_{0} > \dfrac{R^{*}}{\int_{\sigma_{i}+\varepsilon}^{\tau_{i}+\varepsilon}v(t) ~\!dt} \biggl{(}\dfrac{2 e^{|c|T}}{\varepsilon} + |c| \biggr{)}.
\end{equation*}

To prove our assertion, first of all we observe that if $u(t)$ is any non-negative solution of \eqref{BVP-3.5}, then
\begin{equation}\label{eq-eps}
|u'(t)| \leq u(t) \dfrac{e^{|c|T}}{\varepsilon}, \quad \forall\, t\in \mathopen{[}\sigma_{i}+\varepsilon,\tau_{i} - \varepsilon\mathclose{]}.
\end{equation}
Such inequality can be found in \cite[\S~3.1]{BoFeZa-pp2015} and has been already proved and used in Section~\ref{section-4.3}
(see the inequalities in \eqref{eq-epsa}).

Let $u(t)$ be a non-negative $T$-periodic solution of \eqref{BVP-3.5}, which reads as
\begin{equation*}
\alpha v(t) = - u'' - c u' - a^{+}(t) g(u)
\end{equation*}
on the interval $I^{+}_{i}$. Recall also that $\|u\|_{\infty} < R^{*}$ (cf.~\eqref{upperbound}).
Integrating the equation on $\mathopen{[}\sigma_{i}+\varepsilon,\tau_{i} - \varepsilon\mathclose{]}$, for $\alpha=\alpha_{0}$, we obtain
\begin{equation*}
\begin{aligned}
&\alpha_{0} \int_{\sigma_{i}+\varepsilon}^{\tau_{i}- \varepsilon}v(t) ~\!dt =
\\ &= u'(\sigma_{i}+\varepsilon) - u'(\tau_{i}- \varepsilon) + c \bigl{(} u(\sigma_{i}+\varepsilon)
    - u(\tau_{i}- \varepsilon) \bigr{)} - \int_{\sigma_{i}+\varepsilon}^{\tau_{i} - \varepsilon} a^{+}(t)g(u(t))~\!dt
\\ &\leq 2 \dfrac{R^{*}}{\varepsilon} e^{|c|T} + |c|R^{*} < \alpha_{0} \int_{\sigma_{i}+\varepsilon}^{\tau_{i}- \varepsilon}v(t) ~\!dt,
\end{aligned}
\end{equation*}
a contradiction. Hence $(C_{\mathcal{J},r})$ is verified.

\begin{remark}\label{rem-4.11}
Note that for the verification of $(B_{\mathcal{J},r})$ and $(C_{\mathcal{J},r})$ the small constant $r$ has no played any relevant role.
In fact, we used only the information about the existence of the a priori bound $R^{*}$ obtained in Section~\ref{section-4.3}.
$\hfill\lhd$
\end{remark}

\medskip
\noindent
In conclusion, all the assumptions of Lemma~\ref{lem-deg0} have been verified for a fixed $r$ and for
$\mu > \mu^{*}_{\mathcal{I}}$.
\qed

\subsection{A posteriori bounds}\label{section-4.5}

Let $\mathcal{I}\subseteq\{1,\ldots,m\}$ be a nonempty subset of indices and let $r$ and $R$ be fixed as explained in Section~\ref{section-4.1}.
Theorem~\ref{MainTheorem} ensures the existence of at least a $T$-periodic positive solution $u(t)$ of \eqref{eq-2.1} with $u\in\Lambda^{\mathcal{I}}_{r,R}$.
More in details, the solution $u(t)$ is such that $r<u(\hat{t}_{i})<R$ for some $\hat{t}_{i}\in I^{+}_{i}$, if $i\in\mathcal{I}$, and
$0<u(t)<r$ for all $t\in I^{+}_{i}$, if $i\notin\mathcal{I}$.

As premised in Remark~\ref{rem-4.1}, in this section we prove that, for $\mu$ sufficiently large, it holds that $0<u(t)<r$ also on the non-positivity intervals $I^{-}_{i}$.
First of all, by \eqref{eq-max}, we observe that the solution $u(t)$ in the interval of non-positivity attains its maximum at an end-point.
Therefore it is sufficient to show that
\begin{equation*}
u(\sigma_{i}) < r \quad \text{ and } \quad u(\tau_{i}) < r, \quad \text{ for all } \, i=1,\ldots,m.
\end{equation*}
If $i\notin\mathcal{I}$, there is nothing to prove, because $u(t) < r$ on $I^{+}_{i}=\mathopen{[}\sigma_{i},\tau_{i}\mathclose{]}$.
Let us deal with the case $i\in\mathcal{I}$ and, by contradiction, suppose that
\begin{equation*}
u(\tau_{i}) \geq r.
\end{equation*}
Proceeding as in Section~\ref{section-4.4}, one can prove that
\begin{equation*}
u'(\tau_{i}) \geq - K_{i} \max_{0\leq s \leq R^{*}} g(s)
\end{equation*}
and hence (using estimates analogous to those following after \eqref{eq-ineq}) that there exists $\delta^{+}_{i}>0$ such that, for $t=\tau_{i}+\delta^{+}_{i}\in I^{-}_{i}$ and $\mu$ sufficiently large, we obtain
\begin{equation*}
u(\tau_{i}+\delta^{+}_{i}) \geq R^{*},
\end{equation*}
a contradiction.

A similar argument generates a contradiction also assuming $u(\sigma_{i}) \geq r$, for $i\in\mathcal{I}$.

\medskip

Finally, repeating again the argument in Section~\ref{section-4.4}, we can also check that if $u(t) = u_{\mu}(t)$
is a positive $T$-periodic solution of \eqref{eq-2.1} (belonging to a set of the form $\Lambda^{\mathcal{I}}_{r,R}$), then, for $\mu\to +\infty$,
$u_{\mu}$ tends uniformly to zero on the intervals $I^{-}_{i}$.

\begin{remark}\label{rem-4.12}
Notice that the same arguments work for any \textit{arbitrary} non-negative solution which is upper bounded by $R^{*}$
(at any effect this observation is analogous to Remark~\ref{rem-4.10}, since it only involves the behavior of the solution
in the intervals where the weight is negative, without requiring the periodicity of the solution).
Indeed, the following result holds.
\begin{quote}
\textit{There exists a constant $\mu^{**}>0$ such that, for every $\mu > \mu^{**}$, any non-negative solution $w(t)$ of
\eqref{eq-2.1} (not necessarily periodic), with $w(t) < R^{*}$ for all $t\in \mathbb{R}$, is such that
\begin{equation*}
\max \bigl{\{} w(t) \colon t\in I^{-}_{i} + \ell T \bigr{\}} < r, \quad \forall \, i\in\{1,\ldots,m\}, \; \forall \, \ell\in \mathbb{Z}.
\end{equation*}
}
\end{quote}
To check this assertion, suppose by contradiction that there exist $i\in\{1,\ldots,m\}$ and $\ell\in \mathbb{Z}$
such that $\max \bigl{\{} w(t) \colon t\in I^{-}_{i} + \ell T \bigr{\}} \geq r$.
Thanks to the $T$-periodicity of the weight coefficient $a_{\mu}(t)$, the function $u(t):= w(t+\ell T)$ is still a (non-negative) solution of \eqref{eq-2.1} with
$\max_{t\in I^{-}_{i}} u(t) \geq r$. This means that $u(\sigma_{i}) \geq r$ or $u(\tau_{i})\geq r$.
At this point we achieve a contradiction exactly as above.
$\hfill\lhd$
\end{remark}

\section{Related results}\label{section-5}

In this section we deal with corollaries, variants and applications of Theorem~\ref{MainTheorem}.
We also analyze the case of a nonlinearity $g(s)$ which is smooth in order to give a nonexistence result, too.

The following corollaries are obtained as direct applications of Theorem~\ref{MainTheorem}.

\begin{corollary}\label{cor-5.1}
Let $g \colon {\mathbb{R}}^{+} \to {\mathbb{R}}^{+}$ be a continuous function satisfying $(g_{*})$,
\begin{equation*}
g_{0} = 0 \quad \text{ and } \quad g_{\infty} > 0.
\end{equation*}
Let $a \colon \mathbb{R} \to \mathbb{R}$ be a $T$-periodic locally integrable function satisfying $(a_{*})$.
Then there exists $\nu^{*}>0$ such that for all $\nu>\nu^{*}$ there exists $\mu^{*}=\mu^{*}(\nu)$
such that for $\mu>\mu^{*}$ there exist at least $2^{m}-1$ positive $T$-periodic solutions of
\begin{equation}\label{eq-5.1}
u'' + c u' + \bigl{(} \nu a^{+}(t) - \mu a^{-}(t)\bigr{)} g(u) = 0.
\end{equation}
\end{corollary}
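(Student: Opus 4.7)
The plan is to reduce Corollary~\ref{cor-5.1} to Theorem~\ref{MainTheorem} by absorbing the parameter $\nu$ into a rescaled weight. The key observation is that while $g_\infty$ in Corollary~\ref{cor-5.1} is only assumed to be positive (not necessarily larger than $\max_i \lambda_1^i$), multiplying the positive part of the weight by $\nu$ has the effect of scaling the relevant first eigenvalues by $1/\nu$, so that choosing $\nu$ large enough makes the eigenvalue condition of Theorem~\ref{MainTheorem} satisfied.

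More precisely, I would first introduce the auxiliary weight
\begin{equation*}
\tilde{a}(t) := \nu \, a^{+}(t) - a^{-}(t),
\end{equation*}
and observe that $\tilde{a}^{+}(t) = \nu \, a^{+}(t)$ and $\tilde{a}^{-}(t) = a^{-}(t)$, so that
\begin{equation*}
\nu \, a^{+}(t) - \mu \, a^{-}(t) = \tilde{a}^{+}(t) - \mu \, \tilde{a}^{-}(t).
\end{equation*}
Since the supports of $a^{+}$ and $a^{-}$ are unchanged (only rescaled), the function $\tilde{a}(t)$ is still $T$-periodic, locally integrable and satisfies $(a_{*})$ with exactly the same intervals $I^{+}_{i}$ and $I^{-}_{i}$. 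Thus equation \eqref{eq-5.1} has the form considered in Theorem~\ref{MainTheorem}, with $\tilde{a}$ in place of $a$ and $\mu$ as parameter.

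Next I would compute how the relevant first eigenvalues depend on $\nu$. Denote by $\tilde{\lambda}_{1}^{i}$ the first eigenvalue of
\begin{equation*}
\varphi'' + c \varphi' + \lambda \, \tilde{a}(t) \varphi = 0, \qquad \varphi|_{\partial I^{+}_{i}} = 0.
\end{equation*}
On $I^{+}_{i}$ we have $\tilde{a}(t) = \nu \, a(t)$, hence by homogeneity $\tilde{\lambda}_{1}^{i} = \lambda_{1}^{i} / \nu$ for every $i = 1,\ldots,m$. Therefore the hypothesis $g_{\infty} > \max_{i} \tilde{\lambda}_{1}^{i}$ required by Theorem~\ref{MainTheorem} becomes
\begin{equation*}
g_{\infty} > \frac{1}{\nu} \max_{i=1,\ldots,m} \lambda_{1}^{i},
\end{equation*}
which is verified as soon as
\begin{equation*}
\nu > \nu^{*} := \frac{1}{g_{\infty}} \max_{i=1,\ldots,m} \lambda_{1}^{i}.
\end{equation*}

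Finally, for every fixed $\nu > \nu^{*}$, Theorem~\ref{MainTheorem} applied to the weight $\tilde{a}(t)$ produces a constant $\mu^{*} = \mu^{*}(\nu)$ (depending on $\nu$ through the shape of $\tilde{a}$) such that, for each $\mu > \mu^{*}$, equation \eqref{eq-5.1} has at least $2^{m}-1$ positive $T$-periodic solutions, which is the desired conclusion. There is essentially no technical obstacle: the only subtle point is verifying the scaling of $\lambda_{1}^{i}$ with $\nu$, and this is immediate from the fact that replacing the weight by a positive multiple of itself in a linear Sturm--Liouville eigenvalue problem divides the eigenvalues by the same multiple. The remaining $\nu$-dependence is harmless because $\mu^{*}$ is allowed to depend on $\nu$.
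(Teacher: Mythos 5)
Your proof is correct and follows exactly the route the paper intends: the paper states that Corollary~\ref{cor-5.1} is a direct application of Theorem~\ref{MainTheorem} with $\nu^{*}$ chosen so that $\nu^{*} g_{\infty} > \max_{i} \lambda_{1}^{i}$, which is precisely your rescaling of the weight and the observation that the first eigenvalues scale as $\lambda_{1}^{i}/\nu$. Your write-up simply makes explicit the verification of $(a_{*})$ for $\tilde{a}$ and the homogeneity of the eigenvalue problem, which the paper leaves implicit.
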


The constant $\nu^{*}$ will be chosen so that $\nu^{*} g_{\infty} > \max_{i} \lambda_{1}^{i}$.
The lower bound for $g_{\infty}$ in the main theorem is automatically satisfied when
$g_{\infty} = +\infty$. Accordingly, we have.

\begin{corollary}\label{cor-5.2}
Let $g \colon {\mathbb{R}}^{+} \to {\mathbb{R}}^{+}$ be a continuous function satisfying $(g_{*})$,
\begin{equation*}
g_{0} = 0 \quad \text{ and } \quad g_{\infty} = +\infty.
\end{equation*}
Let $a \colon \mathbb{R} \to \mathbb{R}$ be a $T$-periodic locally integrable function satisfying $(a_{*})$.
Then there exists $\mu^{*}>0$ such that for all $\mu>\mu^{*}$ equation \eqref{eq-2.1} has at least $2^{m}-1$ positive $T$-periodic solutions.
\end{corollary}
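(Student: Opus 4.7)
The plan is to derive Corollary~\ref{cor-5.2} as an immediate consequence of Theorem~\ref{MainTheorem}. Indeed, Theorem~\ref{MainTheorem} requires exactly three ingredients on the nonlinearity $g$: continuity together with $(g_{*})$, the superlinearity at zero $g_{0}=0$, and the quantitative lower bound at infinity $g_{\infty} > \max_{i=1,\ldots,m} \lambda_{1}^{i}$. The first two are assumed verbatim in the hypotheses of the corollary, and the hypothesis $(a_{*})$ on the weight is also directly inherited.

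The only point that merits explicit verification is the lower bound on $g_{\infty}$. Recall that the eigenvalues $\lambda_{1}^{i}$ are defined in Section~\ref{section-2} as the first (positive) eigenvalues of the Dirichlet problems
\begin{equation*}
\varphi'' + c\varphi' + \lambda a(t) \varphi = 0, \quad \varphi|_{\partial I^{+}_{i}} = 0,
\end{equation*}
on the compact intervals $I^{+}_{i}$, where $a(t) \succ 0$. In particular, each $\lambda_{1}^{i}$ is a finite positive real number, so the maximum $\max_{i=1,\ldots,m}\lambda_{1}^{i}$ is a finite real constant. Under the standing assumption $g_{\infty} = +\infty$, the inequality
\begin{equation*}
g_{\infty} > \max_{i=1,\ldots,m}\lambda_{1}^{i}
\end{equation*}
holds trivially, with no restriction on the structure of $g$ beyond what was already supposed.

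Consequently, Theorem~\ref{MainTheorem} applies and furnishes a constant $\mu^{*}>0$ such that for every $\mu>\mu^{*}$ equation \eqref{eq-2.1} admits at least $2^{m}-1$ positive $T$-periodic solutions, precisely as claimed. There is no genuine obstacle in this argument: the whole content of the corollary lies in recognizing that the abstract lower bound on $g_{\infty}$ is automatically available in the limiting case $g_{\infty}=+\infty$, so that the classical superlinear prototype $g(s) = s^{p}$ with $p > 1$ (for which $g_{0}=0$ and $g_{\infty}=+\infty$) falls within the scope of the main result without any further hypothesis.
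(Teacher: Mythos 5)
Your proposal is correct and coincides with the paper's own (very short) justification: Corollary~\ref{cor-5.2} is obtained as a direct application of Theorem~\ref{MainTheorem}, observing that the finite lower bound $g_{\infty} > \max_{i=1,\ldots,m}\lambda_{1}^{i}$ is automatically satisfied when $g_{\infty}=+\infty$. Nothing further is needed.
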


A typical case in which the above corollary applies is for the power nonlinearity $g(s) = s^{p}$ (for $p > 1$), so that the next result holds.

\begin{corollary}\label{cor-5.3}
Let $a \colon \mathbb{R} \to \mathbb{R}$ be a $T$-periodic locally integrable function satisfying $(a_{*})$.
Then there exists $\mu^{*}>0$ such that for all $\mu>\mu^{*}$ there exist at least $2^{m}-1$ positive $T$-periodic solutions of
\begin{equation*}
u'' + c u' + \bigl{(} a^{+}(t) - \mu a^{-}(t)\bigr{)} u^{p} = 0, \quad p > 1.
\end{equation*}
\end{corollary}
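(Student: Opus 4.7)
The plan is to deduce Corollary~\ref{cor-5.3} as a direct specialization of Corollary~\ref{cor-5.2} by taking $g(s) = s^{p}$ with $p>1$. Since the weight $a(t)$ already satisfies $(a_{*})$ by hypothesis, the only work required is to verify that the power nonlinearity fits into the abstract framework of the preceding corollary, after which the conclusion on the existence of $2^{m}-1$ positive $T$-periodic solutions is automatic.

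The first step is to check $(g_{*})$: the function $g(s) = s^{p}$ is continuous on $\mathbb{R}^{+}$, it vanishes at $s=0$, and $g(s) = s^{p} > 0$ for $s>0$, so $(g_{*})$ holds. Next, I would compute the limits at $0$ and at infinity. Since $p > 1$,
\begin{equation*}
g_{0} = \lim_{s \to 0^{+}} \dfrac{s^{p}}{s} = \lim_{s\to 0^{+}} s^{p-1} = 0,
\end{equation*}
and
\begin{equation*}
g_{\infty} = \liminf_{s \to +\infty} \dfrac{s^{p}}{s} = \lim_{s \to +\infty} s^{p-1} = +\infty.
\end{equation*}
Thus both conditions required by Corollary~\ref{cor-5.2}, namely $g_{0}=0$ and $g_{\infty}=+\infty$, are satisfied.

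Having checked every hypothesis, the last step is to invoke Corollary~\ref{cor-5.2}, which delivers a constant $\mu^{*} > 0$ such that for every $\mu > \mu^{*}$ the equation $u'' + c u' + (a^{+}(t) - \mu a^{-}(t)) g(u) = 0$ has at least $2^{m}-1$ positive $T$-periodic solutions. Substituting the specific choice $g(s) = s^{p}$ into the equation yields exactly the statement of Corollary~\ref{cor-5.3}. There is no genuine obstacle here: the whole point of isolating Corollary~\ref{cor-5.2} (and earlier Corollary~\ref{cor-5.1}) from Theorem~\ref{MainTheorem} was precisely to make the power nonlinearity fall out without any additional estimate, so the corollary reduces to the checklist verification above.
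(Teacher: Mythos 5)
Your proposal is correct and is exactly the paper's route: the paper introduces Corollary~\ref{cor-5.3} as the specialization of Corollary~\ref{cor-5.2} to $g(s)=s^{p}$ with $p>1$, and the only content is the verification of $(g_{*})$, $g_{0}=0$ and $g_{\infty}=+\infty$, which you carry out correctly.
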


Using Remark~\ref{rem-4.2}, we can also obtain the following result which, in some sense, is dual with respect to
Corollary~\ref{cor-5.1}.

\begin{corollary}\label{cor-5.4}
Let $g \colon {\mathbb{R}}^{+} \to {\mathbb{R}}^{+}$ be a continuous function satisfying $(g_{*})$,
\begin{equation*}
g_{0} > 0 \quad \text{ and } \quad g_{\infty} = +\infty.
\end{equation*}
Let $a \colon \mathbb{R} \to \mathbb{R}$ be a $T$-periodic locally integrable function satisfying $(a_{*})$.
Then there exists $\nu_{*}>0$ such that for all $0 < \nu <\nu_{*}$ there exists $\mu^{*}=\mu^{*}(\nu)$
such that for $\mu>\mu^{*}$ there exist at least $2^{m}-1$ positive $T$-periodic solutions of
equation \eqref{eq-5.1}.
\end{corollary}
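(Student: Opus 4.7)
The strategy is to reduce Corollary~\ref{cor-5.4} to Theorem~\ref{MainTheorem} (combined with Remark~\ref{rem-4.2}) by a simple rescaling of the nonlinearity. Indeed, given $\nu>0$, set
\begin{equation*}
\tilde{g}(s) := \nu g(s), \qquad \tilde{\mu} := \mu/\nu.
\end{equation*}
Then equation \eqref{eq-5.1} can be rewritten in the equivalent form
\begin{equation*}
u'' + c u' + \bigl{(} a^{+}(t) - \tilde{\mu}\, a^{-}(t)\bigr{)} \tilde{g}(u) = 0,
\end{equation*}
which is exactly the equation treated by Theorem~\ref{MainTheorem}, with the \emph{same} weight $a(t)$ (hence the same $(a_{*})$-structure and the same intervals $I^{+}_{i}$, $I^{-}_{i}$) and the modified nonlinearity $\tilde{g}$. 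Note that $\tilde{g}$ inherits $(g_{*})$ from $g$, while
\begin{equation*}
\tilde{g}_{0} = \nu\, g_{0}, \qquad \tilde{g}_{\infty} = \nu\, g_{\infty} = +\infty.
\end{equation*}

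The condition $\tilde{g}_{\infty} > \max_{i} \lambda_{1}^{i}$ is therefore automatic. For the condition at zero, we invoke the sharpened version of Theorem~\ref{MainTheorem} described in Remark~\ref{rem-4.2}, which only requires $\tilde{g}_{0} < \lambda_{*}$ for a positive constant $\lambda_{*}$ (for instance $\lambda_{*} = 1/K_{0}$ with $K_{0}$ as in \eqref{eq-K0}) depending only on $c$ and $a^{+}$, not on $\mu$ or $\nu$. Thus, defining
\begin{equation*}
\nu_{*} := \dfrac{\lambda_{*}}{g_{0}} > 0,
\end{equation*}
for every $\nu \in \mathopen{]}0,\nu_{*}\mathclose{[}$ we have $\tilde{g}_{0} = \nu g_{0} < \lambda_{*}$, so the hypotheses of the improved main theorem are satisfied by $\tilde{g}$.

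Applying Theorem~\ref{MainTheorem} (in the form of Remark~\ref{rem-4.2}) to $\tilde{g}$ yields a threshold $\tilde{\mu}^{*}(\nu) > 0$ such that, whenever $\tilde{\mu} > \tilde{\mu}^{*}(\nu)$, the rescaled equation admits at least $2^{m}-1$ positive $T$-periodic solutions. Setting
\begin{equation*}
\mu^{*}(\nu) := \nu\, \tilde{\mu}^{*}(\nu),
\end{equation*}
the condition $\mu > \mu^{*}(\nu)$ is equivalent to $\tilde{\mu} > \tilde{\mu}^{*}(\nu)$, which gives the claimed multiplicity for the original equation \eqref{eq-5.1}. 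The only genuine point to check is that the constant $\lambda_{*}$ of Remark~\ref{rem-4.2} is indeed $\nu$-independent; this is the case because the proof in Section~\ref{section-4.2} bounds $\lambda_{*}$ from below by $1/K_{0}$, which is determined solely by $a^{+}$, $c$ and the intervals $I^{\pm}_{i}$. No further argument is required, so I expect no significant obstacle beyond verifying that each constant entering Theorem~\ref{MainTheorem} survives the rescaling in the intended way.
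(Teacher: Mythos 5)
Your proposal is correct and follows exactly the route the paper intends: the paper derives Corollary~\ref{cor-5.4} by invoking Remark~\ref{rem-4.2} (the relaxation $g_{0}<\lambda_{*}$ with $\lambda_{*}\geq 1/K_{0}$ depending only on $a^{+}$, $c$ and the intervals $I^{\pm}_{i}$), which is precisely your rescaling $\tilde{g}=\nu g$, $\tilde{\mu}=\mu/\nu$ with $\nu_{*}=\lambda_{*}/g_{0}$ (well defined since the standing hypothesis $(g_{1})$ gives $g_{0}<+\infty$). No gap.
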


Combining Theorem~\ref{MainTheorem} with \cite[Lemma~4.1]{BoFeZa-pp2015} (or \cite[Proposition~3.1]{FeZa-2015ade}),
the following result can be obtained.

\begin{corollary}\label{cor-5.5}
Let $g \colon {\mathbb{R}}^{+} \to {\mathbb{R}}^{+}$ be a continuously differentiable function satisfying $(g_{*})$,
\begin{equation*}
g_{0} = 0 \quad \text{ and } \quad g_{\infty} > 0.
\end{equation*}
Let $a \colon \mathbb{R} \to \mathbb{R}$ be a $T$-periodic locally integrable function satisfying $(a_{*})$.
Then for all $\mu>0$ such that $\int_{0}^{T} a_{\mu}(t)~\!dt < 0$ there exist two constants $0 < \omega_{*} \leq \omega^{*}$
(depending on $\mu$) such that equation
\begin{equation}\label{eq-5.2}
u'' + c u' + \nu \bigl{(} a^{+}(t) - \mu a^{-}(t)\bigr{)} g(u) = 0
\end{equation}
has no positive $T$-periodic solutions for $0 < \nu < \omega_{*}$ and at least one positive $T$-periodic solution
for $\nu > \omega^{*}$.
Moreover there exists $\nu^{*} > 0$ such that for all $\nu >\nu^{*}$ there exists $\mu^{*}=\mu^{*}(\nu)$
such that for $\mu>\mu^{*}$ equation \eqref{eq-5.2} has at least $2^{m}-1$ positive $T$-periodic solutions.
\end{corollary}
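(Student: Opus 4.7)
The proof factors into three independent assertions. The strategy is to handle the multiplicity claim as a direct consequence of Corollary~\ref{cor-5.1}, and to establish the existence/nonexistence dichotomy for a single positive solution by combining the degree-theoretic machinery of Section~\ref{section-4.1} (applied with the rescaled nonlinearity $\nu g$) with a cited a priori bound result that exploits the $\mathcal{C}^{1}$-regularity of $g$.

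For the multiplicity portion, observe that equation \eqref{eq-5.2} is precisely equation \eqref{eq-5.1}. Since $g$ satisfies $(g_{*})$, $g_{0} = 0$ and $g_{\infty} > 0$, Corollary~\ref{cor-5.1} directly provides $\nu^{*} > 0$ and $\mu^{*} = \mu^{*}(\nu)$ yielding the $2^{m}-1$ positive $T$-periodic solutions, with no further argument required.

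For the existence of at least one positive $T$-periodic solution when $\nu > \omega^{*}$, fix $\mu > 0$ with $\int_{0}^{T} a_{\mu}(t)\,dt < 0$ and set $\omega^{*} := (\max_{i=1,\ldots,m} \lambda_{1}^{i})/g_{\infty}$. For $\nu > \omega^{*}$ the rescaled nonlinearity $\tilde{g}(s) := \nu g(s)$ satisfies $\tilde{g}_{0} = 0$ and $\tilde{g}_{\infty} > \max_{i} \lambda_{1}^{i}$, so the hypotheses of Theorem~\ref{MainTheorem} hold. Reproducing the degree computations of Section~\ref{section-4.1}, Lemma~\ref{lem-deg1-ball} provides a small radius $r > 0$ with $D_{L}(L-N,B(0,r)) = 1$, while Lemma~\ref{lem-deg0-ball} together with the a priori bound of Section~\ref{section-4.3} gives $D_{L}(L-N,B(0,R^{*})) = 0$ (note that $R^{*}$ is independent of $\mu$). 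Additivity of the coincidence degree then yields $D_{L}(L-N,B(0,R^{*}) \setminus B[0,r]) = -1$, and the maximum principle argument recalled in Remark~\ref{rem-4.3} identifies the resulting nontrivial solution of the coincidence equation as a positive $T$-periodic solution of \eqref{eq-5.2}.

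For the nonexistence of positive $T$-periodic solutions when $0 < \nu < \omega_{*}$, we invoke \cite[Lemma~4.1]{BoFeZa-pp2015} (equivalently \cite[Proposition~3.1]{FeZa-2015ade}). The underlying mechanism is the logarithmic substitution $v(t) := \ln u(t)$, which, applied to any hypothetical positive $T$-periodic solution of \eqref{eq-5.2} and integrated over $[0,T]$, produces the identity
\begin{equation*}
\int_{0}^{T}\bigl(v'(t)\bigr)^{2}\,dt + \nu \int_{0}^{T} a_{\mu}(t)\, \frac{g(u(t))}{u(t)}\,dt = 0.
\end{equation*}
The $\mathcal{C}^{1}$-regularity of $g$ together with $g_{0} = 0$ forces $g(s)/s \to 0$ as $s \to 0^{+}$ and bounds $g(s)/s$ on $[0,R^{*}]$, so this identity combined with the a priori upper bound $\|u\|_{\infty} < R^{*}$ becomes inconsistent for $\nu$ smaller than an explicit threshold depending only on $\mu$ and on $g$. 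Setting $\omega_{*}$ equal to this threshold and replacing it by $\min(\omega_{*},\omega^{*})$ if necessary delivers the statement $\omega_{*} \leq \omega^{*}$. The main obstacle in this step is precisely the extraction of a usable lower bound on $u$ from the displayed identity: this is the technical content of the cited lemma, whose $\mathcal{C}^{1}$-hypothesis on $g$ (stronger than the mere continuity required elsewhere in the paper) cannot be dispensed with.
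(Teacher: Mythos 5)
Your decomposition (multiplicity from Corollary~\ref{cor-5.1}, existence for $\nu>\omega^{*}$ from the degree machinery, nonexistence for $\nu<\omega_{*}$ from the cited lemma) is exactly the combination the paper intends, and the multiplicity and nonexistence parts are handled acceptably, the latter by explicit deferral to \cite[Lemma~4.1]{BoFeZa-pp2015}. There is, however, a genuine gap in your existence step. You claim that ``reproducing the degree computations of Section~\ref{section-4.1}, Lemma~\ref{lem-deg1-ball} provides a small radius $r>0$ with $D_{L}(L-N,B(0,r))=1$.'' But the verification of $(H_{r})$ carried out in Section~\ref{section-4.2} is valid only for $\mu\geq\mu_{r}$, where $\mu_{r}$ satisfies \eqref{eq-mur} and is in general \emph{strictly larger} than $\mu^{\#}$; and this threshold does not improve when you rescale $g$ to $\nu g$ (equivalently, replace $a$ by $\nu a$), because the factor $\nu$ cancels between $K_{0}$ and the integral of $\nu a^{-}$ in \eqref{eq-mur}. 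The corollary asserts existence for \emph{every} $\mu$ with $\int_{0}^{T}a_{\mu}(t)\,dt<0$, i.e.\ every $\mu>\mu^{\#}$, so your argument leaves the range $\mu^{\#}<\mu<\mu_{r}$ uncovered. Closing this gap is precisely the role of the $\mathcal{C}^{1}$ hypothesis in the existence part: as explained in Remark~\ref{rem-4.6}, under continuous differentiability of $g$ near $0$ the cited results \cite[Theorems~3.1--3.2]{FeZa-2015ade} verify $(H_{d})$ for all small $d$ assuming only that the weight has negative mean value. You attribute the $\mathcal{C}^{1}$ hypothesis exclusively to the nonexistence step, which is where the misdiagnosis originates.

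A secondary point: the identity
\begin{equation*}
\int_{0}^{T}\bigl(v'(t)\bigr)^{2}\,dt+\nu\int_{0}^{T}a_{\mu}(t)\,\frac{g(u(t))}{u(t)}\,dt=0
\end{equation*}
obtained from $v=\ln u$ is correct but is not by itself ``inconsistent'' for small $\nu$: both terms simply tend to zero, and no contradiction follows without further input. The actual mechanism behind the nonexistence threshold is rather the zero-average relation $\int_{0}^{T}a_{\mu}(t)g(u(t))\,dt=0$ (obtained by integrating the equation directly), combined with the fact that for small $\nu$ any positive $T$-periodic solution has small oscillation, so that the Lipschitz control on $g$ afforded by $\mathcal{C}^{1}$-regularity forces $\int_{0}^{T}a_{\mu}g(u)$ to be close to $g(\bar{u})\int_{0}^{T}a_{\mu}<0$. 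Since you defer the technical content to the cited lemma this does not invalidate the step, but the heuristic you offer would not survive being made rigorous as stated.
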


Equation \eqref{eq-5.2} is substantially equivalent to \eqref{eq-5.1}. We have preferred to write it in a slightly
different form for sake of convenience in stating Corollary~\ref{cor-5.5}.

\section{Subharmonic solutions and complex dynamics}\label{section-6}

Our next goal is to apply the preceding results concerning the existence and multiplicity of periodic solutions to the search of
subharmonic solutions. Then, we shall use the information obtained on the subharmonics to produce bounded positive solutions
which are not necessarily periodic and can reproduce an arbitrary coin-tossing sequence.

\subsection{Subharmonic solutions}\label{section-6.1}

In the previous sections we have studied the existence and multiplicity of $T$-periodic solutions, assuming that the
weight coefficient is a $T$-period function. Since any $T$-periodic coefficient can be though as a $kT$-periodic
function, with the same technique we can look for the existence of $kT$-periodic solutions (with $k$ an integer).
In this context, a typical problem which occurs is that of proving the minimality of the period, that is, to ensure the presence of
subharmonic solutions of order $k$, according to the standard definition that we recall now for the reader's convenience.
A \textit{subharmonic solution of order $k$}, with $k\geq2$ an integer, is a $kT$-periodic solution which is not $lT$-periodic
for any integer $l=1,\ldots,k-1$. Throughout this section, for the sake of simplicity in the exposition, when not explicitly
stated we assume that \textit{$k$ is an integer such that $k\geq 2$}.

Generally speaking, if $x(t)$ is a $kT$-periodic solution of a differential system $x' = f(t,x)$ in $\mathbb{R}^{N}$,
with $f(t + T,x) = f(t,x)$ for all $t\in \mathbb{R}$ and $x\in {\mathbb{R}}^{N}$, the information that $x(t)$
is not $lT$-periodic, for any integer $l=1,\ldots,k-1$, is not enough to conclude that $kT$ is actually the minimal positive
period of the solution. However, in many significant situations, it is possible to derive such a conclusion, under suitable conditions
on the vector field $f(t,x)$. For instance, in case of \eqref{eq-1.1} and for $g(s)$ satisfying $(g_{*})$, it is easy to check
that any positive subharmonic solution of order $k$ is a solution of minimal period $kT$ provided that $T$ is the minimal period
of the weight function. The problem of minimality of the period in the study of subharmonic solutions is a topic of considerable importance
in this area of research and different approaches have been proposed depending also on the nature of the techniques adopted to obtain the solutions.
See for instance \cite{BoZa-2013, CaLa-1996, DiZa-1993, MiTa-1988,Pl-1966} for some pertinent remarks. It may be also interesting to
observe that equations of the form \eqref{eq-1.1}, with $w(t)$ a non-constant $T$-periodic coefficient, do not possess
\textit{exceptional solutions}, i.e.~solutions having a minimal period which has an irrational ratio with $T$ (cf.~\cite[ch.~I, \S~4]{SaCo-1964}).
In view of all these premises, throughout the section we suppose that the function $a(t)$ is a periodic function having $T > 0$
as a minimal period.

As a final remark, we observe that if $u(t)$ is a $kT$-periodic solution of \eqref{eq-1.1}
then, for any integer $\ell$ with $1 \leq \ell \leq k-1$, also the function $v_{\ell}(t):= u(t + \ell T)$ is a $kT$-periodic solution
and it has $kT$ as a minimal period if and only if $kT$ is the minimal period for $u(t)$. Accordingly, whenever it happens that we find a subharmonic solution
of order $k$, we also find other $k-1$ subharmonic solutions (of the same order). These solutions, even if formally distinct,
will be considered as belonging to the same periodicity class and for the purposes of counting the number of solutions will count only once.

In order to present in a simplified manner our main multiplicity results for subharmonic solutions, we
first take a class of weights of special form, namely we suppose that
\begin{quote}
\textit{$a \colon \mathbb{R} \to \mathbb{R}$ is a continuous periodic sign-changing function
with simple zeros and with minimal period $T$, such that there exist two consecutive zeros $\alpha < \beta$
so that $a(t) > 0$ for all $t \in \mathopen{]}\alpha,\beta\mathclose{[}$ and $a(t) < 0$ for all $t\in\mathopen{]}\beta,\alpha + T\mathclose{[}$.}
\end{quote}
That is $a(t)$ has only one positive hump and one negative one in a period interval. In such a simplified situation, the following result holds.

\begin{theorem}\label{th-6.1}
Let $g \colon {\mathbb{R}}^{+} \to {\mathbb{R}}^{+}$ be a continuous function satisfying $(g_{*})$,
\begin{equation*}
g_{0} = 0 \quad \text{ and } \quad g_{\infty} = +\infty.
\end{equation*}
Then there exists $\mu^{*}>0$ such that, for all $\mu>\mu^{*}$ and for every integer $k\geq 2$,
equation \eqref{eq-2.1} has a subharmonic solution of order $k$.
\end{theorem}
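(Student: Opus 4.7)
The plan is to view $a$ as a $kT$-periodic function and apply Theorem~\ref{MainTheorem} on a time interval of length $kT$. Under this viewpoint, $a$ has exactly $m = k$ positive humps $I^{+}_{j}$ (each a translate of $[\alpha,\beta]$) separated by $k$ negative ones, so hypothesis $(a_{*})$ is satisfied. Since $g_{\infty} = +\infty$, Corollary~\ref{cor-5.2} applies and produces $2^{k}-1$ positive $kT$-periodic solutions, one $u_{\mathcal{I}} \in \Lambda^{\mathcal{I}}_{r,R}$ for each nonempty $\mathcal{I} \subseteq \{1,\ldots,k\}$. To extract a subharmonic of order $k$, I will select $\mathcal{I} = \{1\}$.

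The first step is to verify that the constants $r$, $R$, and $\mu^{*}$ produced by the proof of Theorem~\ref{MainTheorem} can all be chosen independently of $k$. The key observation is that in the $kT$-periodic picture every positive (respectively negative) hump is a translate of the unique hump on $[0,T]$. Consequently, the quantities $K_{i}$ in \eqref{eq-Ki} collapse to a single value, so $K_{0}$ and the small constant $r$ chosen in \eqref{cond-etar} are independent of $k$; the threshold $\mu_{r}$ in \eqref{eq-mur} depends only on an integral of $a^{-}$ over a sub-interval of length at most $T$; the a priori bound $R^{*}$ of Section~\ref{section-4.3} is obtained via a Sturm comparison on a single positive hump; and the constants $\mu^{\pm}_{j}$ in \eqref{eq-mu+}--\eqref{eq-mu-} take a single value $\mu^{\pm}$ by translation invariance. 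Hence one may fix, once and for all, a constant $\mu^{*} := \mu_{r} \vee \mu^{+} \vee \mu^{-}$ that works simultaneously for every $k \geq 2$.

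Next, for $\mu > \mu^{*}$, I would show that the solution $u := u_{\{1\}}$ has minimal period exactly $kT$. By definition of $\Lambda^{\{1\}}_{r,R}$, $u$ satisfies $\max_{I^{+}_{1}} u > r$, while $\max_{I^{+}_{j}} u < r$ for every $j = 2,\ldots,k$. Suppose for contradiction that $u$ is $\ell T$-periodic for some $1 \leq \ell \leq k-1$. Combining with the $kT$-periodicity, $u$ must also be $\gcd(\ell,k) T$-periodic, so we may assume that $\ell$ is a proper divisor of $k$. Then $u(t + \ell T) \equiv u(t)$ translates $I^{+}_{1}$ onto $I^{+}_{1 + \ell}$ (with indices taken cyclically in $\{1,\ldots,k\}$), forcing $\max_{I^{+}_{1+\ell}} u = \max_{I^{+}_{1}} u > r$, which contradicts $u \in \Lambda^{\{1\}}_{r,R}$ since $2 \leq 1 + \ell \leq k$. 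Because $T$ is the minimal period of $a$, the remark recalled in Section~\ref{section-6.1} then ensures that $kT$ is the minimal period of $u$, so $u$ is a genuine subharmonic solution of order $k$.

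The main obstacle is the first step: it requires rechecking each constant appearing through Sections~\ref{section-4.2}--\ref{section-4.4} and confirming that it depends only on data that are invariant under $t \mapsto t + T$, so that the \emph{same} $\mu^{*}$ handles every order $k$ simultaneously. Once uniformity is granted, the minimality argument reduces to the elementary combinatorial fact that $\{1\}$ is not fixed by any nontrivial cyclic shift of $\{1,\ldots,k\}$; more generally, any $\mathcal{I} \subseteq \{1,\ldots,k\}$ which is not invariant under a proper cyclic shift yields another subharmonic solution of order $k$, an observation which can be used to count subharmonics and which will also feed into the construction of chaotic-like bounded solutions discussed later in the section.
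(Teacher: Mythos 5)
Your proposal is correct and follows essentially the same route as the paper: reinterpret $a$ as $kT$-periodic with $m=k$ humps, check that all constants ($K_0$, $r$, $\mu_r$, $R^{*}$, $\gamma$, $\delta^{\pm}$, hence $\mu^{*}$) are $k$-independent by translation invariance, and pick $\mathcal{I}=\{1\}$ so that the level-$r$ dichotomy between $I^{+}_{1}$ and the other $I^{+}_{i}$ rules out $\ell T$-periodicity for $\ell=1,\dots,k-1$. The only cosmetic difference is that your reduction via $\gcd(\ell,k)$ is unnecessary — the paper argues directly that $u(\hat t_1)>r$ while $u<r$ on $I^{+}_{1+\ell}$ for any $\ell\in\{1,\dots,k-1\}$ — but this does not affect correctness.
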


\begin{proof}
Without loss of generality (if necessary, we can make a shift by $\alpha$ in the time variable), we suppose that
\begin{equation*}
a(t) > 0, \, \text{ for all } \; 0 < t < \tau:= \beta-\alpha, \quad \text{ and } \quad a(t) < 0, \, \text{ for all } \; \tau < t < T.
\end{equation*}
Let us fix an integer $k\geq 2$ and consider the $T$-periodic function $a(t)$ as a $kT$-periodic weight
on the interval $\mathopen{[}0,kT\mathclose{]}$. In such an interval we have condition $(a_{*})$ satisfied with
\begin{equation*}
I^{+}_{i} = \mathopen{[}(i-1)T,\tau + (i-1)T\mathclose{]}
\quad \text{ and } \quad
I^{-}_{i} = \mathopen{[}\tau + (i-1)T,iT\mathclose{]},
\end{equation*}
for $i=1,\ldots,k$.
With respect to the notation introduced in Section~\ref{section-2}, we also have $m = k$,
$\sigma_{1} = 0$, $\sigma_{k+1} = kT$ and
\begin{equation*}
0 < \tau_{1} = \tau < \sigma_{2} = T < \ldots < \sigma_{k} = (k-1)T < \tau_{k} = \tau + (k-1)T < kT.
\end{equation*}
In this setting we can apply Corollary~\ref{cor-5.2}, which ensures the existence of $2^{k} -1$
positive solutions which are also $kT$-periodic, provided that $\mu$ is sufficiently large.

Even if we have found $kT$-periodic solutions, our proof is not yet complete. In fact we still have to verify that $\mu^{*}$
(found in the proof of Theorem~\ref{MainTheorem}) is independent on $k$ and,
moreover, that among the $2^{k}-1$ periodic solutions there is at least one subharmonic of order $k$.

For the first question, we need to check how the bounds obtained in the proof of Theorem~\ref{MainTheorem} depend on the weight function.
First of all we underline that, by the $T$-periodicity of $a(t)$, the constants $K_{i}$ defined in \eqref{eq-Ki} are
all equal for $i=1,\ldots,k$, then $K_{0}$ does not depend on $k$ (cf.~\eqref{eq-K0}).
Consequently condition \eqref{cond-etar} reads as
\begin{equation*}
\eta(r) \, 2 \|a^{+}\|_{L^{1}(\mathopen{[}0,\tau\mathclose{]})} e^{|c|\tau} \bigl{(}\tau + e^{|c|(T-\tau)}(T-\tau)\bigr{)} < 1
\end{equation*}
and so the small constant $r>0$ is absolute and depends only on $c$, $\|a^{+}\|_{L^{1}(\mathopen{[}0,\tau\mathclose{]})}$, $T$ and $\tau$,
but it does not depend on $k$.

Once that we have fixed $r > 0$, using again the $T$-periodicity of the weight, we notice also that the lower bounds $\mu^{\#}$ and $\mu_{r}$
do not depend on $k$ (cf.~\eqref{eq-mud} and \eqref{eq-mur}).

The constant $R^{*}$ is chosen in \eqref{boundR*} and depends on the a priori bounds $R_{i}$, which in turn depend
on the properties of $a(t)$ restricted to the interval $I^{+}_{i}$. In our case, by the $T$-periodicity of the coefficient $a(t)$,
we can choose $R_{i}$ as constant with respect to $i$. Therefore, $R^{*}$ is independent on $k$ and then also the constant $\gamma$
defined in \eqref{eq-gamma} does not depend on $k$.
By the periodicity of $a(t)$, the constants $\delta^{+}_{j}$ introduced in Section~\ref{section-4.4} (see \eqref{eq-deltaj})
can be also taken all equal to a common value $\delta^{+} = \delta$ such that $0 < \delta < T- \tau$ and $\delta e^{|c|\delta} < \tau/2$.
The same choice can be made for $\delta^{-}_{j}$ in order to have $\delta^{-}_{j} = \delta$ for all $j$.
From these choices of the constants $R^{*}$, $\gamma$ and $\delta$,
for all $j=1,\ldots,k$ we take $\mu^{\pm}_{j}$, according to \eqref{eq-mu+} and \eqref{eq-mu-}, as
\begin{equation*}
\mu_{j}^{+} = \mu^{+}:=
\dfrac{R^{*} \, e^{|c|\delta}}{\gamma \int_{\tau}^{\tau+\delta }
\bigl{(}\int_{\tau}^{s} a^{-}(\xi)~\!d\xi \bigr{)} ~\!ds}
\end{equation*}
and
\begin{equation*}
\mu_{j}^{-} = \mu^{-} :=
\dfrac{R^{*} \, e^{|c|\delta}}{\gamma \int_{T-\delta}^{T}\bigl{(}\int_{s}^{T} a^{-}(\xi)~\!d\xi \bigr{)} ~\!ds},
\end{equation*}
respectively. Therefore, setting
\begin{equation*}
\mu^{*} := \mu_{r} \vee \max\bigl{\{}\mu^{+},\mu^{-}\bigr{\}},
\end{equation*}
we have found an absolute constant which is independent on $k$ and also does not depend on the set
of indices $\mathcal{I}$.
This solves the first question.

To complete the proof, we show how to produce at least one subharmonic solution. It is sufficient to take $\mathcal{I}:= \{1\}$.
As explained in Remark~\ref{rem-4.1} and also at the end of Section~\ref{section-4.1},
there exists a positive $kT$-periodic solution $u(t)$ for \eqref{eq-2.1} such that $u\in\Lambda^{\{1\}}_{r,R}$.
This implies that there exists $\hat{t}_{1}\in I^{+}_{1}=\mathopen{[}0,\tau\mathclose{]}$ such that $r < u(\hat{t}_{1}) < R$ and,
if $i\neq 1$, $0<u(t)<r$ for all $t\in I^{+}_{i}$. Then $u(\hat{t}_{1})\neq u(t)$ for all $t\in I_{i}$ with $i\neq 1$, and hence
$u$ is not $lT$-periodic for all $l=1,\ldots,k-1$. We conclude that $u$ is a subharmonic solution of order $k$.
\end{proof}

\begin{remark}\label{rem-6.1}
The fact that the weight coefficient has simple zeros has been assumed only for convenience in the exposition.
The same result holds true if we suppose that there are $\alpha < \beta$ such that $a(t)\succ 0$ on $\mathopen{[}\alpha,\beta\mathclose{]}$,
$a(t) \prec 0$ on $\mathopen{[}\beta, \alpha + T\mathclose{]}$ and $a(t)$ is not identically zero on all left neighborhoods of $\alpha$
and on all right neighborhoods of $\beta$. The possibility of more changes of sign of $a(t)$ in a period can be considered as well.
$\hfill\lhd$
\end{remark}

\begin{remark}\label{rem-6.2}
We stress the fact that $\mu^{*}$ is chosen independent on $k$ and also independent on the set of indices $\mathcal{I}$.
This is a crucial observation if one wants to prove the existence of bounded solutions defined on the whole real line
and with \textit{any prescribed behavior} as a limit of subharmonic solutions (see Section~\ref{section-6.3} and \cite{BaBoVe-jde2015}).
$\hfill\lhd$
\end{remark}

\subsection{Counting the subharmonic solutions}\label{section-6.2}

Theorem~\ref{th-6.1} guarantees the existence of at least a subharmonic solution of order $k$ for \eqref{eq-2.1},
but, in general, there are many solutions of this kind.
Even if in the statement we have not described the number of subharmonics and their behavior, this can be achieved
(with the same proof) just exploiting more deeply the content of Theorem~\ref{MainTheorem}.
In this section, given an integer $k\geq 2$, we look for an estimate on the number of subharmonic solutions of order $k$.
To this purpose, we adapt to our setting some considerations which are typical in the area of dynamical systems, combinatorics and graph theory.

First of all, we need to introduce a notation, which is borrowed from \cite{BaBoVe-jde2015}.
We start with an alphabet of two symbols, conventionally indicated as $\{0,1\}$, and denote by $\{0,1\}^{k}$
the set of the $k$-tuples of $\{0,1\}$, that is the set of finite words of length $k$.
We also denote by $0^{[k]}$ the $0$-string in $\{0,1\}^{k}$.

For simplicity, we still consider the special weight coefficient as in the setting of Theorem~\ref{th-6.1}.
Recalling the definitions of $I^{\pm}_{i}$, for $i=1,\ldots,k$, given by
\begin{equation*}
I^{+}_{i} = \mathopen{[}(i-1)T,\tau + (i-1)T\mathclose{]}
\quad \text{ and } \quad
I^{-}_{i} = \mathopen{[}\tau + (i-1)T,iT\mathclose{]},
\end{equation*}
and reworking as in the proof of Theorem~\ref{th-6.1}, we have the following result.

\begin{theorem}\label{th-6.2}
Let $g \colon {\mathbb{R}}^{+} \to {\mathbb{R}}^{+}$ be a continuous function satisfying $(g_{*})$,
\begin{equation*}
g_{0} = 0 \quad \text{ and } \quad g_{\infty} = +\infty.
\end{equation*}
Then there exist $0 < r < R$ and $\mu^{**} > 0$ such that, for all $\mu > \mu^{**}$ and for every integer $k\geq 2$,
given any $k$-tuple $\mathcal{L}^{[k]} = (s_{i})_{i=1,\ldots,k}$ with $\mathcal{L}^{[k]}\neq 0^{[k]}$,
there exists at least one $kT$-periodic positive solution of equation \eqref{eq-2.1} such that $\|u\|_{\infty} < R$ and
\begin{itemize}
\item $0<u(t)<r$ on $I^{+}_{i}$, if ${s}_{i}=0$;
\item $r<u(\hat{t}_{i})<R$ for some $\hat{t}_{i}\in I^{+}_{i}$, if ${s}_{i}=1$;
\item $0<u(t)<r$ on $I^{-}_{i}$, for all $i=1,\ldots,k$.
\end{itemize}
\end{theorem}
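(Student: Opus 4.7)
The plan is to reduce Theorem~\ref{th-6.2} to Theorem~\ref{MainTheorem} (in the form of Corollary~\ref{cor-5.2}) applied to \eqref{eq-2.1} regarded as a $kT$-periodic problem, exactly as in the proof of Theorem~\ref{th-6.1}, and then to read the three bullet points off the degree-theoretic localization $\Lambda^{\mathcal{I}}_{r,R}$ together with the a posteriori bounds of Section~\ref{section-4.5}. The correspondence between words and index sets is the natural one: to a $k$-tuple $\mathcal{L}^{[k]} = (s_i)_{i=1,\ldots,k}$ associate
\begin{equation*}
\mathcal{I} := \{i \in \{1,\ldots,k\} \colon s_i = 1\} \subseteq \{1,\ldots,k\},
\end{equation*}
and observe that the hypothesis $\mathcal{L}^{[k]} \neq 0^{[k]}$ is precisely $\mathcal{I} \neq \emptyset$, which is what the multiplicity result requires.

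First, I would set up $a(t)$ as a $kT$-periodic function; as in the proof of Theorem~\ref{th-6.1}, condition $(a_{*})$ is satisfied with $m = k$ and intervals $I^{+}_{i}$, $I^{-}_{i}$ as displayed in the statement. Corollary~\ref{cor-5.2} then produces, for every nonempty $\mathcal{I} \subseteq \{1,\ldots,k\}$, a positive $kT$-periodic solution $u = u_{\mathcal{I}}$ of \eqref{eq-2.1} lying in $\Lambda^{\mathcal{I}}_{r,R}$ with $\|u\|_{\infty} < R$. Unpacking the definition \eqref{eq-2.lambda} of $\Lambda^{\mathcal{I}}_{r,R}$ gives immediately the first two bullets: when $s_i = 0$ (i.e.~$i \notin \mathcal{I}$) one has $\max_{I^{+}_{i}} u < r$, and when $s_i = 1$ (i.e.~$i \in \mathcal{I}$) one has $r < u(\hat{t}_i) < R$ for some $\hat{t}_i \in I^{+}_{i}$.

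Second, the third bullet — $0 < u(t) < r$ on each $I^{-}_{i}$ — is exactly what is established in Section~\ref{section-4.5}. By the maximum principle \eqref{eq-max}, the maximum of $u$ on $I^{-}_{i}$ is attained at an endpoint $\sigma_{i+1}$ or $\tau_i$. If that maximum were $\geq r$, then propagating the values $u(\tau_i) \geq r$ (or $u(\sigma_{i+1}) \geq r$) and the a priori bound on $u'$ at the endpoint via the estimates that follow \eqref{eq-ineq} would produce, on a small subinterval of $I^{-}_{i}$ of length $\delta$ chosen as in \eqref{eq-deltaj}, a value $u(\tau_i + \delta) \geq R^{*}$, contradicting \eqref{upperbound}. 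This argument requires $\mu$ larger than the constant $\mu^{**}$ singled out in Remark~\ref{rem-4.12}.

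Finally, the key uniformity: the threshold $\mu^{**}$ can be chosen independently of $k$ and of the particular $\mathcal{I}$. This is the same observation made in the proof of Theorem~\ref{th-6.1}: by $T$-periodicity of $a(t)$ every constant appearing in the proofs of Sections~\ref{section-4.2}--\ref{section-4.5} (namely $K_i$ in \eqref{eq-Ki}, hence $K_0$ and $r$; then $\mu^{\#}$, $\mu_r$; the a priori bound $R^{*}$ from \eqref{boundR*}; the constant $\gamma$ in \eqref{eq-gamma}; and the $\delta^{\pm}_j$ together with the $\mu^{\pm}_j$ in \eqref{eq-mu+}--\eqref{eq-mu-}) depends only on $c$, $T$, $\tau$ and on $a(t)$ restricted to a fundamental period, not on the index $i$ or on $k$. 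Taking $\mu^{**}$ to be the maximum of these (finitely many, $k$-independent) thresholds concludes the proof. The only real thing to check is this uniformity audit of the constants; everything else is an immediate reinterpretation of Theorem~\ref{MainTheorem} and Section~\ref{section-4.5} in coin-tossing language.
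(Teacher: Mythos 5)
Your proposal is correct and follows essentially the same route as the paper: reduce to Theorem~\ref{MainTheorem} on the period interval $\mathopen{[}0,kT\mathclose{]}$ via the bijection $\mathcal{L}^{[k]}\mapsto\mathcal{I}_{\mathcal{L}^{[k]}}=\{i\colon s_i=1\}$, read the first two bullets off the definition of $\Lambda^{\mathcal{I}}_{r,R}$, get the third from the a posteriori bounds of Section~\ref{section-4.5}, and check (as in the proof of Theorem~\ref{th-6.1}) that all thresholds are independent of $k$ by the $T$-periodicity of $a(t)$. No gaps.
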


\begin{proof}
We proceed exactly as in the proof of Theorem~\ref{th-6.1} till to the final step where we chose the set
of indices $\mathcal{I}$. At this moment $r$, $R$ and $\mu^{*}$ are determined and we are free to take any $\mu > \mu^{*}$.
Let us consider an arbitrary integer $k\geq 2$. Observe that we took $\mathcal{I} = \{1\}$ in order to be sure to have a subharmonic,
however, Theorem~\ref{MainTheorem} provides the existence of a positive $kT$-periodic solution in $\Lambda^{\mathcal{I}}_{r,R}$
for any nonempty subset $\mathcal{I}$ of $\{1,\ldots,k\}$.

Given an arbitrary $k$-tuple ${\mathcal{L}}^{[k]} = (s_{i})_{i=1,\ldots,k}$ with ${\mathcal{L}}^{[k]}\neq 0^{[k]}$,
using a typical bijection between $\{0,1\}^{k}$ and the power set $\mathscr{P}(\{1,\ldots,k\})$,
we associate to $\mathcal{L}^{[k]}$ the set
\begin{equation*}
\mathcal{I}_{\mathcal{L}^{[k]}} := \bigl{\{}i\in\{1,\ldots,k\}\colon {s}_{i}=1 \bigr{\}}.
\end{equation*}
Now, applying Theorem~\ref{MainTheorem}, we have guaranteed the existence of at least one $kT$-periodic solution
$u(t)$ which is positive and belongs to the set $\Lambda^{\mathcal{I}_{\mathcal{L}^{[k]}}}_{r,R}$.
Recalling the definition of $\Lambda^{\mathcal{I}}_{r,R}$ in \eqref{eq-2.lambda}, we find that
$u(t)$ satisfies the first two conditions in the statement of the theorem. The latter condition, concerning
the smallness of $u(t)$ on the intervals $I^{-}_{i}$, follows from the result in Section~\ref{section-4.5}
provided that $\mu$ is sufficiently large, say $\mu > \mu^{**}$.
Arguing as in the proof of Theorem~\ref{th-6.1}, it is easy to note that $\mu^{**}$ does not depend on $k$.
\end{proof}

The above theorem provides the existence of $2^{k}-1$ distinct $kT$-periodic solutions of \eqref{eq-2.1} which
are positive and uniformly bounded in $\mathbb{R}$. Our goal now is to detect among these solutions the ``true''
subharmonics of order $k$ which do not belong to the same periodicity class. Figure~\ref{fig-02} gives
an explanation of what we are looking for.

\begin{figure}[h!]
\centering
\includegraphics[width=0.77\textwidth]{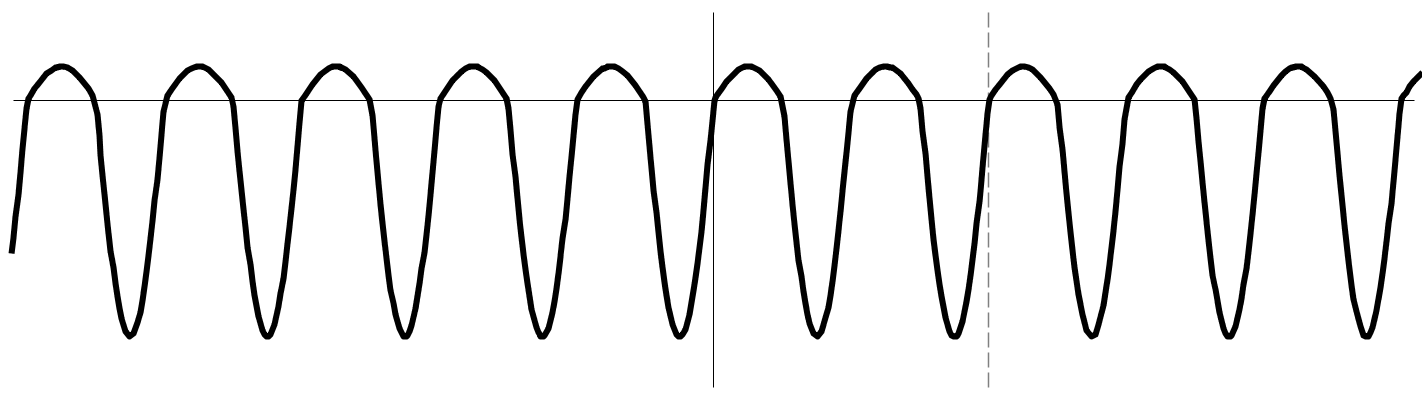}
\\\vspace*{10pt}
\includegraphics[width=0.26\textwidth]{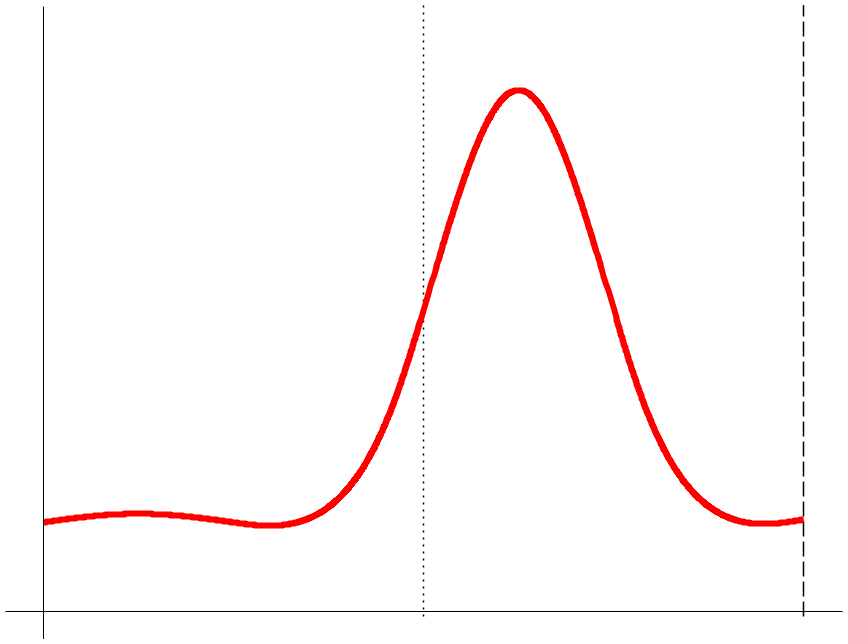}\qquad
\includegraphics[width=0.26\textwidth]{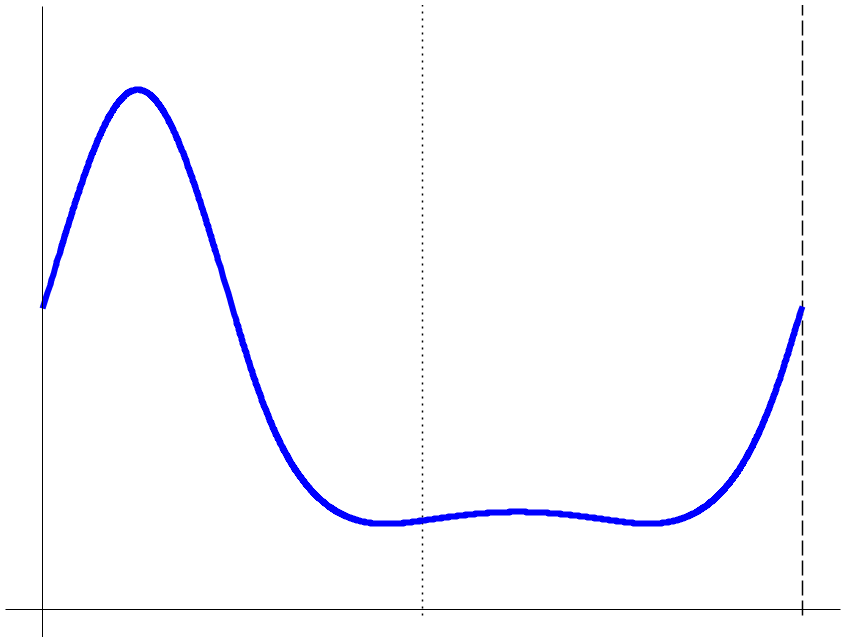}\qquad
\includegraphics[width=0.26\textwidth]{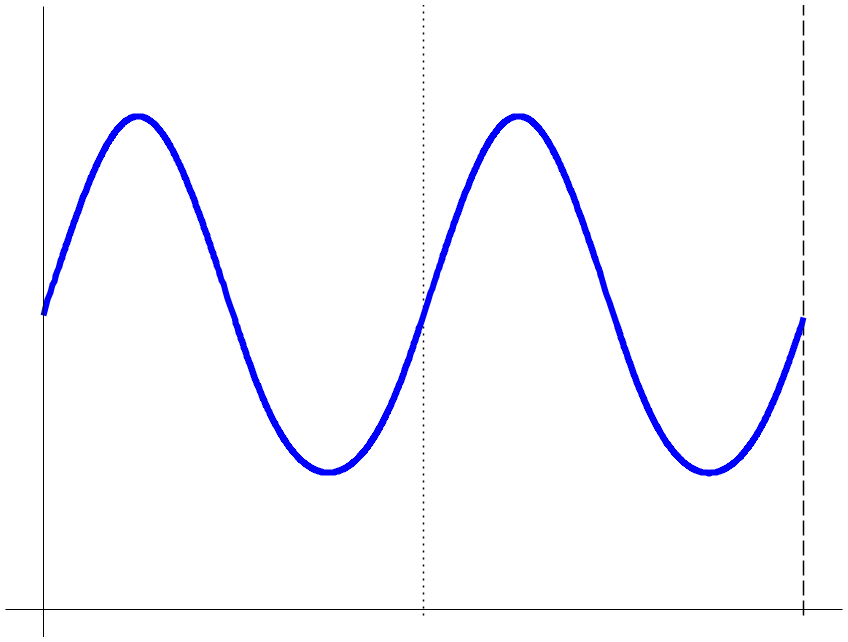}
\caption{\small{At the top we have shown the graph of the function $a_{\mu}(t)$, where $a(t) = \sin(2\pi t)$ and $\mu = 7$.
Using a numerical simulation we have studied the subharmonic solutions of order $k=2$ of equation \eqref{eq-2.1}
with $g(s) = \max\{0,100\, s\arctan|s|\}$. Clearly, $T=1$.
In the lower part, the figure shows the graphs of three $2$-periodic positive solutions,
whose existence is consistent with Theorem~\ref{th-6.2}.
The first two solutions are subharmonic solutions of order $2$ and the third one is a $1$-periodic solution.
As subharmonic solutions of order $2$, we consider only the first one, since the second one is a translation by $1$ of the first solution.
}}
\label{fig-02}
\end{figure}

In order to count the $k$-tuples corresponding to subharmonic solutions of order $k$ which are not equal up to translation (geometrically distinct),
we notice that the number we are looking for coincides with the number of \textit{binary Lyndon words of length $k$},
that is the number of binary strings inequivalent modulo rotation (cyclic permutation) of the digits and not having
a period smaller than $k$. Usually, in each equivalent class one selects the minimal element in the lexicographic ordering.
For instance, for the alphabet $\mathscr{A}= \{a,b\}$ and $k=4$, the corresponding binary Lyndon words of length $4$
are $aaab$, $aabb$, $abbb$. Note that the string $abab$ is not acceptable as it represents a sequence of period $2$ and
the string $bbaa$ is already counted as $aabb$. To give a formal definition,
consider an alphabet $\mathscr{A}$ which, in our context, is a nonempty totally ordered set of $n\geq 2$ symbols.
A \textit{$n$-ary Lyndon word of length $k$} is a string of $k$ digits of $\mathscr{A}$
which is strictly smaller in the lexicographic ordering than all of its nontrivial rotations.

The number of $n$-ary Lyndon words of length $k$ is given by \textit{Witt's formula}
\begin{equation}\label{eq-Lyndon}
\mathcal{S}_{n}(k) = \dfrac{1}{k} \sum_{l|k} \mu(l) \, n^{\frac{k}{l}},
\end{equation}
where $\mu(\cdot)$ is the M\"{o}bius function, defined on $\mathbb{N}\setminus\{0\}$ by $\mu(1) = 1$, $\mu(l) = (-1)^{s}$ if $l$ is the
product of $s$ distinct primes and $\mu(l) = 0$ otherwise (cf.~\cite[\S~5.1]{Lo-1997}).
Formula \eqref{eq-Lyndon} can be obtained by the \textit{M\"{o}bius inversion formula},
which is strictly related with the classical \textit{inclusion-exclusion principle}.

For instance, the values of $\mathcal{S}_{2}(k)$ (number of binary Lyndon words of length $k$) for $k=2,\ldots,10$ are $1$, $2$, $3$, $6$, $9$, $18$, $30$, $56$, $99$.

The following proposition provides an explicit formula of $\mathcal{S}_{n}(k)$ (for arbitrary integers $n,k \geq 2$), depending on the
\textit{prime factorization} of $k$.

\begin{proposition}\label{prop-6.1}
Let $n,k \geq 2$ be two integers. If the prime factorization of $k$ is
\begin{equation*}
k = p_{1}^{\alpha_{1}} \cdot p_{2}^{\alpha_{2}} \cdot p_{3}^{\alpha_{3}} \cdot \ldots \cdot p_{s}^{\alpha_{s}} = \prod_{i=1}^{s} p_{i}^{\alpha_{i}},
\end{equation*}
where $s$ is the number of distinct prime factors of $k$,
then the following formula holds
\begin{equation*}
\mathcal{S}_{n}(k) = \dfrac{1}{k} n^{k} + \dfrac{1}{k} \sum_{i=1}^{s}(-1)^{i}\sum_{\substack{j_{d}\in\{1,\ldots,s\} \\ j_{1}<j_{2} < \ldots < j_{i}}}
n^{\frac{k}{p_{j_{1}}\cdot \ldots \cdot p_{j_{i}}}}.
\end{equation*}
\end{proposition}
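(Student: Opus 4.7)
The plan is to derive the claimed closed form directly from Witt's formula \eqref{eq-Lyndon} by carrying out the sum over divisors of $k$ using the definition of the Möbius function. The only nonzero contributions to $\sum_{l\mid k}\mu(l)\,n^{k/l}$ come from squarefree divisors $l$ of $k$, so the first step is to describe those explicitly.

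Since $k = \prod_{i=1}^{s} p_i^{\alpha_i}$ with the $p_i$ distinct primes, the squarefree divisors of $k$ are exactly the products $p_{j_1}\cdot p_{j_2}\cdots p_{j_i}$ indexed by subsets $\{j_1<j_2<\ldots<j_i\}\subseteq\{1,\ldots,s\}$, with the empty subset ($i=0$) corresponding to $l=1$. By definition of $\mu$, for such a divisor $l=p_{j_1}\cdots p_{j_i}$ one has $\mu(l)=(-1)^{i}$, while $\mu(1)=1$. Substituting into \eqref{eq-Lyndon} I would split off the $i=0$ term and regroup the remaining contributions according to the size $i$ of the subset of prime factors, obtaining
\begin{equation*}
\mathcal{S}_{n}(k) = \frac{1}{k}\,n^{k} + \frac{1}{k}\sum_{i=1}^{s}(-1)^{i}\sum_{\substack{j_d\in\{1,\ldots,s\}\\ j_1<j_2<\ldots<j_i}} n^{k/(p_{j_1}\cdots p_{j_i})},
\end{equation*}
which is precisely the asserted identity.

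There is no genuine obstacle here; the argument is purely a bookkeeping exercise once Witt's formula is granted. The only point deserving a sentence of justification is the vanishing $\mu(l)=0$ for non-squarefree $l$, which eliminates all divisors of $k$ that contain some $p_i^{\beta}$ with $\beta\geq 2$ and thereby reduces the divisor sum to a sum over the power set $\mathscr{P}(\{1,\ldots,s\})$. After that reduction, the re-indexing of the $2^{s}$ surviving terms by the cardinality $i$ of the chosen subset of primes yields the stated double sum verbatim.
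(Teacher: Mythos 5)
Your proposal is correct and follows the same route as the paper: both arguments simply observe that the only divisors $l$ of $k$ with $\mu(l)\neq 0$ are the squarefree ones, i.e.\ $l=1$ and the products $p_{j_{1}}\cdots p_{j_{i}}$ of distinct prime factors with $\mu(l)=(-1)^{i}$, and then substitute into Witt's formula \eqref{eq-Lyndon} and regroup by the cardinality $i$ of the chosen subset of primes. No gaps; this matches the paper's proof essentially verbatim.
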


\begin{proof}
First of all, we observe that the divisors $l$ of the integer $k$ such that $\mu(l) \neq 0$ are the square-free factors of $k$,
hence $l=1$ (with $\mu(1)=1$) and the integers of the form $l=p_{j_{1}}\cdot \ldots \cdot p_{j_{i}}$
for $j_{d}\in\{1,\ldots,s\}$ (with $\mu(l)=(-1)^{i}$). The above formula immediately follows from \eqref{eq-Lyndon}.
\end{proof}

\begin{remark}\label{rem-6.3}
Although in this context formula \eqref{eq-Lyndon} and the more explicit one in Proposition~\ref{prop-6.1}
are related to the number of Lyndon words of length $k$ in an alphabet of size $n$, these formulas come out in different areas of mathematics.
Now we provide an overview of the several meanings of \eqref{eq-Lyndon}.

Still in combinatorics, it is not difficult to see that $\mathcal{S}_{n}(k)$ is also the number of \textit{aperiodic necklaces}
that can be made by arranging $k$ beads whose color is chosen from a list of $n$ colors.
The notions of \textit{Lyndon words} and \textit{necklaces} are also strictly related to \textit{de Bruijn sequences}.
We recall that a \textit{$n$-ary de Bruijn sequence of order $k$} is a circular string of characters
chosen in an alphabet of size $n$,
for which every possible subsequence of length $k$ appears as a substring of consecutive characters exactly once.
For more details about these concepts and other aspects of the formula in the context of combinatorics on words, we refer to \cite{Lo-1997, Ma-1891}
and the very interesting historical survey \cite[\S~4]{BePe-2007}.

The number $\mathcal{S}_{n}(k)$ has several meanings even outside combinatorics.
For instance, the integer $\mathcal{S}_{2}(k)$ (of binary Lyndon words of length $k$) corresponds to
the number of periodic points with minimal period $k$ in the iteration of the tent map $f(x) := 2 \min\{x,1-x\}$ on the unit interval (cf.~\cite{Du-1995},
also for more general formulas) and to the number of distinct cycles of minimal period $k$
in a shift dynamical system associated with a totally disconnected hyperbolic iterated function system (cf.~\cite[Lemma~1, p.~171]{Ba-1988}).
Concerning the more general formula for $\mathcal{S}_{n}(k)$, we just mention two other meanings.
The classical \textit{Witt's formula} (proved in 1937), which is still widely studied in algebra,
gives the dimensions of the homogeneous components of degree $k$ of the free
Lie algebra over a finite set with $n$ elements (cf.~\cite[Corollary~5.3.5]{Lo-1997}).
Moreover, in Galois theory, $\mathcal{S}_{n}(k)$ is also the number of
monic irreducible polynomials of degree $k$ over the finite field $\mathbb{F}_{n}$,
when $n$ is a prime power (in this context \eqref{eq-Lyndon} is also known as \textit{Gauss formula};
we refer to \cite[ch.~14, p.~588]{DuFo-2004} for a possible proof).

It is not possible to mention here all the other several implications of formula~\eqref{eq-Lyndon},
for example in symbolic dynamics, algebra, number theory and chaos theory. For this latter topic,
we only recall the recent paper \cite{JoSaYo-2010} where such numbers appear in connection with the study of
period-doubling cascades.

Further information and references can be found in \cite{GiRi-1961, KoRaWo-2014, Sl-oeis}.
$\hfill\lhd$
\end{remark}

\medskip

Using the above discussion, we achieve the following result.

\begin{theorem}\label{th-6.3}
Let $g \colon {\mathbb{R}}^{+} \to {\mathbb{R}}^{+}$ be a continuous function satisfying $(g_{*})$,
\begin{equation*}
g_{0} = 0 \quad \text{ and } \quad g_{\infty} = +\infty.
\end{equation*}
Let $a \colon \mathbb{R} \to \mathbb{R}$ be a $T$-periodic continuous function with minimal period $T$ such that there exist two consecutive zeros $\alpha < \beta$
so that $a(t) > 0$ for all $t \in \mathopen{]}\alpha,\beta\mathclose{[}$ and $a(t) < 0$ for all $t\in\mathopen{]}\beta,\alpha + T\mathclose{[}$.
Then there exists $\mu^{*}>0$ such that, for all $\mu>\mu^{*}$ and for every $k\geq 2$, equation \eqref{eq-2.1} has at least $\mathcal{S}_{2}(k)$ positive subharmonic solutions of order $k$.
\end{theorem}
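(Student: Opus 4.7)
The plan is to apply Theorem~\ref{th-6.2} to the $kT$-periodic setting at hand and to combine its output with a combinatorial argument based on Lyndon words. I set $\mu^{*} := \mu^{**}$, where $\mu^{**}$ is the constant from Theorem~\ref{th-6.2}, and fix any integer $k\geq 2$. For every nonzero binary word $\mathcal{L}^{[k]} = (s_{1},\ldots,s_{k})\in\{0,1\}^{k}$, Theorem~\ref{th-6.2} will yield at least one positive $kT$-periodic solution $u_{\mathcal{L}^{[k]}}$ of \eqref{eq-2.1} with $\|u_{\mathcal{L}^{[k]}}\|_{\infty}<R$, satisfying $0<u_{\mathcal{L}^{[k]}}(t)<r$ on $I^{+}_{i}$ whenever $s_{i}=0$, $\max_{t\in I^{+}_{i}}u_{\mathcal{L}^{[k]}}(t)>r$ whenever $s_{i}=1$, and $0<u_{\mathcal{L}^{[k]}}(t)<r$ on every $I^{-}_{i}$. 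Because $r$ acts as a strict threshold, the word $\mathcal{L}^{[k]}$ can be uniquely recovered from the solution by reading $s_{i}=1$ iff $\max_{t\in I^{+}_{i}}u_{\mathcal{L}^{[k]}}(t)>r$; hence distinct labels already produce distinct solutions.

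Next I would examine how the time-shift $u\mapsto u(\cdot+T)$ acts on these solutions. Since $a_{\mu}$ has period $T$, $u_{\mathcal{L}^{[k]}}(\cdot+T)$ is again a $kT$-periodic solution of \eqref{eq-2.1}, and a direct inspection of the intervals $I^{\pm}_{i}$ shows that its label is the cyclic left-shift $(s_{2},\ldots,s_{k},s_{1})$. Consequently, two solutions coming from Theorem~\ref{th-6.2} can differ by a translation of the form $jT$ with $j\in\mathbb{Z}$ only if their labels lie in the same cyclic equivalence class, and conversely the translates $u_{\mathcal{L}^{[k]}}(\cdot+jT)$ for $j=0,1,\ldots,k-1$ realize every label in the cyclic class of $\mathcal{L}^{[k]}$.

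The key step is to verify that whenever $\mathcal{L}^{[k]}$ is \emph{aperiodic} (i.e., it is fixed by no nontrivial cyclic shift), the solution $u_{\mathcal{L}^{[k]}}$ has minimal period exactly $kT$. Here I would invoke the discussion at the beginning of Section~\ref{section-6.1}: since $T$ is the minimal period of $a(t)$ and \eqref{eq-1.1} admits no exceptional solutions, any nontrivial $kT$-periodic positive solution has minimal period of the form $lT$ with $l\mid k$, so it suffices to exclude the values $l\in\{1,\ldots,k-1\}$ with $l\mid k$. If $u_{\mathcal{L}^{[k]}}$ were $lT$-periodic for such an $l$, then comparing the solution on $I^{+}_{i}$ with its values on $I^{+}_{i+l}$ would force $s_{i}=s_{i+l}$ for every $i$ modulo $k$, contradicting aperiodicity. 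Thus every aperiodic label produces a subharmonic of order $k$, and aperiodic labels lying in different cyclic classes yield subharmonics belonging to different periodicity classes, i.e.\ geometrically distinct in the sense adopted in Section~\ref{section-6.1}.

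Finally, I would pick exactly one representative from each cyclic class of aperiodic binary words of length $k$; these are by definition the binary Lyndon words of length $k$, whose cardinality is $\mathcal{S}_{2}(k)$ by Witt's formula \eqref{eq-Lyndon} (cf.\ Proposition~\ref{prop-6.1}). The main obstacle is not analytic but already absorbed into Theorem~\ref{th-6.2}: the strict separation provided by the threshold $r$ on \emph{every} interval of positivity and on \emph{every} interval of negativity (the latter requiring $\mu>\mu^{**}$, cf.\ Section~\ref{section-4.5}) is precisely what allows one to read the combinatorial label off the solution and to translate the aperiodicity of the word into the non-$lT$-periodicity of $u_{\mathcal{L}^{[k]}}$. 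Putting the pieces together will give at least $\mathcal{S}_{2}(k)$ positive subharmonic solutions of order $k$ of \eqref{eq-2.1}, proving the theorem.
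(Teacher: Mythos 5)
Your proposal is correct and follows essentially the same route as the paper: the paper's own proof of Theorem~\ref{th-6.3} simply invokes Theorem~\ref{th-6.2} together with the preceding discussion identifying geometrically distinct subharmonics of order $k$ with binary Lyndon words of length $k$, counted by $\mathcal{S}_{2}(k)$. You merely spell out in full the details the paper leaves implicit (reading the label off a solution via the threshold $r$, translation by $T$ acting as a cyclic shift, and aperiodicity of the word forcing minimal period $kT$), all of which are accurate.
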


\begin{proof}
We have to detect the subharmonic solutions of order $k$ among the $2^{k}-1$ distinct $kT$-periodic positive solutions of \eqref{eq-2.1}
provided by Theorem~\ref{th-6.2}. As remarked above, the number we are looking for is $\mathcal{S}_{2}(k)$. Therefore the thesis immediately follows.
\end{proof}

For the sake of simplicity, above we have considered only the particular case of a continuous periodic
sign-changing function $a(t)$ with minimal period $T$ and such that it has only one positive hump and one negative one
in a period interval. Moreover, we have taken a superlinear function $g(s)$.
We conclude this section by stating the analogous result for more general functions $a(t)$ and $g(s)$.

\begin{theorem}\label{th-6.4}
Let $g \colon {\mathbb{R}}^{+} \to {\mathbb{R}}^{+}$ be a continuous function satisfying $(g_{*})$,
\begin{equation*}
g_{0} = 0 \quad \text{ and } \quad g_{\infty} > \max_{i=1,\ldots,m} \lambda_{1}^{i}.
\end{equation*}
Let $a \colon \mathbb{R} \to \mathbb{R}$ be a $T$-periodic locally integrable function satisfying $(a_{*})$ with minimal period $T$.
Then there exists $\mu^{*}>0$ such that, for all $\mu>\mu^{*}$ and for every $k\geq 2$, equation \eqref{eq-2.1} has
at least $\mathcal{S}_{2^{m}}(k)$ positive subharmonic solutions of order $k$.
\end{theorem}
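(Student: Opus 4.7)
The plan is to mirror the strategy of Theorem~\ref{th-6.1} and Theorem~\ref{th-6.3}, the new feature being that each minimal period of $a(t)$ contains $m\geq 2$ positive humps, so the natural alphabet for encoding solutions has $2^{m}$ letters instead of $2$. First I would view $a(t)$ as a $kT$-periodic function. On the interval $[0,kT]$ assumption $(a_{*})$ is then satisfied with $mk$ positive humps $I^{+}_{i}+(j-1)T$ (for $i=1,\ldots,m$, $j=1,\ldots,k$) separated by an analogous family of $mk$ negative humps. Applying Theorem~\ref{MainTheorem} in this configuration yields, once $\mu$ is large enough, a positive $kT$-periodic solution $u_{\mathcal{I}}\in \Lambda^{\mathcal{I}}_{r,R}$ for every non-empty $\mathcal{I}\subseteq \{1,\ldots,m\}\times\{1,\ldots,k\}$, and the disjointness of the $\Lambda^{\mathcal{I}}_{r,R}$'s makes these $2^{mk}-1$ solutions pairwise distinct.

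The first technical task is to check that the threshold $\mu^{*}$ can be chosen independently of $k$. This proceeds exactly as in the proof of Theorem~\ref{th-6.1}: by the $T$-periodicity of $a$, the basic constants $K_{i},K_{0}$ are the same for every $(i,j)$, hence $r$ and $\mu_{r}$ (Section~\ref{section-4.2}), the a priori bound $R^{*}$ (Section~\ref{section-4.3}), and the quantities $\gamma,\delta^{\pm}_{j},\mu^{\pm}_{j}$ (Section~\ref{section-4.4}) all depend only on one-period data of $a$, and a single $\mu^{*}$ works for every $k\geq 2$ and every choice of set of indices.

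Next I would encode each subset $\mathcal{I}$ as a word $\mathcal{L}(\mathcal{I})$ of length $k$ over the alphabet $\mathcal{A}:=\{0,1\}^{m}$ of size $2^{m}$, the $j$-th letter being the characteristic vector of those humps in the $j$-th period of $a$ that belong to $\mathcal{I}$. Time-translation by $T$ turns $u_{\mathcal{I}}(t)$ into a $kT$-periodic solution whose pattern is a cyclic rotation of $\mathcal{L}(\mathcal{I})$. Consequently, if $u_{\mathcal{I}}$ were $lT$-periodic for some proper divisor $l$ of $k$, then the pattern of large/small humps of $u_{\mathcal{I}}$ would be invariant under a shift by $ml$ positions, forcing $\mathcal{L}(\mathcal{I})$ to have period $l$. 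Contrapositively, whenever $\mathcal{L}(\mathcal{I})$ has minimal period exactly $k$ as a word over $\mathcal{A}$, the solution $u_{\mathcal{I}}$ has minimal period $kT$ and is thus a genuine subharmonic of order $k$, by the discussion at the start of Section~\ref{section-6.1} that exploits the minimality of $T$ as a period of $a$.

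Finally, two such solutions $u_{\mathcal{I}}, u_{\mathcal{I}'}$ coming from words in distinct cyclic-rotation classes cannot lie in the same periodicity class, since the $T$-translates of $u_{\mathcal{I}}$ have patterns that are precisely the cyclic rotations of $\mathcal{L}(\mathcal{I})$. Selecting one index set per $2^{m}$-ary Lyndon word of length $k$ therefore produces at least $\mathcal{S}_{2^{m}}(k)$ pairwise inequivalent positive subharmonic solutions of order $k$, by Witt's formula~\eqref{eq-Lyndon}. The principal obstacle is the $k$-independence of $\mu^{*}$: one must verify that every constant appearing in the proof of Theorem~\ref{MainTheorem} can be bounded in terms of data over a single period of $a$, uniformly in both $k$ and the chosen $\mathcal{I}$. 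The combinatorial count is then a routine adaptation of the binary case $m=1$ already carried out in Theorem~\ref{th-6.3}.
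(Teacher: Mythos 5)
Your proposal is correct and follows essentially the same route as the paper: view $a$ as $kT$-periodic with $mk$ positive humps, invoke Theorem~\ref{MainTheorem} with constants checked to be $k$-independent by $T$-periodicity, encode the solutions as length-$k$ words over the alphabet $\{0,1\}^{m}$, and count cyclic-rotation classes of aperiodic words via $2^{m}$-ary Lyndon words. The only stylistic difference is that the paper routes the construction through a Theorem~\ref{th-6.2}-type statement (which also records smallness on the negativity intervals), but the subharmonicity and counting arguments are identical to yours.
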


\begin{proof}
We only sketch the proof which is mimicked from those of Theorem~\ref{th-6.1} and of Theorem~\ref{th-6.2}, using Theorem~\ref{MainTheorem}.
To start, we need to be careful with the notation. For this reason, we call $J^{+}_{1},\ldots,J^{+}_{m}$ the $m$ intervals of positivity for $a(t)$
in the interval $\mathopen{[}0,T\mathclose{]}$ and $J^{-}_{1},\ldots,J^{-}_{m}$ the $m$ intervals of negativity for $a(t)$,
according to assumption $(a^{*})$. Consider an arbitrary integer $k\geq 2$.
The function $a(t)$ restricted to the interval $\mathopen{[}0,kT\mathclose{]}$ satisfies again
an assumption of the form $(a^{*})$, with respect to $mk$ intervals of positivity/negativity that we denote now with
$I^{\pm}_{1},\ldots,I^{\pm}_{mk}$, defined as
\begin{equation*}
I^{\pm}_{j + \ell m} = J^{\pm}_{j} + \ell T, \qquad j=1,\ldots,m, \quad \ell = 0,\ldots, k-1.
\end{equation*}
In other terms, in the interval $\mathopen{[}0,kT\mathclose{]}$ there are $mk$ closed subintervals
where $a(t) \succ 0$, separated by closed subintervals where $a(t) \prec 0$.
Then we can apply Theorem~\ref{MainTheorem}, looking for $kT$-periodic solutions.
In fact, by our main result, we have at least $2^{mk} -1$ positive periodic solutions of period $kT$
(which up to now is not necessarily the minimal period for the solutions).
More precisely, as in Theorem~\ref{th-6.2}, there exist $0 < r < R$ and $\mu^{**} > 0$ (depending on $m$ but not on $k$) such that,
for all $\mu > \mu^{**}$, given any nontrivial $k$-tuple $\mathcal{L}^{[k]} = (s_{\ell})_{\ell=0,\ldots,k-1}$ in the alphabet $\mathscr{A}:=\{0,1\}^{m}$ of size $2^{m}$
(hence, for $\ell=0,\ldots,k-1$, $s_{\ell}=(s_{\ell}^{j})_{j=1,\ldots,m}$), there exists at least one $kT$-periodic positive solution
\begin{equation*}
u(t) = u_{\mathcal{L}^{[k]}}(t)
\end{equation*}
of equation \eqref{eq-2.1} such that $\|u\|_{\infty} < R$ and
\begin{itemize}
\item $0<u(t)<r$ on $I^{+}_{j + \ell m}$, if $s_{\ell}^{j}=0$;
\item $r < u(\hat{t}) < R$ for some $\hat{t}\in I^{+}_{j + \ell m}$, if $s_{\ell}^{j}=1$;
\item $0<u(t)<r$ on $I^{-}_{i}$, for all $i=1,\ldots,mk$.
\end{itemize}
It remains to see whether, on the basis of the information we have on $u(t)$, we are able
first to determine the minimality of the period and next to distinguish among solutions do not belonging to
the same periodicity class. In view of the above listed properties of the solution $u(t)$,
our first problem is equivalent to choosing a string $\mathcal{L}^{[k]}$ having $k$ as a minimal period (when repeated cyclically).
For the second question, given any string of this kind, we count as the same all those strings (of length $k$) which are equivalent by cyclic permutations.
To choose exactly one string in each of these equivalent classes, we can take the minimal one in the lexicographic order.
As a consequence, we can conclude that there are so many nonequivalent $kT$-periodic solutions which are not $pT$-periodic for every $p= 1,\ldots, k-1$,
how many $2^{m}$-ary Lyndon words of length $k$.
Since we know that the equation does not possess exceptional solutions, we find that
for these subharmonic solutions $kT$ is precisely the minimal period.
\end{proof}

We have listed before some values of $\mathcal{S}_{2}(k)$ which give the number of subharmonic solutions in the setting of Theorem~\ref{th-6.3}.
Concerning the general case addressed in Theorem~\ref{th-6.4}, we observe that the number $\mathcal{S}_{2^{m}}(k)$, with $m\geq 2$, grows very fast with $k$.
For instance, the values of $\mathcal{S}_{2^{2}}(k)$ (number of quaternary Lyndon words of length $k$)
for $k=2,\ldots,10$ are $6$, $20$, $60$, $204$, $670$, $2340$, $8160$, $29120$, $104754$.

\subsection{Positive solutions with complex behavior}\label{section-6.3}

In this section we just outline a possible procedure in order to obtain the existence of
solutions which follow any preassigned coding described by two symbols, say $0$ and $1$,
that in our context will be interpreted as ``small'' and, respectively, ``large'' in the intervals where the weight is positive.
In other terms we are looking for the presence of a Bernoulli shift as a factor within the set of positive and bounded solutions.
Results in this direction are classical in the theory of dynamical systems (cf.~\cite{De-1989, Mo-1973, Sm-1967}) and have been achieved
in the variational setting as well (see, for instance, \cite{BoSeTe-2002, ByRa-2013, Se-1993}).
Even if the obtention of chaotic dynamics using topological degree or index theories is an established technique
(see \cite{CaKwMi-2000, SrWoZg-2005} and the references therein),
the achievement of similar results with our approach seems new in the literature.

Our proof is based on the above results about subharmonic solutions and on the following diagonal lemma, which is typical
in this context. Lemma~\ref{lem-6.1} is adapted from \cite[Lemma~8.1]{Kr-1968} and \cite[Lemma~4]{Ma-1996}.

\begin{lemma}\label{lem-6.1}
Let $f\colon\mathbb{R}\times\mathbb{R}^{d}\to\mathbb{R}^{d}$ be an $L^{1}$-Carath\'{e}odory function.
Let $(t_{n})_{n\in\mathbb{N}}$ be an increasing sequence of positive numbers and $(x_{n})_{n\in\mathbb{N}}$ be a sequence
of functions from $\mathbb{R}$ to $\mathbb{R}^{d}$ with the following properties:
\begin{itemize}
\item[$(i)$] $t_{n}\to+\infty$ as $n\to\infty$;
\item[$(ii)$] for each $n\in\mathbb{N}$, $x_{n}(t)$ is a solution of
\begin{equation}\label{eq-lem-6.1}
x'=f(t,x)
\end{equation}
defined on $\mathopen{[}-t_{n},t_{n}\mathclose{]}$;
\item[$(iii)$] for every $N\in\mathbb{N}$ there exists a bounded set $B_{N}\subseteq\mathbb{R}^{d}$ such that,
for each $n\geq N$, it holds that $x_{n}(t)\in B_{N}$ for every $t\in\mathopen{[}-t_{N},t_{N}\mathclose{]}$.
\end{itemize}
Then there exists a subsequence $(\tilde{x}_{n})_{n\in\mathbb{N}}$ of $(x_{n})_{n\in\mathbb{N}}$ which converges uniformly
on the compact subsets of $\mathbb{R}$ to a solution $\tilde{x}(t)$ of system \eqref{eq-lem-6.1};
in particular $\tilde{x}(t)$ is defined on $\mathbb{R}$ and, for each $N\in\mathbb{N}$,
it holds that $\tilde{x}(t)\in\overline{B_{N}}$ for all $t\in\mathopen{[}-t_{N},t_{N}\mathclose{]}$.
\end{lemma}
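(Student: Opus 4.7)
The plan is to use a standard diagonal Arzel\`a--Ascoli argument. First, I would fix an index $N\in\mathbb{N}$ and look at the sequence $(x_{n})_{n\geq N}$ restricted to the compact interval $\mathopen{[}-t_{N},t_{N}\mathclose{]}$. By hypothesis $(iii)$, all these functions take values in the bounded set $B_{N}$, hence in the compact set $\overline{B_{N}}$. Since $f$ is $L^{1}$-Carath\'eodory, there is an $L^{1}$-majorant $m_{N}\in L^{1}(\mathopen{[}-t_{N},t_{N}\mathclose{]})$ with $|f(t,x)|\leq m_{N}(t)$ for almost every $t$ and every $x\in\overline{B_{N}}$. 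Writing each $x_{n}$ as the solution of the associated integral equation, we obtain, for every $n\geq N$ and every $s,t\in\mathopen{[}-t_{N},t_{N}\mathclose{]}$,
\begin{equation*}
|x_{n}(t)-x_{n}(s)| \leq \biggl|\int_{s}^{t} m_{N}(\xi)~\!d\xi\biggr|,
\end{equation*}
and the absolute continuity of the Lebesgue integral immediately gives equicontinuity of the family $(x_{n})_{n\geq N}$ on $\mathopen{[}-t_{N},t_{N}\mathclose{]}$.

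Next I would apply Arzel\`a--Ascoli on each interval $\mathopen{[}-t_{N},t_{N}\mathclose{]}$ to extract nested subsequences: starting from $N=1$, pick a subsequence of $(x_{n})$ that converges uniformly on $\mathopen{[}-t_{1},t_{1}\mathclose{]}$, then from that subsequence extract a further one that converges uniformly on $\mathopen{[}-t_{2},t_{2}\mathclose{]}$, and so on. The standard Cantor diagonal procedure then produces a single subsequence $(\tilde{x}_{n})_{n\in\mathbb{N}}$ that, for every fixed $N$, eventually lives inside the $N$-th nested subsequence and therefore converges uniformly on $\mathopen{[}-t_{N},t_{N}\mathclose{]}$ to some continuous limit $\tilde{x}$, defined on all of $\mathbb{R}$ because $t_{N}\to+\infty$. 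Moreover, for each fixed $N$, $\tilde{x}_{n}(t)\in B_{N}$ for all $t\in\mathopen{[}-t_{N},t_{N}\mathclose{]}$ and every sufficiently large $n$, so the uniform limit satisfies $\tilde{x}(t)\in\overline{B_{N}}$ on $\mathopen{[}-t_{N},t_{N}\mathclose{]}$.

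Finally, I would check that $\tilde{x}$ is a Carath\'eodory solution of \eqref{eq-lem-6.1}. Fix any $s,t\in\mathbb{R}$ and choose $N$ large enough that $s,t\in\mathopen{[}-t_{N},t_{N}\mathclose{]}$. For $n$ large we have
\begin{equation*}
\tilde{x}_{n}(t)=\tilde{x}_{n}(s)+\int_{s}^{t} f(\xi,\tilde{x}_{n}(\xi))~\!d\xi.
\end{equation*}
Uniform convergence $\tilde{x}_{n}\to\tilde{x}$ on $\mathopen{[}-t_{N},t_{N}\mathclose{]}$ combined with the fact that $f(\xi,\cdot)$ is continuous for a.e.~$\xi$, together with the dominating function $m_{N}$, allows passage to the limit under the integral sign via Lebesgue's dominated convergence theorem, yielding $\tilde{x}(t)=\tilde{x}(s)+\int_{s}^{t} f(\xi,\tilde{x}(\xi))~\!d\xi$; hence $\tilde{x}$ is a Carath\'eodory solution on $\mathbb{R}$.

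The only delicate point is the passage to the limit inside the integral: continuity of $f$ in the second variable holds only for almost every $t$, and the $\tilde{x}_{n}(\xi)$ must stay inside a fixed compact set where the $L^{1}$-dominant $m_{N}$ controls $|f(\xi,\tilde{x}_{n}(\xi))|$. The nested choice of $B_{N}$ and the uniformity of the convergence on each $\mathopen{[}-t_{N},t_{N}\mathclose{]}$ are precisely what make dominated convergence applicable; everything else is bookkeeping in the diagonal extraction.
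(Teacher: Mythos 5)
Your proposal is correct and follows essentially the same route as the paper's own proof: equicontinuity via the $L^{1}$-Carath\'eodory majorant and the integral equation, Ascoli--Arzel\`a on the nested intervals $\mathopen{[}-t_{N},t_{N}\mathclose{]}$ with a Cantor diagonal extraction, and passage to the limit in the integral identity by dominated convergence. No gaps.
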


\begin{proof}
This result is classical and perhaps a proof is not needed.
We give a sketch of the proof for the reader's convenience, following \cite[Lemma~4]{Ma-1996}.

First of all we observe that, by the Carath\'{e}odory assumption, for each $N\in\mathbb{N}$ there exists
a measurable function $\rho_{N}\in L^{1}(\mathopen{[}-t_{N},t_{N}\mathclose{]},{\mathbb{R}}^{+})$ such that
\begin{equation*}
\|f(t,x)\|\leq \rho_{N}(t), \quad\text{ for a.e. } t\in [-t_{N},t_{N}] \text{ and for all } x\in B_{N}.
\end{equation*}
For every $N\in\mathbb{N}$ we also introduce the absolutely continuous function
\begin{equation*}
\mathcal{M}_{N}(t):= \int_{0}^{t} \rho_{N}(\xi) ~\!d\xi, \quad t\in \mathopen{[}-t_{N},t_{N}\mathclose{]}.
\end{equation*}
By hypothesis $(ii)$, we have that
\begin{equation*}
x_{n}(t) = x_{n}(0) + \int_{0}^{t} f(\xi,x_{n}(\xi)) ~\!d\xi, \quad \forall \, t\in\mathopen{[}-t_{n},t_{n}\mathclose{]},\; \forall \, n\in\mathbb{N},
\end{equation*}
and, by hypothesis $(iii)$, for every $N\in\mathbb{N}$ it follows that
\begin{equation*}
|x_{n}(t')-x_{n}(t'')| \leq |\mathcal{M}_{N}(t') - \mathcal{M}_{N}(t'')|, \quad \forall \, t',t''\in\mathopen{[}-t_{N},t_{N}\mathclose{]}, \; \forall \, n\geq N,
\end{equation*}
(cf.~\cite[p.~29]{Ha-1980}).
Consequently, the sequence $(x_{n})_{n\in\mathbb{N}}$ restricted to the interval $\mathopen{[}-t_{0},t_{0}\mathclose{]}$ is uniformly bounded
(by any constant which bounds in the Euclidean norm the set $B_{0}$) and equicontinuous.
By Ascoli-Arzel\`{a} theorem, it has a subsequence $(x_{n}^{0})_{n\in\mathbb{N}}$ which converges uniformly on $\mathopen{[}-t_{0},t_{0}\mathclose{]}$
to a continuous function named $\hat{x}_{0}$.
Similarly, the sequence $(x_{n}^{0})_{n\geq1}$ restricted to $\mathopen{[}-t_{1},t_{1}\mathclose{]}$ is a uniformly bounded and equicontinuous sequence
and has a subsequence $(x_{n}^{1})_{n\geq1}$ which converges uniformly on $\mathopen{[}-t_{1},t_{1}\mathclose{]}$
to a continuous function $\hat{x}_{1}$ such that $\hat{x}_{1}(t) = \hat{x}_{0}(t)$ for all $t\in \mathopen{[}-t_{0},t_{0}\mathclose{]}$.
Proceeding inductively in this way, we construct a sequence of sequences $(x_{n}^{N})_{n\geq N}$ so that
$(x_{n}^{N})_{n\geq N}$ is a subsequence of $(x_{n}^{N-1})_{n\geq N-1}$
and converges uniformly on $\mathopen{[}-t_{N},t_{N}\mathclose{]}$ to a continuous function $\hat{x}_{N}$ such that
$\hat{x}_{N}(t) = \hat{x}_{N-1}(t)$ for all $t\in \mathopen{[}-t_{N-1},t_{N-1}\mathclose{]}$.
By construction, we have that $\hat{x}_{N}(t)\in\overline{B_{N}}$ for all $t\in\mathopen{[}-t_{N},t_{N}\mathclose{]}$.
The diagonal sequence $(\tilde{x}_{n})_{n\in\mathbb{N}}:=(x_{n}^{n})_{n\in\mathbb{N}}$ converges uniformly on every compact interval to a function $\tilde{x}$ defined on $\mathbb{R}$
and such that $\tilde{x}(t) = \hat{x}_{N}(t)$ for all $t\in\mathopen{[}-t_{N},t_{N}\mathclose{]}$ and therefore,
$\tilde{x}(t)\in\overline{B_{N}}$ for all $t\in\mathopen{[}-t_{N},t_{N}\mathclose{]}$.
It remains to prove that $\tilde{x}(t)$ is a solution of \eqref{eq-lem-6.1} on $\mathbb{R}$.
Indeed, let $t\in \mathbb{R}$ be arbitrary but fixed and let us fix $N\in \mathbb{N}$ such that $t\in\mathopen{[}-t_{N},t_{N}\mathclose{]}$.
Passing to the limit as $n\to\infty$ in the identity
\begin{equation*}
\tilde{x}_{n}(t) = \tilde{x}_{n}(0) + \int_{0}^{t} f(\xi,\tilde{x}_{n}(\xi)) ~\!d\xi, \quad \forall \, n\geq N,
\end{equation*}
via the Lebesgue dominated convergence theorem, we obtain
\begin{equation*}
\tilde{x}(t) = \tilde{x}(0) + \int_{0}^{t} f(\xi,\tilde{x}(\xi)) ~\!d\xi.
\end{equation*}
For the arbitrariness of $t\in \mathbb{R}$ and the above integral relation, we conclude that
$\tilde{x}(t)$ is absolutely continuous and a solution of \eqref{eq-lem-6.1} (in the Carath\'{e}odory sense).
\end{proof}

If there exists a bounded set $B$ such that $B_{N} \subseteq B$ for all $N\in \mathbb{N}$,
then we have the stronger conclusion that $\tilde{x}(t)\in\overline{B}$ for all $t\in\mathbb{R}$
(which is precisely the result of \cite[Lemma~8.1]{Kr-1968} and \cite[Lemma~4]{Ma-1996}).

\medskip

An application of Lemma~\ref{lem-6.1} to the planar system
\begin{equation}\label{planar-system}
\begin{cases}
\, u' = y \\
\, y' = - cy - \bigl{(}a^{+}(t)-\mu a^{-}(t)\bigr{)}g(u)
\end{cases}
\end{equation}
will produce bounded solutions with any prescribed complex behavior.
In order to simplify the exposition, we suppose that the coefficient $a(t)$ is a continuous $T$-periodic function
of minimal period $T$ having a positive hump followed by a negative one in a period interval
(these are the same assumptions for the weight coefficient as in Theorem~\ref{th-6.1}). In this framework, the next result follows.

\begin{theorem}\label{th-6.5}
Let $g \colon {\mathbb{R}}^{+} \to {\mathbb{R}}^{+}$ be a continuous function satisfying $(g_{*})$,
\begin{equation*}
g_{0} = 0 \quad \text{ and } \quad g_{\infty} = +\infty.
\end{equation*}
Let $a \colon \mathbb{R} \to \mathbb{R}$ be a $T$-periodic continuous function with minimal period $T$ such that there exist two consecutive zeros $\alpha < \beta$
so that $a(t) > 0$ for all $t \in \mathopen{]}\alpha,\beta\mathclose{[}$ and $a(t) < 0$ for all $t\in\mathopen{]}\beta,\alpha + T\mathclose{[}$.
Then there exist $0 < r < R$ and $\mu^{**} > 0$ such that, for all $\mu > \mu^{**}$, given any two-sided sequence
$\mathcal{L} = (s_{i})_{i\in \mathbb{Z}}\in \{0,1\}^{\mathbb{Z}}$ which is not identically zero,
there exists at least one positive solution $u(t) = u_{\mathcal{L}}(t)$
of equation \eqref{eq-2.1} such that $\|u\|_{\infty} < R$ and
\begin{itemize}
\item $0<u(t)<r$ on $\mathopen{[}\alpha + iT,\beta+iT\mathclose{]}$, if ${s}_{i}=0$;
\item $r<u(\hat{t}_{i})<R$ for some $\hat{t}_{i}\in \mathopen{[}\alpha + iT,\beta+iT\mathclose{]}$, if ${s}_{i}=1$;
\item $0<u(t)<r$ on $\mathopen{[}\beta+iT,\alpha + (i+1)T\mathclose{]}$, for all $i\in \mathbb{Z}$.
\end{itemize}
\end{theorem}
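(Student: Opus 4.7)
The plan is to obtain the solution $u_{\mathcal{L}}$ as a limit of subharmonic solutions produced by Theorem~\ref{th-6.2}, via the diagonal argument in Lemma~\ref{lem-6.1}, applied to the planar system \eqref{planar-system}. As in the proof of Theorem~\ref{th-6.1}, I shift the time variable so that $\alpha=0$ (and $\beta=\tau$), which places the weight into the exact setting of Theorems~\ref{th-6.1} and \ref{th-6.2}. I fix the constants $r<R$ and $\mu^{**}$ provided by Theorem~\ref{th-6.2}, both of which are absolute and independent of the period, and I pick an arbitrary $\mu>\mu^{**}$.

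For each integer $n\geq 1$, consider the finite string $\mathcal{L}^{[2n+1]}=(s_{-n},s_{-n+1},\ldots,s_{n})$. Since $\mathcal{L}$ is not identically zero, $\mathcal{L}^{[2n+1]}\neq 0^{[2n+1]}$ for all $n$ large enough; discard the finitely many indices where this fails. Apply Theorem~\ref{th-6.2} with $k=2n+1$ to the cyclic rotation of $\mathcal{L}^{[2n+1]}$ that places $s_{-n}$ in the first slot, obtaining a $(2n+1)T$-periodic positive solution $v_{n}(t)$ with $\|v_{n}\|_{\infty}<R$; then define $u_{n}(t):=v_{n}(t+nT)$, so that $u_{n}$ is again a $(2n+1)T$-periodic solution of \eqref{eq-2.1} obeying the prescribed pattern dictated by $\mathcal{L}^{[2n+1]}$ on the intervals $\mathopen{[}iT,\tau+iT\mathclose{]}$ for $i=-n,\ldots,n$, and satisfying $u_{n}(t)<r$ on every negativity interval contained in $\mathopen{[}-nT,(n+1)T\mathclose{]}$.

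To apply Lemma~\ref{lem-6.1}, I set $x_{n}(t):=(u_{n}(t),u'_{n}(t))$ on $\mathopen{[}-t_{n},t_{n}\mathclose{]}$ with $t_{n}:=nT$. The uniform bound $\|u_{n}\|_{\infty}<R$, together with the bound on $\|u'_{n}\|_{L^{\infty}}$ deduced from integrating \eqref{planar-system} against the $L^{1}$-Carath\'eodory bound of its right-hand side on any fixed $\mathopen{[}-t_{N},t_{N}\mathclose{]}$, furnishes the required bounded sets $B_{N}\subseteq\mathbb{R}^{2}$. The lemma then yields a subsequence converging uniformly on compacta to a solution $\tilde{x}=(\tilde{u},\tilde{u}')$ of \eqref{planar-system} defined on all of $\mathbb{R}$, with $0\leq \tilde{u}(t)\leq R$ everywhere.

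The main obstacle is upgrading the weak inequalities that survive the limit to the strict ones required in the statement. I set $u_{\mathcal{L}}:=\tilde{u}$. Uniform convergence on compacta preserves the weak inequalities $\tilde{u}(t)\leq r$ on the positivity intervals labeled by $s_{i}=0$, $\max \tilde{u}\geq r$ on those labeled by $s_{i}=1$, and $\tilde{u}(t)\leq r$ on every negativity interval. Now I invoke Remark~\ref{rem-4.10} applied to the non-negative non-periodic solution $\tilde{u}$ with $\tilde{u}<R^{*}$: it asserts that the maximum of $\tilde{u}$ on any positivity interval is different from $r$, which converts the weak inequalities into the strict ones $\tilde{u}<r$ and $\max \tilde{u}>r$ respectively. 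Similarly, Remark~\ref{rem-4.12} yields the strict bound $\tilde{u}(t)<r$ on each negativity interval. Finally, strict positivity $\tilde{u}(t)>0$ on $\mathbb{R}$ follows from the fact that $\tilde{u}\not\equiv 0$ (since $\mathcal{L}$ is non-trivial, $\max \tilde{u}>r$ on at least one interval) combined with the maximum-principle argument already invoked in Section~\ref{section-3} to pass from non-negative to strictly positive solutions of the coincidence equation. The delicate point is precisely that Remarks~\ref{rem-4.10} and \ref{rem-4.12} were formulated so as to apply to arbitrary non-negative solutions on $\mathbb{R}$ bounded by $R^{*}$, which is exactly the regime in which the limit $\tilde{u}$ lives; without this formulation, the strict separation of the two amplitude regimes in the limit would be the central difficulty.
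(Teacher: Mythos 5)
Your proposal reproduces the paper's own proof almost step for step: truncate and periodize the two-sided sequence, apply Theorem~\ref{th-6.2} for each $k=2n+1$, pass to the limit with the diagonal Lemma~\ref{lem-6.1} applied to the planar system \eqref{planar-system}, and then upgrade the weak inequalities surviving the limit to strict ones via the maximum principle together with Remarks~\ref{rem-4.8}, \ref{rem-4.10} and \ref{rem-4.12} (whose ``non-periodic'' formulations are, as you correctly emphasize, the whole point). The structure, the key lemmas invoked, and the final separation argument all coincide with the paper's.

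The one step you gloss over is the verification of hypothesis $(iii)$ of Lemma~\ref{lem-6.1}, i.e.\ the uniform bound on $u_{n}'$ over a fixed compact window $\mathopen{[}-NT,(N+1)T\mathclose{]}$ for all $n\geq N$. As written, ``integrating \eqref{planar-system} against the $L^{1}$-Carath\'{e}odory bound of its right-hand side'' is circular: the second component of the right-hand side contains $-cy$, so a Carath\'{e}odory majorant on $B_{N}$ presupposes the very bound on $y=u_{n}'$ you are trying to establish, and in any case integrating $u_{n}''$ only controls $u_{n}'(t)-u_{n}'(t_{0})$, not $u_{n}'(t)$ itself. Two ingredients are needed: an anchor point in a \emph{fixed} compact interval (independent of $n$) where $u_{n}'$ is controlled uniformly --- the paper uses the local maximum of $u_{n}$ forced by a digit $s_{i}=1$ with $|i|\leq n_{0}$, where $u_{n}'$ vanishes; the mean value theorem on a length-$T$ subinterval would serve equally well --- and then a Gronwall/Nagumo step to propagate the bound (the paper integrates $|u_{n}''|/(1+|u_{n}'|)\leq\psi_{\mu}(t)$ to obtain the explicit constant $K_{N}$). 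This is routine to fill in, but it is the only genuine estimate in the whole proof and occupies a substantial part of the paper's argument, so it should not be reduced to a single clause.
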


\begin{proof}
Without loss of generality, we suppose that $\alpha = 0$ and set $\tau:=\beta - \alpha$, so that $a(t) > 0$ on $\mathopen{]}0,\tau\mathclose{[}$ and
$a(t) < 0$ on $\mathopen{]}\tau,T\mathclose{[}$. We also introduce the intervals
\begin{equation}\label{eq-intI}
I^{+}_{i}:= \mathopen{[}iT,\tau + iT\mathclose{]}, \quad I^{-}_{i}:= \mathopen{[}\tau + iT,(i+1)T\mathclose{]}, \quad i\in \mathbb{Z}.
\end{equation}
Let $0 < r < R$ and $\mu^{**}>0$ as in Theorem~\ref{th-6.1} and Theorem~\ref{th-6.2}.
One more time, we wish to emphasize the fact that, once we have fixed $r$, $R$ and $\mu > \mu^{**}$,
we can produce $kT$-periodic solutions following any $k$-periodic sequence of two symbols, \textit{independently on $k$}.
Accordingly, from this moment to the end of the proof, $r$, $R$ and $\mu > \mu^{**}$ are fixed.

Consider now an arbitrary sequence $\mathcal{L} = (s_{i})_{i\in\mathbb{Z}}\in \{0,1\}^{\mathbb{Z}}$ which is not identically zero.
We fix a positive integer $n_{0}$ such that there is at least an index
$i\in\{-n_{0},\ldots,n_{0}\}$ such that $s_{i}=1$. Then, for each $n \geq n_{0}$
we consider the $(2n+1)$-periodic sequence ${\mathcal{L}}^{n} =(s'_{i})_{i}\in \{0,1\}^{\mathbb{Z}}$
which is obtained by truncating $\mathcal{L}$ between $-n$ and $n$, and then repeating that string by periodicity.
An application of Theorem~\ref{th-6.2} on the periodicity interval $\mathopen{[}-nT,(n+1)T\mathclose{]}$ ensures the existence of
a positive periodic solution $u_{n}(t)$ such that $u_{n}(t + (2n+1)T) = u_{n}(t)$ for all $t\in\mathbb{R}$ and $\|u_{n}\|_{\infty} < R$.
According to Theorem~\ref{th-6.2}, we also know that $u_{n}(t) < r$ for all $t\in I^{+}_{i}$, if $s'_{i} = 0$,
$u_{n}(\hat{t}) > r$ for some $\hat{t}\in I^{+}_{i}$, if $s'_{i} = 1$, and $\max_{t\in I^{-}_{i}} u_{n}(t) < r$ (for each $i\in\mathbb{Z}$).

Notice that, for $C:= \max_{0\leq s\leq R} g(s)$, we have that
\begin{equation*}
|u''_{n}(t)|\leq |c| |u'_{n}(t)| + \bigl{(}|a^{+}(t)| + \mu |a^{-}(t)| \bigr{)} C, \quad \forall \, t\in\mathbb{R},
\end{equation*}
and hence,
\begin{equation}\label{eq-6.5}
\dfrac{|u''_{n}(t)|}{1 + |u'_{n}(t)|} \leq \psi_{\mu}(t), \quad \forall \, t \in \mathbb{R},
\end{equation}
where we have set $\psi_{\mu}(t): = |c| + (|a^{+}(t)| + \mu |a^{-}(t)|) C$.

Since the truncated string ${\mathcal{L}}^{n}$ contains at least one $s'_{i} = s_{i} = 1$, with $i\in\{-n_{0},\ldots,n_{0}\}$,
we know that each periodic function $u_{n}(t)$ has at least a local maximum point $\hat{t}_{n}\in \mathopen{]}-n_{0}T,n_{0}T + \tau\mathclose{[}$
and then $u'_{n}(\hat{t}_{n}) = 0$. Suppose now that $N \geq n_{0}$ is fixed and define the constant
\begin{equation*}
K_{N}:= \exp\biggl{(} (2N+1) \int_{0}^{T}\psi_{\mu}(t)~\!dt \biggr{)}.
\end{equation*}
We claim that
\begin{equation}\label{eq-6.6}
|u'_{n}(t)|\leq K_{N}, \quad \forall \, t\in \mathopen{[}-NT,(N+1)T\mathclose{]}, \; \forall \, n \geq N.
\end{equation}
Our claim follows from a Nagumo type argument as in \cite[ch.~I, \S~4]{DCHa-2006}.
Suppose, by contradiction, that \eqref{eq-6.6} is not true.
Hence, there exist some $n \geq N$ and a point $t^{*}_{n} \in \mathopen{[}-NT,(N+1)T\mathclose{]}$
such that $u'_{n}(t^{*}_{n}) > K_{N}$ or $u'_{n}(t^{*}_{n}) < -K_{N}$. In the first case there exists a maximal
interval $J \subseteq \mathopen{[}-NT,(N+1)T\mathclose{]}$ such that one of the following two possibilities occurs:
\begin{itemize}
\item $J = \mathopen{[}\xi_{0},\xi_{1}\mathclose{]}$ and
$u'_{n}(\xi_{0}) = 0$, $u'_{n}(\xi_{1}) > K_{N}$ with $u'_{n}(t) > 0$ for all $t\in \mathopen{]}\xi_{0},\xi_{1}\mathclose{]}$;
\item $J = \mathopen{[}\xi_{1},\xi_{0}\mathclose{]}$ and
$u'_{n}(\xi_{0}) = 0$, $u'_{n}(\xi_{1}) > K_{N}$ with $u'_{n}(t) > 0$ for all $t\in \mathopen{[}\xi_{1},\xi_{0}\mathclose{[}$.
\end{itemize}
Integrating $u''_{n}/(1 + |u'_{n}|)$ on $J$ and using \eqref{eq-6.5}, we obtain
\begin{equation*}
\begin{aligned}
\log(1 + K_{N}) &< \log(1 + |u'_{n}(\xi_{1})|) \leq \int_{J} \psi_{\mu}(t)~\!dt \\
&\leq \int_{-NT}^{(N+1)T} \psi_{\mu}(t)~\!dt = (2N+1) \int_{0}^{T}\psi_{\mu}(t)~\!dt = \log(K_{N}),
\end{aligned}
\end{equation*}
a contradiction. We have achieved a contradiction by assuming $u'_{n}(t^{*}_{n}) > K_{N}$.
A similar argument gives a contradiction if $u'_{n}(t^{*}_{n}) < -K_{N}$.

Now we write equation \eqref{eq-2.1} as a planar system \eqref{planar-system}. From the above remarks,
one can see that (up to a reparametrization of indices, counting from $n_{0}$)
assumptions $(i)$, $(ii)$ and $(iii)$ of Lemma~\ref{lem-6.1} are satisfied,
taking $t_{n}:=nT$, $f(t,x)=(y,-cy-(a^{+}(t)-\mu a^{-}(t))g(u))$, with $x=(u,y)$, and
\begin{equation*}
B_{N} := \bigl{\{} x\in\mathbb{R}^{2} \colon 0 < x_{1} < R, \; |x_{2}| \leq K_{N} \bigr{\}}, \quad N\in\mathbb{N},
\end{equation*}
as bounded set in $\mathbb{R}^{2}$.
By Lemma~\ref{lem-6.1}, there is a solution $\tilde{u}(t)$ of equation \eqref{eq-2.1} which is defined on $\mathbb{R}$ and such that
$0 \leq \tilde{u}(t) \leq R$ for all $t\in\mathopen{[}-NT,NT\mathclose{]}$, for each $N\in\mathbb{N}$. Then $\|\tilde{u}\|_{\infty} \leq R$.
Moreover, such a solution $\tilde{u}(t)$ is the limit of a subsequence $(\tilde{u}_{n})_{n}$
of the sequence of the periodic solutions $u_{n}(t)$.

We claim that
\begin{itemize}
\item $0<\tilde{u}(t)<r$ on $I^{+}_{i}$, if $s_{i}=0$;
\item $r<\tilde{u}(\hat{t}_{i})<R$ for some $\hat{t}_{i}\in I^{+}_{i}$, if $s_{i}=1$;
\item $0<\tilde{u}(t)<r$ on $I^{-}_{i}$, for all $i\in \mathbb{Z}$.
\end{itemize}
To prove our claim, let us fix $i\in \mathbb{Z}$ and consider the interval $I^{+}_{i}$ introduced in \eqref{eq-intI}.
For each $n \geq |i|$ (and $n\geq n_{0}$) the periodic solution $u_{n}(t)$ is defined on $\mathbb{R}$ and
such that $0 < u_{n}(t) < r$ for all $t\in I^{+}_{i}$, if ${s}_{i}=0$, or $\max_{t\in I^{+}_{i}} u_{n}(t) > r$, if ${s}_{i}=1$.
Passing to the limit on the subsequence $(\tilde{u}_{n})_{n}$, we obtain that
\begin{equation*}
0 \leq \tilde{u}(t) \leq r, \quad \forall \, t\in I^{+}_{i}, \; \text{ if } s_{i}=0,
\end{equation*}
or
\begin{equation*}
\max_{t\in I^{+}_{i}} \tilde{u}(t) \geq r, \quad \text{if } s_{i}=1,
\end{equation*}
respectively. With the same argument we also prove that
\begin{equation*}
0 \leq \tilde{u}(t)\leq r, \quad \forall \, t\in I^{-}_{i}, \; \forall \, i\in \mathbb{Z}.
\end{equation*}
By Remark~\ref{rem-4.8} we get that $\tilde{u}(t) < R^{*} \leq R$, for all $t\in \mathbb{R}$. Moreover,
since there exists at least one index $i\in \mathbb{Z}$ such that $s_{i} = 1$, we know that $\tilde{u}$ is not identically zero.
Hence, a maximum principle argument shows that $\tilde{u}(t)$ never vanishes. In conclusion, we have proved that
\begin{equation*}
0 < \tilde{u}(t) < R, \quad \forall \, t\in \mathbb{R}.
\end{equation*}
Next, we observe that
\begin{equation*}
\max_{t\in I^{+}_{i}} \tilde{u}(t) \neq r, \quad \forall \, i\in \mathbb{Z}.
\end{equation*}
Indeed, this is a consequence of Remark~\ref{rem-4.10}, using the fact that the solution $\tilde{u}(t)$ is
upper bounded by $R^{*}$ and, at the beginning, $\mu$ has been chosen large enough (note also that we apply that result in the case
$m =1$ and therefore the sets $I^{+}_{i} + \ell T$ of Remark~\ref{rem-4.10} reduce, in our case,
to the intervals $\mathopen{[}0,\tau\mathclose{]} + \ell T$).
Finally, using Remark~\ref{rem-4.12} we also deduce that
\begin{equation*}
\tilde{u}(t)<r, \quad \forall \, t\in I^{-}_{i}, \; \forall \, i\in \mathbb{Z}.
\end{equation*}
Our claim is thus verified and this completes the proof of the theorem.
\end{proof}

For the equation
\begin{equation*}
u'' + \bigl{(} a^{+}(t) - \mu a^{-}(t)\bigr{)} u^{3} = 0,
\end{equation*}
a version of Theorem~\ref{th-6.5} has been recently obtained in \cite{BaBoVe-jde2015}, under the supplementary condition that
in the strings of symbols the consecutive sequences of zeros are bounded in length. The proof of \cite[Theorem~2.1]{BaBoVe-jde2015}
and ours are completely different (the former one relies on variational techniques, ours on degree theory).
Our new contribution is twofold: on one side, we can deal with non Hamiltonian systems (indeed we can consider also a term of the form
$c u'$) and with a nonlinearity $g(s)$ which is not positively homogeneous; on the other hand, our approach allows to remove the
condition on bounded sequences of consecutive zeros. In any case, the two results are not completely comparable since
the way to associate a solution to a given string of symbols is different: the symbols $0$ and $1$ in our case are associated
to the maximum of a solution on $I^{+}_{i}$, while in \cite[Theorem~2.1]{BaBoVe-jde2015} are associated to an integral norm
on the same interval.

\medskip

We remark that Theorem~\ref{th-6.5} can be generalized at the same extent like Theorem~\ref{th-6.4} generalizes Theorem~\ref{th-6.3}.
Indeed, combining the proofs of Theorem~\ref{th-6.4} and Theorem~\ref{th-6.5}, we can obtain the following result (the proof is omitted).

\begin{theorem}\label{th-6.6}
Let $g \colon {\mathbb{R}}^{+} \to {\mathbb{R}}^{+}$ be a continuous function satisfying $(g_{*})$,
\begin{equation*}
g_{0} = 0 \quad \text{ and } \quad g_{\infty} > \max_{i=1,\ldots,m} \lambda_{1}^{i}.
\end{equation*}
Let $a \colon \mathbb{R} \to \mathbb{R}$ be a $T$-periodic locally integrable function satisfying $(a_{*})$ with minimal period $T$.
Then there exist $0 < r < R$ and $\mu^{**} > 0$ such that, for all $\mu > \mu^{**}$, given any two-sided sequence
$\mathcal{L} = (s_{\ell})_{\ell\in\mathbb{Z}}$ in the alphabet $\mathscr{A}:=\{0,1\}^{m}$
and not identically zero,
there exists at least one positive solution $u(t) = u_{\mathcal{L}}(t)$
of equation \eqref{eq-2.1} such that $\|u\|_{\infty} < R$ and the following properties hold
(where we set $s_{\ell}=(s_{\ell}^{i})_{i=1,\ldots,m}$, for each $\ell\in\mathbb{Z}$):
\begin{itemize}
\item $0<u(t)<r$ on $I^{+}_{i} + \ell T$, if $s_{\ell}^{i}=0$;
\item $r < u(\hat{t}) < R$ for some $\hat{t}\in I^{+}_{i} + \ell T$, if $s_{\ell}^{i}=1$;
\item $0<u(t)<r$ on $I^{-}_{i} + \ell T$, for all $i\in\{1,\ldots,m\}$ and for all $\ell \in \mathbb{Z}$.
\end{itemize}
\end{theorem}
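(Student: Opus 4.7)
The plan is to mimic the diagonal/approximation argument used for Theorem~\ref{th-6.5}, but feeding in the multi-hump periodic solutions produced by Theorem~\ref{th-6.4} (in the guise of its symbolic version, analogous to Theorem~\ref{th-6.2}) rather than the single-hump ones of Theorem~\ref{th-6.2}. First I would fix, once and for all, constants $0 < r < R$ and $\mu^{**} > 0$ as provided by the proof of Theorem~\ref{th-6.4}; the crucial point, already verified in that proof, is that these constants depend on $a(t)$, $c$ and $g(s)$ but \emph{not} on the period $kT$ over which Theorem~\ref{MainTheorem} is being applied, nor on the particular subset of indices $\mathcal{I}$ selected. Pick $\mu > \mu^{**}$ and keep it fixed throughout.

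Given a two-sided, non-identically-zero sequence $\mathcal{L}=(s_\ell)_{\ell\in\mathbb{Z}}$ in $\mathscr{A}=\{0,1\}^m$, choose $n_0$ large enough so that some $s_\ell\neq 0$ with $|\ell|\leq n_0$. For each $n\geq n_0$ I would form the truncation $\mathcal{L}^n$ obtained by cutting $\mathcal{L}$ between $-n$ and $n$ and then repeating it with period $2n+1$; the resulting $(2n+1)$-periodic string over $\mathscr{A}$ is nontrivial. Applying the symbolic version of Theorem~\ref{th-6.4} (which is the statement analogous to Theorem~\ref{th-6.2} that drops out of the proof of Theorem~\ref{MainTheorem} when we choose $\mathcal{I}=\mathcal{I}_{\mathcal{L}^n}$ inside a period of length $(2n+1)T$), I obtain a positive $(2n+1)T$-periodic solution $u_n(t)$ of \eqref{eq-2.1} with $\|u_n\|_\infty < R$, satisfying the three bullet conditions of the theorem on each interval $I^+_i+\ell T$ and $I^-_i+\ell T$ for $-n\le\ell\le n$ (and repeated periodically outside).

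Next I would produce uniform $C^1$ bounds on compact subsets of $\mathbb{R}$. Since $0<u_n<R$ everywhere, one has the Nagumo-type inequality
\begin{equation*}
\frac{|u_n''(t)|}{1+|u_n'(t)|} \leq \psi_\mu(t):=|c|+\bigl(|a^+(t)|+\mu|a^-(t)|\bigr)\max_{0\leq s\leq R}g(s).
\end{equation*}
Because the non-identically-zero hypothesis forces some local maximum $\hat t_n$ of $u_n$ to lie in a fixed compact set $[-n_0T,n_0T+\tau]$, where $u_n'(\hat t_n)=0$, integrating the Nagumo inequality on any interval $[-NT,(N+1)T]$ containing $\hat t_n$ yields $|u_n'(t)|\leq K_N:=\exp\bigl((2N+1)\int_0^T\psi_\mu\bigr)$ for $n\geq N$, exactly as in the proof of Theorem~\ref{th-6.5}. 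I then rewrite \eqref{eq-2.1} as the planar system \eqref{planar-system} and apply Lemma~\ref{lem-6.1} with $B_N=\{(u,y):0\leq u\leq R,\;|y|\leq K_N\}$ and $t_N=NT$, extracting a subsequence $(\tilde u_n)$ converging uniformly on compacts to a solution $\tilde u(t)$ of \eqref{eq-2.1} defined on all of $\mathbb{R}$ with $0\leq \tilde u(t)\leq R$.

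The last step is to transfer the three bullet properties from $u_n$ to $\tilde u$. For a fixed $i\in\{1,\ldots,m\}$ and $\ell\in\mathbb{Z}$, all sufficiently large $n$ have the corresponding symbol equal to $s_\ell^i$, so passing to the uniform limit on $I^+_i+\ell T$ gives a weak version of the bullets: $0\leq \tilde u\leq r$ where $s_\ell^i=0$, and $\max \tilde u\geq r$ on $I^+_i+\ell T$ where $s_\ell^i=1$, and analogously $\tilde u\leq r$ on every $I^-_i+\ell T$. The hard part, and the main obstacle, is upgrading these weak inequalities to the strict ones and ruling out the critical value~$r$: for this I would invoke Remark~\ref{rem-4.8} to get $\tilde u(t)<R^*\leq R$ globally, Remark~\ref{rem-4.10} to conclude (after possibly enlarging $\mu^{**}$) that $\max_{I^+_i+\ell T}\tilde u\neq r$ for any $i,\ell$, hence the weak inequality $\leq r$ must in fact be strict $<r$, and correspondingly the $s_\ell^i=1$ bullet becomes $>r$; Remark~\ref{rem-4.12} (again with $\mu$ large, which is already arranged) yields strict inequality $\tilde u<r$ on the intervals $I^-_i+\ell T$. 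Finally, since $\mathcal{L}$ is not identically zero, $\tilde u\not\equiv 0$, and the maximum principle argument already used in Section~\ref{section-3} upgrades $\tilde u\geq 0$ to $\tilde u>0$ everywhere, completing the proof.
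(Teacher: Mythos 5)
Your proposal is correct and follows exactly the route the paper intends: the authors omit the proof of Theorem~\ref{th-6.6}, stating only that it is obtained by combining the proofs of Theorem~\ref{th-6.4} and Theorem~\ref{th-6.5}, and your argument is precisely that combination (periodic approximants from the symbolic, $m$-hump version of Theorem~\ref{th-6.4}, Nagumo bounds and Lemma~\ref{lem-6.1} for the diagonal limit, and Remarks~\ref{rem-4.8}, \ref{rem-4.10}, \ref{rem-4.12} plus the maximum principle to sharpen the limiting inequalities). The only cosmetic slip is the compact set $[-n_0T,\,n_0T+\tau]$ for the local maximum, which in the $m$-hump setting should be $[-n_0T,\,(n_0+1)T]$; this does not affect the argument.
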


\section{The Neumann boundary value problem}\label{section-7}

In this section we briefly describe how to obtain the results of Section~\ref{section-4} and Section~\ref{section-5}
for the Neumann boundary value problem.
For the sake of simplicity, we deal with the case $c=0$.
If $c\neq0$, we can produce analogous results writing equation \eqref{eq-2.1} as
\begin{equation*}
\bigl{(}u' e^{ct} \bigr{)}' + \tilde{a}_{\mu}(t)g(u) = 0, \quad \text{ with }\; \tilde{a}_{\mu}(t):= a_{\mu}(t)e^{ct},
\end{equation*}
and entering in the setting of coincidence degree theory for the linear operator $L \colon u\mapsto - (u' e^{ct})'$.
For the abstract framework, we refer to \cite{FeZa-2015ade}, where the existence of positive
solutions is analyzed.
Accordingly, we consider the BVP
\begin{equation}\label{NeumannBVP}
\begin{cases}
\, u'' + a_{\mu}(t) g(u) = 0 \\
\, u'(0)=u'(T)=0,
\end{cases}
\end{equation}
where $a \colon \mathopen{[}0,T\mathclose{]}\to \mathbb{R}$ is an integrable function satisfying condition $(a_{*})$
and $g(s)$ fulfils the same conditions as in the previous sections.
In particular, when we assume $(a_{*})$ we suppose that there exist $m\geq 2$ subintervals of
$\mathopen{[}0,T\mathclose{]}$ where the weight is non-negative
separated by $m-1$ subintervals where the weight is non-positive, namely there are $2m+2$ points
\begin{equation*}
0 = \tau_{0}\leq  \sigma_{1} < \tau_{1} < \ldots < \sigma_{i} < \tau_{i} < \ldots < \sigma_{m} < \tau_{m} \leq \sigma_{m+1}=T
\end{equation*}
such that $a(t)\succ 0$ on $\mathopen{[}\sigma_{i},\tau_{i}\mathclose{]}$ and
$a(t)\prec 0$ on $\mathopen{[}\tau_{i},\sigma_{i+1}\mathclose{]}$.

In this case, the abstract setting of Section~\ref{section-3} can be reproduced almost verbatim with $X:= \mathcal{C}(\mathopen{[}0,T\mathclose{]})$,
$Z:=L^{1}(\mathopen{[}0,T\mathclose{]})$ and $L \colon u\mapsto - u''$, by taking in $\text{\rm dom}\,L$
the functions of $X$ which are continuously differentiable with absolutely continuous derivative
and such that $u'(0) = u'(T) = 0$.
With the above positions $\ker L \cong {\mathbb{R}}$, $\text{\rm Im}\,L$, as well as the projectors $P$
and $Q$ are exactly the same as in Section~\ref{section-3}.
Then Theorem~\ref{MainTheorem} can be restated as follows.

\begin{theorem}\label{MainTheorem-Neumann}
Let $g \colon {\mathbb{R}}^{+} \to {\mathbb{R}}^{+}$ be a continuous function satisfying $(g_{*})$,
\begin{equation*}
g_{0} = 0 \quad \text{ and } \quad g_{\infty} > \max_{i=1,\ldots,m} \lambda_{1}^{i}.
\end{equation*}
Let $a \colon  \mathopen{[}0,T\mathclose{]} \to \mathbb{R}$ be an integrable function satisfying $(a_{*})$.
Then there exists $\mu^{*}>0$ such that for all $\mu>\mu^{*}$ problem \eqref{NeumannBVP} has at least $2^{m}-1$ positive solutions.
\end{theorem}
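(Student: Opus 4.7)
\medskip

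\textbf{Proof proposal.}
The plan is to replay the proof of Theorem~\ref{MainTheorem} verbatim, replacing the periodic differential operator by the Neumann one. Specifically, I would set $X:=\mathcal{C}(\mathopen{[}0,T\mathclose{]})$, $Z:=L^{1}(\mathopen{[}0,T\mathclose{]})$ and take $L \colon \text{\rm dom}\,L \to Z$, $Lu:=-u''$, where $\text{\rm dom}\,L$ consists of the $\mathcal{C}^{1}$ functions with absolutely continuous derivative satisfying $u'(0)=u'(T)=0$. As observed in the paper, $L$ is still Fredholm of index zero with $\ker L$ and $\text{\rm coker}\,L$ equal to the constants and $\text{\rm Im}\,L=\{w\in Z\colon \int_{0}^{T}w\,dt=0\}$, and the same projectors $Pu=Qu=\frac{1}{T}\int_{0}^{T}u\,dt$ can be used, with $K_{P}$ being the zero-mean Neumann solver. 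The Nemytskii operator $N$ is unchanged and, since the abstract results in Appendix~\ref{appendix-A} only use that $L$ is a Fredholm operator of index zero, the analogues of Lemma~\ref{lem-deg1-ball}, Lemma~\ref{lem-deg0-ball} and Lemma~\ref{lem-deg0} hold \emph{mutatis mutandis} for the Neumann problem \eqref{NeumannBVP}.

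With this apparatus in place, I would follow the three-step scheme of Section~\ref{section-4.1}: first, using only $(g_{*})$ and $g_{0}=0$, select a small $r>0$ satisfying the analogue of \eqref{cond-etar} and a $\mu_{r}>\mu^{\#}$ such that $(H_{r})$ of Lemma~\ref{lem-deg1-ball} holds for all $\mu\geq\mu_{r}$, giving $D_{L}(L-N,B(0,r))=1$; second, by a Sturm comparison on each $I^{+}_{i}$ driven by $g_{\infty}>\max_{i}\lambda_{1}^{i}$, produce an a priori bound $R^{*}$ depending only on $a^{+}(t)$ such that every non-negative solution of the perturbed Neumann problem \eqref{BVP-3.5} (with its obvious Neumann analogue) is smaller than $R^{*}$ on $\bigcup_{i}I^{+}_{i}$, yielding $D_{L}(L-N,B(0,R))=0$ for $R\geq R^{*}$; finally, fix $r<R^{*}\leq R$ and, for each nonempty $\mathcal{I}\subsetneq\{1,\ldots,m\}$, take $v(t)$ supported on $\bigcup_{i\in\mathcal{I}}I^{+}_{i}$ and verify $(A_{r,\mathcal{J}})$--$(C_{r,\mathcal{J}})$ for $\mathcal{J}:=\{1,\ldots,m\}\setminus\mathcal{I}$ and $\mu>\mu^{*}_{\mathcal{I}}$. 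Combining \eqref{deg-emptyset} (for $\mathcal{I}=\emptyset$) with the Neumann analogues of \eqref{deg-Ja} and \eqref{eq-degR} (for $\mathcal{I}=\{1,\ldots,m\}$), the inductive combinatorial lemma of Appendix~\ref{appendix-B} then yields $D_{L}(L-N,\Lambda^{\mathcal{I}})=(-1)^{\#\mathcal{I}}$ for every $\mathcal{I}\subseteq\{1,\ldots,m\}$, producing one solution $u_{\mathcal{I}}\in\Lambda^{\mathcal{I}}$ for each nonempty $\mathcal{I}$, hence the $2^{m}-1$ positive solutions.

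The role played in the periodic case by Remark~\ref{rem-4.4} (the ``convexity'' of $u$ on each $I^{-}_{i}$ via $(e^{ct}u')'\geq 0$) is now taken over by the following easy observation: with $c=0$, on every interval where $a_{\mu}\leq 0$ one has $u''=-a_{\mu}g(u)\geq 0$, so $u$ is genuinely convex and attains its maximum at the endpoints. For the interior negativity intervals $I^{-}_{i}$, $i=1,\ldots,m-1$, this gives $\max_{I^{-}_{i}}u=\max\{u(\tau_{i}),u(\sigma_{i+1})\}$, exactly as in \eqref{eq-max}. For the two boundary intervals $\mathopen{[}0,\sigma_{1}\mathclose{]}$ and $\mathopen{[}\tau_{m},T\mathclose{]}$ (when nondegenerate, where $a\prec 0$), the Neumann condition $u'(0)=u'(T)=0$ combined with convexity forces $u'\geq 0$ on $\mathopen{[}0,\sigma_{1}\mathclose{]}$ and $u'\leq 0$ on $\mathopen{[}\tau_{m},T\mathclose{]}$, so their maxima are at $\sigma_{1}$ and $\tau_{m}$ respectively. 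This replaces the periodicity-based bookkeeping and shows that $r$ and $R^{*}$ control the solutions on all of $\mathopen{[}0,T\mathclose{]}$, not merely on the positivity intervals.

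The main technical obstacle I anticipate is the verification of $(A_{r,\mathcal{J}})$ for the boundary indices $j=1$ and $j=m$: in the periodic case one exploits both adjacent negativity intervals $I^{-}_{j-1}$ and $I^{-}_{j}$, but now for $j=1$ the left-adjacent negativity interval is $\mathopen{[}0,\sigma_{1}\mathclose{]}$, which may collapse if $\sigma_{1}=0$, and symmetrically for $j=m$. The remedy is a short case analysis: if the maximum $\hat{t}_{j}$ of $u$ on $I^{+}_{j}$ is interior or at the ``inside'' endpoint, the right- (resp.\ left-) adjacent negativity interval $I^{-}_{j}$ (resp.\ $I^{-}_{j-1}$) is genuine and the estimates of Section~\ref{section-4.4} run exactly as before, producing the contradiction with $u<R^{*}$ for $\mu>\mu^{\pm}_{j}$; if instead $\hat{t}_{1}=\sigma_{1}=0$ or $\hat{t}_{m}=\tau_{m}=T$, the Neumann condition forces $u'(\hat{t}_{j})=0$ directly, so the integral representation on $I^{+}_{j}$ used to bound $|u'|$ by $K_{j}\eta(r)r$ is still available, and one then propagates into the single adjacent negativity interval (always nondegenerate by $(a_{*})$ since $m\geq 2$) to conclude. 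Once this is handled, the a posteriori bound on the $I^{-}_{i}$'s of Section~\ref{section-4.5} and all the corollaries of Section~\ref{section-5} transfer unchanged.
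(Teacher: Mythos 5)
Your proposal is correct and follows essentially the same route as the paper, which itself proves Theorem~\ref{MainTheorem-Neumann} by reproducing the abstract setting of Section~\ref{section-3} and the three-step degree computation of Section~\ref{section-4} verbatim for the Neumann operator $L\colon u\mapsto -u''$ with $u'(0)=u'(T)=0$. Your extra care with the boundary negativity intervals $\mathopen{[}0,\sigma_{1}\mathclose{]}$ and $\mathopen{[}\tau_{m},T\mathclose{]}$ and with the cases $\hat{t}_{1}=\sigma_{1}=0$, $\hat{t}_{m}=\tau_{m}=T$ is exactly the bookkeeping the paper leaves implicit (the paper only adds the optional refinement of replacing $\lambda_{1}^{1}$, $\lambda_{1}^{m}$ by mixed Neumann--Dirichlet eigenvalues when a positivity interval touches the boundary, which weakens the hypothesis but is not needed for the statement as given).
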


As in Theorem~\ref{MainTheorem}, the $2^{m}-1$ positive solutions are discriminated by the fact that $\max_{t\in I^{+}_{i}}u(t) < r$
or $r < \max_{t\in I^{+}_{i}}u(t) < R$, where $I^{+}_{i} = \mathopen{[}\sigma_{i},\tau_{i}\mathclose{]}$ is the $i$-th interval where the weight is non-negative (cf.~Remark~\ref{rem-4.1}).
The constants $\lambda_{1}^{i}$ (for $i=1,\ldots,m$) are the first eigenvalues of the eigenvalue problems in $I^{+}_{i}$
\begin{equation*}
\varphi'' + \lambda a(t) \varphi = 0, \quad \varphi|_{\partial I^{+}_{i}} = 0.
\end{equation*}
If $\sigma_{1} = \tau_{0} = 0$ (that is $a(t)$ starts with a first interval of non-negativity), we can take $\lambda^{1}_{1}$
as the first eigenvalue of the eigenvalue problem
\begin{equation*}
\varphi'' + \lambda a(t) \varphi = 0, \quad \varphi'(0) = \varphi(\tau_{1}) = 0,
\end{equation*}
while if
$\tau_{m} = \sigma_{m+1}=T$ (that is $a(t)$ ends with a last interval of non-negativity), we can take $\lambda^{m}_{1}$
as the first eigenvalue of the eigenvalue problem
\begin{equation*}
\varphi'' + \lambda a(t) \varphi = 0, \quad \varphi(\sigma_{m}) = \varphi'(T) = 0.
\end{equation*}

Clearly, for the Neumann problem \eqref{NeumannBVP} we can also reestablish
the corollaries in Section~\ref{section-5}. In particular, Corollary~\ref{cor-5.2} reads as follows.

\begin{corollary}\label{cor-7.1}
Let $g \colon {\mathbb{R}}^{+} \to {\mathbb{R}}^{+}$ be a continuous function satisfying $(g_{*})$,
\begin{equation*}
g_{0} = 0 \quad \text{ and } \quad g_{\infty} = +\infty.
\end{equation*}
Let $a \colon  \mathopen{[}0,T\mathclose{]} \to \mathbb{R}$ be an  integrable function satisfying $(a_{*})$.
Then there exists $\mu^{*}>0$ such that for all $\mu>\mu^{*}$ problem \eqref{NeumannBVP} has at least $2^{m}-1$ positive solutions.
\end{corollary}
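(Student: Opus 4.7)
The plan is to deduce Corollary~\ref{cor-7.1} directly from Theorem~\ref{MainTheorem-Neumann}, exactly as Corollary~\ref{cor-5.2} was obtained from Theorem~\ref{MainTheorem} in the periodic setting. The only apparent discrepancy between the hypotheses of the two statements is that Theorem~\ref{MainTheorem-Neumann} requires $g_{\infty} > \max_{i=1,\ldots,m} \lambda_{1}^{i}$, while here we are assuming the stronger condition $g_{\infty} = +\infty$. Since each $\lambda_{1}^{i}$ is a fixed finite positive number (independent of $g$), the assumption $g_{\infty}=+\infty$ clearly implies $g_{\infty} > \max_{i} \lambda_{1}^{i}$, so Theorem~\ref{MainTheorem-Neumann} applies verbatim and yields the desired $2^{m}-1$ positive solutions.

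To justify that each $\lambda_{1}^{i}$ is finite and positive, I would invoke standard Sturm--Liouville theory: on the closed interval $I^{+}_{i}$, with a non-negative, non-identically-zero integrable weight $a(t)$, the weighted eigenvalue problem $\varphi'' + \lambda a(t)\varphi = 0$ with the specified (Dirichlet or mixed Dirichlet--Neumann) boundary condition admits a smallest eigenvalue $\lambda_{1}^{i}\in\mathopen{]}0,+\infty\mathclose{[}$, characterized variationally, for instance as
\begin{equation*}
\lambda_{1}^{i} = \inf \biggl\{ \int_{I^{+}_{i}} (\varphi')^{2}~\!dt \,\Big/\, \int_{I^{+}_{i}} a(t)\varphi^{2}~\!dt \colon \varphi\in H^{1}(I^{+}_{i}), \; \varphi\not\equiv 0, \; \varphi \text{ satisfies the b.c.} \biggr\}.
\end{equation*}
In particular, for any $\varepsilon>0$ one can find $\delta_{\varepsilon}>0$ such that $g(s) > (\lambda_{1}^{i}+\varepsilon)s$ for every $s\geq\delta_{\varepsilon}$, which is exactly the information used in Section~\ref{section-4.3} to deduce the a priori upper bound $R^{*}$ in the Neumann analog of the proof.

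In practice, therefore, the only nontrivial task is to check that Theorem~\ref{MainTheorem-Neumann} itself holds, since Corollary~\ref{cor-7.1} is immediate once that is in hand. The proof of Theorem~\ref{MainTheorem-Neumann} follows the same coincidence degree scheme as Theorem~\ref{MainTheorem}: the linear operator $L\colon u\mapsto -u''$ on the domain of $C^{1}$ functions with absolutely continuous derivative satisfying $u'(0)=u'(T)=0$ is Fredholm of index zero with $\ker L$ and $\operatorname{coker}L$ identified with the constants, the projectors are again the averaging operators, and all three technical lemmas (Lemma~\ref{lem-deg1-ball}, Lemma~\ref{lem-deg0-ball}, Lemma~\ref{lem-deg0}) translate without change. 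The one-dimensional maximum-principle argument of Remark~\ref{rem-4.4}, which drives the convexity-type estimate \eqref{eq-max} on each $I^{-}_{i}$, works identically in the Neumann case (noting that $u'(0)=u'(T)=0$ plays the role of $T$-periodicity for the extremal intervals $I^{-}_{0}$ and $I^{-}_{m}$ when $\tau_{0}=0$ or $\sigma_{m+1}=T$).

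The main obstacle, and the only place where care is needed in adapting the periodic proof, is the handling of boundary intervals of positivity: if $\sigma_{1}=0$ or $\tau_{m}=T$, the arguments in Section~\ref{section-4.3} and Section~\ref{section-4.4} (verification of $(H^{R})$ and of $(A_{\mathcal{J},r})$) must be rephrased in terms of the mixed Dirichlet--Neumann eigenvalue problems introduced after the statement of Theorem~\ref{MainTheorem-Neumann}, rather than the pure Dirichlet problem on an interior interval. This simply amounts to using the boundary condition $u'(0)=0$ (or $u'(T)=0$) in place of the monotonicity/convexity estimate coming from the neighboring non-positivity interval. Once this cosmetic modification is in place, the entire inductive argument based on \eqref{eq-deg-Lambda} goes through and produces the $2^{m}-1$ positive solutions, one in each $\Lambda^{\mathcal{I}}_{r,R}$ for $\emptyset\neq\mathcal{I}\subseteq\{1,\ldots,m\}$, completing the proof of Corollary~\ref{cor-7.1}.
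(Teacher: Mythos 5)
Your proposal is correct and matches the paper's own route: Corollary~\ref{cor-7.1} is obtained there exactly as Corollary~\ref{cor-5.2} is obtained from Theorem~\ref{MainTheorem}, namely by observing that $g_{\infty}=+\infty$ trivially implies $g_{\infty}>\max_{i}\lambda_{1}^{i}$ and then invoking Theorem~\ref{MainTheorem-Neumann}. Your additional remarks on the finiteness of the $\lambda_{1}^{i}$ and on the boundary intervals (mixed Dirichlet--Neumann eigenvalue problems when $\sigma_{1}=0$ or $\tau_{m}=T$) are consistent with the paper's own discussion in Section~\ref{section-7}.
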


In the sequel we are going to use also a variant for the Neumann problem of Corollary~\ref{cor-5.5} that we do not state here explicitly.

\subsection{Radially symmetric solutions}\label{section-7.1}

We show now a consequence of the above results to the study of a PDE in an annular domain.
In order to simplify the exposition, we assume the continuity of the weight function.
In this manner, the solutions we find are the ``classical'' ones (at least two times continuously differentiable).
The first part of the following presentation
is essentially borrowed from \cite[\S~4.1]{FeZa-2015ade}; however, the multiplicity result is a new
contribution.

Let $\|\cdot\|$ be the Euclidean norm in ${\mathbb{R}}^{N}$ (for $N \geq 2$) and let
\begin{equation*}
\Omega:= B(0,R_{2})\setminus B[0,R_{1}] = \bigl{\{}x\in {\mathbb{R}}^{N} \colon R_{1} < \|x\| < R_{2}\bigr{\}}
\end{equation*}
be an open annular domain, with $0 < R_{1} < R_{2}$.

We deal with the Neumann boundary value problem
\begin{equation}\label{eq-pde-rad}
\begin{cases}
\, -\Delta \,u = q_{\mu}(x)\,g(u) & \text{ in } \Omega \\ \vspace*{2pt}
\, \dfrac{\partial u}{\partial {\bf n}} = 0 & \text{ on } \partial\Omega,
\end{cases}
\end{equation}
where $q \colon \overline{\Omega}\to {\mathbb{R}}$ is a continuous function which is radially symmetric, namely
there exists a continuous scalar function ${\mathcal{Q}} \colon \mathopen{[}R_{1},R_{2}\mathclose{]}\to {\mathbb{R}}$
such that
\begin{equation*}
q(x) = {\mathcal{Q}}(\|x\|), \quad \forall \, x\in \overline{\Omega},
\end{equation*}
and
\begin{equation*}
q_{\mu}(x) := q^{+}(x)- \mu q^{-}(x), \quad {\mathcal{Q}}_{\mu}(r) := {\mathcal{Q}}^{+}(r) - \mu {\mathcal{Q}}^{-}(r).
\end{equation*}
We look for existence/nonexistence and multiplicity of radially symmetric positive solutions of \eqref{eq-pde-rad},
that are classical solutions such that $u(x) > 0$ for all $x\in \Omega$ and also $u(x) = {\mathcal{U}}(\|x\|)$,
where ${\mathcal{U}}$ is a scalar function defined on $\mathopen{[}R_{1},R_{2}\mathclose{]}$.

Accordingly, our study can be reduced to the search of positive solutions
of the Neumann boundary value problem
\begin{equation}\label{eq-rad}
{\mathcal{U}}''(r) + \dfrac{N-1}{r} \, {\mathcal{U}}'(r) + {\mathcal{Q}}_{\mu}(r) g({\mathcal{U}}(r)) = 0,
\quad {\mathcal{U}}'(R_{1}) = {\mathcal{U}}'(R_{2}) = 0.
\end{equation}
Using the standard change of variable
\begin{equation*}
t = h(r):= \int_{R_{1}}^{r} \xi^{1-N} ~\!d\xi
\end{equation*}
and defining
\begin{equation*}
T:= \int_{R_{1}}^{R_{2}} \xi^{1-N} ~\!d\xi, \quad r(t):= h^{-1}(t) \quad \text{and} \quad v(t)={\mathcal{U}}(r(t)),
\end{equation*}
we transform \eqref{eq-rad} into the equivalent problem
\begin{equation}\label{eq-rad1}
v'' + a_{\mu}(t) g(v) = 0, \quad v'(0) = v'(T) = 0,
\end{equation}
with
\begin{equation*}
a(t):= r(t)^{2(N-1)}{\mathcal{Q}}(r(t)).
\end{equation*}
Consequently, the Neumann boundary value problem \eqref{eq-rad1} is of the same form of \eqref{NeumannBVP}
and we can apply the previous results.

Accordingly, suppose also that
\begin{itemize}
\item [$(q_{*})$]
\textit{
there exist $2m+2$ points $R_{1} = \tau_{0} \leq \sigma_{1} < \tau_{1} < \ldots < \sigma_{m} < \tau_{m} \leq \sigma_{m+1} = R_{2}$ such that
\begin{equation*}
\begin{aligned}
& {\mathcal{Q}}(r) > 0 \; \text{ on } \; \mathopen{]}\sigma_{i},\tau_{i}\mathclose{[}, \; i=1,\ldots, m;\\
& {\mathcal{Q}}(r) < 0 \; \text{ on } \; \mathopen{]}\tau_{i},\sigma_{i+1}\mathclose{[}, \; i=0,\ldots, m.
\end{aligned}
\end{equation*}
}
\end{itemize}

Notice that condition
\begin{equation}\label{eq-7.5}
\int_{0}^{T} a_{\mu}(t)~\!dt < 0
\end{equation}
reads as
\begin{equation*}
0 > \int_{0}^{T}r(t)^{2(N-1)}{\mathcal{Q}}_{\mu}(r(t))~\!dt = \int_{R_{1}}^{R_{2}}r^{N-1}{\mathcal{Q}}_{\mu}(r)~\!dr.
\end{equation*}
Up to a multiplicative constant, the latter integral is the integral of $q(x)$ on $\Omega$,
using the change of variable formula for radially symmetric functions. Thus, $\mu>0$ satisfies \eqref{eq-7.5} if and only if $\mu$ satisfies
\begin{equation*}
\int_{\Omega}^{} q_{\mu}(x)~\!dx < 0.
\leqno{(q_{**})}
\end{equation*}
Similarly, the integral in \eqref{eq-7.5} is sufficiently negative (depending on $\mu$) if and only if the integral in $(q_{**})$
is negative enough (depending on $\mu$). With these premises, Corollary~\ref{cor-7.1} yields to the following result.

\begin{theorem}\label{th-7.1}
Let $g \colon {\mathbb{R}}^{+} \to {\mathbb{R}}^{+}$ be a continuous function satisfying $(g_{*})$,
\begin{equation*}
g_{0} = 0 \quad \text{ and } \quad g_{\infty} = +\infty.
\end{equation*}
Let $q(x)$ be a continuous (radial) weight function as above.
Then there exists $\mu^{*}>0$ such that for each $\mu > \mu^{*}$ problem \eqref{eq-pde-rad}
has at least $2^{m}-1$ positive radially symmetric solutions.
\end{theorem}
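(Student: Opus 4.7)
The plan is to invoke the reduction to a one-dimensional Neumann problem that is already laid out in the excerpt, and then to apply Corollary~\ref{cor-7.1} to harvest the multiplicity. Concretely, a radially symmetric solution $u(x) = \mathcal{U}(\|x\|)$ of \eqref{eq-pde-rad} corresponds to a solution of the scalar Neumann problem \eqref{eq-rad}; then the change of variable $t = h(r) = \int_{R_1}^{r} \xi^{1-N}\,d\xi$ together with $v(t) := \mathcal{U}(r(t))$ converts \eqref{eq-rad} into the equivalent problem \eqref{eq-rad1} on $\mathopen{[}0,T\mathclose{]}$, with transformed weight $a(t) := r(t)^{2(N-1)} \mathcal{Q}(r(t))$. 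So my task reduces to checking that the transformed weight fits the hypotheses of Corollary~\ref{cor-7.1}.

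Next I would verify that $a(t)$ satisfies condition $(a_{*})$ (in the Neumann version, allowing negativity intervals at the endpoints). Since $h$ is a strictly increasing $C^{1}$-diffeomorphism from $\mathopen{[}R_{1},R_{2}\mathclose{]}$ onto $\mathopen{[}0,T\mathclose{]}$, the $2m+2$ points given by $(q_{*})$ map to a strictly increasing sequence in $\mathopen{[}0,T\mathclose{]}$; call them $\tilde{\tau}_{0} \leq \tilde{\sigma}_{1} < \tilde{\tau}_{1} < \ldots < \tilde{\sigma}_{m} < \tilde{\tau}_{m} \leq \tilde{\sigma}_{m+1}$, with $\tilde{\sigma}_{i} := h(\sigma_{i})$ and $\tilde{\tau}_{i} := h(\tau_{i})$. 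Because $r(t)^{2(N-1)} > 0$ for every $t$, the sign of $a(t)$ matches that of $\mathcal{Q}(r(t))$, and continuity of $\mathcal{Q}$ together with $(q_{*})$ ensures that $a(t) \succ 0$ on each $\mathopen{[}\tilde{\sigma}_{i},\tilde{\tau}_{i}\mathclose{]}$ and $a(t) \prec 0$ on each $\mathopen{[}\tilde{\tau}_{i},\tilde{\sigma}_{i+1}\mathclose{]}$. Hence $a(t)$ displays the required $m$ positive humps separated by negative ones in the Neumann setting.

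Finally, I would apply Corollary~\ref{cor-7.1} to \eqref{eq-rad1}. The nonlinearity $g$ is unchanged by the transformation, so the assumptions $(g_{*})$, $g_{0} = 0$ and $g_{\infty} = +\infty$ are inherited without modification. The corollary therefore yields a threshold $\mu^{*} > 0$ such that, for every $\mu > \mu^{*}$, the one-dimensional problem \eqref{eq-rad1} has at least $2^{m}-1$ positive solutions $v_{\mathcal{I}}(t)$, indexed by the nonempty subsets $\mathcal{I} \subseteq \{1,\ldots,m\}$, pairwise distinct because they sit in the pairwise disjoint sets $\Lambda^{\mathcal{I}}_{r,R}$. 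Pulling each one back via $\mathcal{U}_{\mathcal{I}}(r) := v_{\mathcal{I}}(h(r))$ and $u_{\mathcal{I}}(x) := \mathcal{U}_{\mathcal{I}}(\|x\|)$ produces $2^{m}-1$ distinct positive radially symmetric classical solutions of \eqref{eq-pde-rad}, which is the claim. There is no serious obstacle beyond the verification of $(a_{*})$: the heavy lifting has already been carried out in the coincidence degree machinery leading to Theorem~\ref{MainTheorem-Neumann} and Corollary~\ref{cor-7.1}.
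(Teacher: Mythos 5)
Your proposal is correct and follows exactly the route the paper takes: Section~\ref{section-7.1} reduces \eqref{eq-pde-rad} to the scalar Neumann problem \eqref{eq-rad1} via the change of variable $t = h(r)$, observes that the transformed weight $a(t) = r(t)^{2(N-1)}\mathcal{Q}(r(t))$ inherits the sign pattern $(a_{*})$ from $(q_{*})$, and then invokes Corollary~\ref{cor-7.1}. Your additional remarks on the monotonicity of $h$ and the distinctness of the pulled-back solutions are accurate and only make explicit what the paper leaves implicit.
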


Corollary~\ref{cor-7.1} and Theorem~\ref{th-7.1} represent an extension of \cite{Bo-2011}, where the same result was obtained
(with a shooting type approach) for $m=2$. Another extension of \cite{Bo-2011}, for an arbitrary $m\geq 2$, has been recently
achieved in \cite{BaBoVe-jde2015} (using a variational approach) for a power type nonlinearity $g(s)$.

Adding the smoothness of $g(s)$, from Corollary~\ref{cor-5.5} we obtain the next result.

\begin{theorem}\label{th-7.2}
Let $g \colon {\mathbb{R}}^{+} \to {\mathbb{R}}^{+}$ be a continuously differentiable function satisfying $(g_{*})$,
\begin{equation*}
g_{0} = 0 \quad \text{ and } \quad g_{\infty} = +\infty.
\end{equation*}
Let $q(x)$ be a continuous (radial) weight function as above.
Then, for all $\mu>0$ such that $(q_{**})$ holds, there exist two constants $0 < \omega_{*} \leq \omega^{*}$
(depending on $\mu$) such that the Neumann boundary value problem
\begin{equation}\label{eq-pde-rad-2}
\begin{cases}
\, -\Delta \,u = \nu \, q_{\mu}(x)\,g(u) & \text{ in } \Omega \\ \vspace*{2pt}
\, \dfrac{\partial u}{\partial {\bf n}} = 0 & \text{ on } \partial\Omega
\end{cases}
\end{equation}
has no positive solutions for $0 < \nu < \omega_{*}$ and at least one positive solution for $\nu > \omega^{*}$.
Moreover there exists $\nu^{*} > 0$ such that for all $\nu >\nu^{*}$ there exists $\mu^{*}=\mu^{*}(\nu)$
such that for $\mu>\mu^{*}$ problem \eqref{eq-pde-rad-2} has at least $2^{m}-1$ positive solutions.
\end{theorem}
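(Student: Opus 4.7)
The plan is to reduce the PDE problem \eqref{eq-pde-rad-2} to a one-dimensional Neumann problem by the standard radial change of variables, and then invoke the Neumann analogue of Corollary~\ref{cor-5.5} (whose statement is promised just before the theorem). First I would set $t = h(r) = \int_{R_1}^{r} \xi^{1-N}\,d\xi$, $T = h(R_2)$, $r(t) = h^{-1}(t)$, and $v(t) = \mathcal{U}(r(t))$, where $\mathcal{U}$ is the radial profile of $u$. As already verified in the excerpt (see the derivation leading to \eqref{eq-rad1}), under this change of variable looking for radially symmetric positive classical solutions of \eqref{eq-pde-rad-2} is equivalent to solving the Neumann BVP
\begin{equation*}
v'' + \nu\, a_{\mu}(t)\, g(v) = 0, \qquad v'(0) = v'(T) = 0,
\end{equation*}
with $a(t) := r(t)^{2(N-1)}\mathcal{Q}(r(t))$. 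Since $r(t)^{2(N-1)} > 0$, hypothesis $(q_*)$ on the sign-structure of $\mathcal{Q}$ transfers verbatim into hypothesis $(a_*)$ on $a(t)$ with the same integer $m$.

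Next I would check the compatibility of the mean-value condition. By the change of variables formula for radially symmetric functions,
\begin{equation*}
\int_{0}^{T} a_{\mu}(t)\,dt
= \int_{R_1}^{R_2} r^{N-1}\, \mathcal{Q}_{\mu}(r)\, dr
= \frac{1}{\omega_{N-1}} \int_{\Omega} q_{\mu}(x)\,dx,
\end{equation*}
where $\omega_{N-1}$ is the surface measure of the unit sphere. Hence $(q_{**})$ is exactly the condition $\int_0^T a_\mu(t)\,dt < 0$ needed to apply the Neumann analogue of Corollary~\ref{cor-5.5} to the parameter-dependent equation $v'' + \nu a_\mu(t) g(v) = 0$.

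With this setup, the Neumann variant of Corollary~\ref{cor-5.5} (which follows, as Corollary~\ref{cor-5.5} itself does, by combining Theorem~\ref{MainTheorem-Neumann} with a lemma of the type \cite[Lemma~4.1]{BoFeZa-pp2015} giving the nonexistence bound $\omega_*$ and an existence bound $\omega^*$ for at least one solution) yields, for $\mu$ fixed so that $(q_{**})$ holds: two positive constants $0 < \omega_{*} \leq \omega^{*}$ such that the reduced problem has no positive solution for $0 < \nu < \omega_*$ and at least one positive solution for $\nu > \omega^{*}$. The multiplicity statement requires a further application of Corollary~\ref{cor-7.1} with the rescaled nonlinearity: writing $\nu a_\mu(t) g(v) = a_\mu(t) \tilde g(v)$ with $\tilde g = \nu g$, or more directly recasting the result in the form of Corollary~\ref{cor-5.1}, one gets $\nu^{*}$ such that for $\nu > \nu^{*}$ the quantity $\nu g_\infty$ exceeds $\max_i \lambda_1^i$, and then a further threshold $\mu^{*} = \mu^{*}(\nu)$ so that $2^{m}-1$ positive solutions of the reduced Neumann problem exist. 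Pulling everything back through $r = r(t)$ gives the corresponding $2^m-1$ radially symmetric positive solutions of \eqref{eq-pde-rad-2}.

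The only nontrivial point, and the one I would handle with care, is the nonexistence part for $0 < \nu < \omega_*$: here the smoothness assumption $g \in \mathcal{C}^1$ with $(g_*)$ and $g_0 = 0$ is used to get, via a maximum-principle/eigenvalue argument in the spirit of \cite[Lemma~4.1]{BoFeZa-pp2015}, a lower bound on $\nu$ below which any non-negative nontrivial solution of $v'' + \nu a_\mu(t) g(v) = 0$ is ruled out. All the remaining steps are routine translations: the sign-structure, the mean-value condition, and the threshold constants $\omega_*$, $\omega^*$, $\nu^*$, $\mu^*(\nu)$ transfer from the ODE Neumann setting to the PDE statement via the one-to-one correspondence provided by the radial reduction, completing the proof of Theorem~\ref{th-7.2}.
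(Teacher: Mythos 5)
Your proposal is correct and follows essentially the same route as the paper: the paper derives Theorem~\ref{th-7.2} precisely by the radial change of variables $t=h(r)$ reducing \eqref{eq-pde-rad-2} to the one-dimensional Neumann problem with weight $a(t)=r(t)^{2(N-1)}\mathcal{Q}(r(t))$, noting that $(q_{**})$ is equivalent to $\int_{0}^{T}a_{\mu}(t)\,dt<0$, and then invoking the (unstated) Neumann variant of Corollary~\ref{cor-5.5}, with the $\mathcal{C}^{1}$ hypothesis on $g$ entering only through the nonexistence bound $\omega_{*}$. Your identification of where the smoothness is used and of the rescaling $\nu g_{\infty}>\max_{i}\lambda_{1}^{i}$ for the multiplicity part matches the paper's intended argument.
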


\appendix
\section{Mawhin's coincidence degree}\label{appendix-A}

This appendix is devoted to recalling some basic facts about a version
of Mawhin's coincidence degree for open and possibly unbounded sets that is used in the present paper.
For more details about the coincidence degree, proofs and applications, we refer to \cite{GaMa-1977,Ma-1979,Ma-1993} and the references therein.

Let $X$ and $Z$ be real Banach spaces and let
\begin{equation*}
L \colon \text{\rm dom}\,L (\subseteq X) \to Z
\end{equation*}
be a linear Fredholm mapping of index zero, i.e.~$\text{\rm Im}\,L$ is a closed subspace of $Z$ and
$\text{\rm dim}(\ker L) = \text{\rm codim}(\text{\rm Im}\,L)$ are finite.
We denote by $\ker L = L^{-1}(0)$ the kernel of $L$, by $\text{\rm Im}\,L\subseteq Z$ the image of $L$
and by $\text{\rm coker}\,L \cong Z/\text{\rm Im}\,L$ the complementary subspace of $\text{\rm Im}\,L$ in $Z$.
Consider the linear continuous projections
\begin{equation*}
P \colon X \to \ker L, \qquad Q \colon Z \to \text{\rm coker}\,L.
\end{equation*}
so that
\begin{equation*}
X = \ker L \oplus \ker P, \qquad Z = \text{\rm Im}\,L \oplus \text{\rm Im}\,Q.
\end{equation*}
We denote by
\begin{equation*}
K_{P} \colon \text{\rm Im}\,L \to \text{\rm dom}\,L \cap \ker P
\end{equation*}
the right inverse of $L$, i.e.~$L K_{P}(w) = w$ for each $w\in \text{\rm Im}\,L$.
Since $\ker L$ and $\text{\rm coker}\,L$ are finite dimensional vector spaces of the same dimension,
once an orientation on both spaces is fixed, we choose a linear orientation-preserving isomorphism $J \colon \text{\rm coker}\,L \to \ker L$.

Let
\begin{equation*}
N \colon X \to Z
\end{equation*}
be a nonlinear \textit{$L$-completely continuous} operator, namely $N$ is continuous and
$QN(B)$ and $K_{P}(Id-Q)N(B)$ are relatively compact sets, for each bounded set $B\subseteq X$.
For example, $N$ is $L$-completely continuous when $N$ is continuous, maps bounded sets to bounded sets
and $K_{P}$ is a compact linear operator.
Consider the \textit{coincidence equation}
\begin{equation}\label{eq-A.1}
Lu = Nu,\quad u\in \text{\rm dom}\,L.
\end{equation}
One can easily prove that equation \eqref{eq-A.1} is equivalent to the fixed point problem
\begin{equation}\label{eq-A.2}
u = \Phi(u):= Pu + JQNu + K_{P}(Id-Q)Nu, \quad u\in X.
\end{equation}
Notice that, under the above assumptions, $\Phi \colon X \to X$ is a completely continuous operator.
Therefore, applying Leray-Schauder degree theory to the operator equation \eqref{eq-A.2}, it is possible to solve equation \eqref{eq-A.1} when $L$ is not invertible.

\medskip

Let $\mathcal{O}\subseteq X$ be an open and \textit{bounded} set such that
\begin{equation*}
Lu \neq Nu, \quad \forall \, u\in \text{\rm dom}\,L \cap \partial\mathcal{O}.
\end{equation*}
In this case, the \textit{coincidence degree of $L$ and $N$ in $\mathcal{O}$} is defined as
\begin{equation*}
D_{L}(L-N,\mathcal{O}):= \text{deg}_{LS}(Id-\Phi,\mathcal{O},0),
\end{equation*}
where ``$\text{deg}_{LS}$'' denotes the Leray-Schauder degree. We underline that $D_{L}$ is independent
on the choice of the linear orientation-preserving isomorphism $J$ and of the projectors $P$ and $Q$.

Now we present an extension of the coincidence degree to open (possibly unbounded) sets, following the
standard approach used in the theory of fixed point index
to define the Leray-Schauder degree for locally compact maps on arbitrary open sets (cf.~\cite{Nu-1985,Nu-1993}).
Extensions of coincidence degree to the case of general open sets have been already considered in
previous articles (see for instance \cite{CaHeMaZa-1994, MaReZa-2000, Mo-1996}).

Consider an open set $\Omega\subseteq X$ and suppose that the solution set
\begin{equation*}
\text{\rm Fix}\,(\Phi,\Omega):= \bigl{\{}u\in {\Omega} \colon u =
\Phi u\bigr{\}} = \bigl{\{}u\in {\Omega}\cap \text{\rm dom}\,L \colon Lu = N u\bigr{\}}
\end{equation*}
is compact. According to the extension of Leray-Schauder degree, we can define
\begin{equation*}
\text{deg}_{LS}(Id - \Phi,\Omega,0):=\text{deg}_{LS}(Id - \Phi,\mathcal{V},0),
\end{equation*}
where $\mathcal{V}$ is an open and bounded set with $\text{\rm Fix}\,(\Phi,\Omega) \subseteq \mathcal{V} \subseteq \overline{\mathcal{V}} \subseteq \Omega$.
The definition is independent of the choice of $\mathcal{V}$.
In this case the \textit{coincidence degree} \textit{of $L$ and $N$ in $\Omega$} is defined as
\begin{equation*}
D_{L}(L-N,\Omega):= D_{L}(L-N,{\mathcal{V}}) =\text{deg}_{LS}(Id - \Phi,\mathcal{V},0),
\end{equation*}
with $\mathcal{V}$ as above.
Using the excision property of the Leray-Schauder degree, it is easy to check that
if $\Omega$ is an open and bounded set satisfying $Lu \neq Nu$, for all $u\in \partial\Omega\cap \text{\rm dom}\,L$,
this definition is the usual definition of coincidence degree described above.

\medskip

The main properties of the coincidence degree are the following.
\begin{itemize}
\item \textit{Additivity. }
Let $\Omega_{1}$, $\Omega_{2}$ be open and disjoint subsets of $\Omega$ such that $\text{\rm Fix}\,(\Phi,\Omega)\subseteq \Omega_{1}\cup\Omega_{2}$.
Then
\begin{equation*}
D_{L}(L-N,\Omega) = D_{L}(L-N,\Omega_{1})+ D_{L}(L-N,\Omega_{2}).
\end{equation*}
\item \textit{Excision. }
Let $\Omega_{0}$ be an open subset of $\Omega$ such that $\text{\rm Fix}\,(\Phi,\Omega)\subseteq \Omega_{0}$.
Then
\begin{equation*}
D_{L}(L-N,\Omega)=D_{L}(L-N,\Omega_{0}).
\end{equation*}
\item \textit{Existence theorem. }
If $D_{L}(L-N,\Omega)\neq0$, then $\text{\rm Fix}\,(\Phi,\Omega)\neq\emptyset$,
hence there exists $u\in {\Omega}\cap \text{\rm dom}\,L$ such that $Lu = Nu$.
\item \textit{Homotopic invariance. }
Let $H\colon\mathopen{[}0,1\mathclose{]}\times \Omega \to X$, $H_{\vartheta}(u) := H(\vartheta,u)$, be a continuous homotopy such that
\begin{equation*}
\mathcal{S}:=\bigcup_{\vartheta\in\mathopen{[}0,1\mathclose{]}} \bigl{\{}u\in \Omega\cap \text{\rm dom}\,L \colon Lu=H_{\vartheta}u\bigr{\}}
\end{equation*}
is a compact set and there exists an open neighborhood $\mathcal{W}$ of $\mathcal{S}$ such that $\overline{\mathcal{W}}\subseteq \Omega$ and
$(K_{P}(Id-Q)H)|_{\mathopen{[}0,1\mathclose{]}\times\overline{\mathcal{W}}}$ is a compact map.
Then the map $\vartheta\mapsto D_{L}(L-H_{\vartheta},\Omega)$ is constant on $\mathopen{[}0,1\mathclose{]}$.
\end{itemize}

The following two results are of crucial importance in the computation of the coincidence degree in open and bounded sets.
The proofs are omitted.
Lemma~\ref{lemma_Mawhin} is taken from \cite[Theorem~IV.1]{GaMa-1977} and \cite[Theorem~2.4]{Ma-1993}, while
Lemma~\ref{lem-abs-deg0} is a classical result (see \cite{Nu-1973}) adapted to the present setting
as in \cite[Lemma~2.2, Lemma~2.3]{FeZa-2015ade}. By ``$\text{deg}_{B}$'' we denote the Brouwer degree.

\begin{lemma}\label{lemma_Mawhin}
Let $L$ and $N$ be as above and let $\Omega\subseteq X$ be an open and bounded set.
Suppose that
\begin{equation*}
Lu \neq \vartheta Nu, \quad \forall \, u\in \text{\rm dom}\,L \cap \partial\Omega, \; \forall \, \vartheta\in\mathopen{]}0,1\mathclose{]},
\end{equation*}
and
\begin{equation*}
QN(u)\neq0, \quad \forall\, u\in \partial\Omega \cap \ker L.
\end{equation*}
Then
\begin{equation*}
D_{L}(L-N,\Omega) = \text{deg}_{B}(-JQN|_{\ker L},\Omega \cap \ker L,0).
\end{equation*}
\end{lemma}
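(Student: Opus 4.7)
The plan is to reduce the coincidence degree of $L-N$ on $\Omega$ to the coincidence degree of $L - QN$ via a homotopy that kills the $(Id-Q)N$ part, and then exploit the fact that the associated fixed point operator for $QN$ has its image in $\ker L$ (finite dimensional), so Leray--Schauder degree collapses to a Brouwer degree on $\ker L \cap \Omega$.

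\medskip

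First I would define the admissible homotopy $H \colon \mathopen{[}0,1\mathclose{]} \times X \to Z$ by
\begin{equation*}
H_{\vartheta}(u) := \vartheta Nu + (1-\vartheta)\, QNu,
\end{equation*}
so that $H_{1} = N$ and $H_{0} = QN$. To apply the homotopic invariance property, I must check both hypotheses listed in Appendix~\ref{appendix-A}. For the coincidence condition, suppose $Lu = H_{\vartheta}u$ for some $u \in \text{\rm dom}\,L \cap \partial\Omega$. Applying $Q$ and using $QL=0$ together with $Q^{2}=Q$ gives $0 = \vartheta\, QNu + (1-\vartheta)\,QNu = QNu$, so that $H_{\vartheta}u = \vartheta Nu$ and hence $Lu = \vartheta Nu$. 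For $\vartheta \in \mathopen{]}0,1\mathclose{]}$ this contradicts the first hypothesis; for $\vartheta = 0$ it forces $u \in \ker L$ with $QNu = 0$, contradicting the second hypothesis. For the compactness condition, the identity $(Id-Q)Q = 0$ yields $K_{P}(Id-Q)H_{\vartheta}u = \vartheta\, K_{P}(Id-Q)Nu$, which is jointly continuous in $(\vartheta,u)$ and compact on bounded sets thanks to the $L$-complete continuity of $N$ and the boundedness of $\Omega$; the coincidence set $\mathcal{S}$ is therefore a bounded closed subset of $\Omega$, and the representation $u = Pu + \vartheta K_{P}(Id-Q)Nu$ for $u \in \mathcal{S}$ together with the finite dimensionality of $\ker L$ yields sequential compactness of $\mathcal{S}$.

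\medskip

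Homotopic invariance then gives $D_{L}(L-N,\Omega) = D_{L}(L-QN,\Omega)$. To compute the right-hand side, I pass to the associated fixed point operator $\Phi_{QN}u := Pu + JQ(QN)u + K_{P}(Id-Q)(QN)u = Pu + JQNu$, where the last term vanishes because $(Id-Q)Q = 0$. Since $\Phi_{QN}(X) \subseteq \ker L$, a finite-dimensional subspace, the standard reduction property of the Leray--Schauder degree (applicable because any fixed point of $\Phi_{QN}$ lies in $\ker L$) yields
\begin{equation*}
D_{L}(L-QN,\Omega) = \text{deg}_{LS}(Id-\Phi_{QN},\Omega,0) = \text{deg}_{B}\bigl((Id-\Phi_{QN})|_{\ker L},\Omega\cap\ker L,0\bigr).
\end{equation*}
On $\ker L$ the projection $P$ acts as the identity, so $(Id-\Phi_{QN})|_{\ker L}(u) = u - u - JQNu = -JQN|_{\ker L}(u)$; this completes the chain of equalities. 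The Brouwer degree is well defined because $\partial(\Omega\cap\ker L) \subseteq \partial\Omega \cap \ker L$ and the second hypothesis guarantees $QNu \neq 0$ there.

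\medskip

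The main technical point to be handled with care is the verification of the compactness clauses of the homotopic invariance property, since the statement in Appendix~\ref{appendix-A} is formulated for possibly unbounded $\Omega$; however, with $\Omega$ bounded this is routine, as sketched above. The only other subtlety is justifying the passage from Leray--Schauder degree to Brouwer degree via the reduction property, which applies because $\Phi_{QN}$ factors through the finite-dimensional space $\ker L$ and $Id - \Phi_{QN}$ coincides on $\ker L$ with $-JQN|_{\ker L}$.
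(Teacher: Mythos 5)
Your proof is correct and is essentially the classical argument for Mawhin's continuation theorem that the paper itself omits, referring instead to \cite[Theorem~IV.1]{GaMa-1977} and \cite[Theorem~2.4]{Ma-1993}: the homotopy $\vartheta Nu+(1-\vartheta)QNu$, the observation that applying $Q$ to the coincidence equation forces $QNu=0$, and the finite-dimensional reduction of $Id-\Phi_{QN}=-JQN$ on $\ker L$ are exactly the steps of that standard proof. All the compactness and admissibility checks you carry out are the right ones and go through for bounded $\Omega$.
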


\begin{lemma}\label{lem-abs-deg0}
Let $L$ and $N$ be as above and let $\Omega\subseteq X$ be an open and bounded set.
Suppose that there exist a vector $v\neq0$ and a constant $\alpha_{0} > 0$ such that
\begin{equation*}
Lu \neq Nu + \alpha v,
\quad \forall \, u\in \text{\rm dom}\,L \cap \partial\Omega, \; \forall\, \alpha\in \mathopen{[}0,\alpha_{0}\mathclose{]},
\end{equation*}
and
\begin{equation*}
Lu \neq Nu + \alpha_{0} v, \quad \forall \, u\in \text{\rm dom}\,L \cap \Omega.
\end{equation*}
Then
\begin{equation*}
D_{L}(L-N,\Omega) = 0.
\end{equation*}
\end{lemma}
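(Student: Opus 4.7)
The plan is to connect $L-N$ with $L - N - \alpha_{0} v$ through an admissible homotopy, and then observe that at the endpoint of the homotopy the coincidence equation has no solutions in $\Omega$ at all. Concretely, I would introduce the continuous homotopy
\begin{equation*}
H\colon \mathopen{[}0,1\mathclose{]}\times X \to Z, \qquad H_{\vartheta}(u) := Nu + \vartheta\,\alpha_{0}\,v,
\end{equation*}
and show that $D_{L}(L-H_{0},\Omega) = D_{L}(L-H_{1},\Omega)$ via the homotopic invariance property stated after Lemma~\ref{lem-abs-deg0}.

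First I would verify admissibility on the boundary. If $u\in \text{\rm dom}\,L\cap \partial\Omega$ and $\vartheta\in\mathopen{[}0,1\mathclose{]}$, then setting $\alpha := \vartheta\alpha_{0}\in\mathopen{[}0,\alpha_{0}\mathclose{]}$, the first hypothesis gives $Lu \neq Nu + \alpha v = H_{\vartheta} u$. Hence the coincidence set
\begin{equation*}
\mathcal{S} := \bigcup_{\vartheta\in\mathopen{[}0,1\mathclose{]}} \bigl\{ u\in \Omega\cap \text{\rm dom}\,L \colon Lu = H_{\vartheta}u \bigr\}
\end{equation*}
is contained in the interior of $\Omega$. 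Since $\Omega$ is bounded and $N$ is $L$-completely continuous, the map
\begin{equation*}
(\vartheta,u) \mapsto K_{P}(Id-Q)H_{\vartheta}(u) = K_{P}(Id-Q)N u + \vartheta\alpha_{0} K_{P}(Id-Q) v
\end{equation*}
is compact on $\mathopen{[}0,1\mathclose{]}\times\overline{\Omega}$ (the second summand is a fixed element of $X$ scaled continuously by $\vartheta$). A standard Leray--Schauder argument then shows $\mathcal{S}$ is compact, and one can take $\mathcal{W}=\Omega$ as the neighborhood required by the homotopic invariance statement.

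Applying homotopic invariance yields
\begin{equation*}
D_{L}(L-N,\Omega) \;=\; D_{L}(L-H_{0},\Omega) \;=\; D_{L}(L-H_{1},\Omega) \;=\; D_{L}(L-N-\alpha_{0}v,\Omega).
\end{equation*}
Now the second hypothesis of the lemma says precisely that $Lu \neq Nu + \alpha_{0} v$ for \emph{every} $u\in \text{\rm dom}\,L \cap \Omega$, so the fixed point set of the associated operator $\Phi_{1}=P+JQH_{1}+K_{P}(Id-Q)H_{1}$ in $\Omega$ is empty. By the definition of $D_{L}$ via an open bounded neighborhood of the (empty) coincidence set, or equivalently by the excision property applied with the empty subset, we conclude
\begin{equation*}
D_{L}(L-N-\alpha_{0}v,\Omega) = 0,
\end{equation*}
which completes the argument.

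The only place that requires a little care is the admissibility of the homotopy in the sense of the abstract framework, namely the compactness of $\mathcal{S}$ and of $(K_{P}(Id-Q)H)|_{[0,1]\times\overline{\mathcal{W}}}$; this is the main (and only) technical point, but it follows routinely from boundedness of $\Omega$, $L$-complete continuity of $N$, and the fact that the added term $\vartheta\alpha_{0} v$ is a norm-continuous path in $Z$. No deeper ingredient is needed beyond the properties of the coincidence degree recalled in Appendix~\ref{appendix-A}.
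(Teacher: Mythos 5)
Your argument is correct and follows essentially the same route as the paper: although the paper omits a direct proof of Lemma~\ref{lem-abs-deg0} (citing it as classical), its proof of the more general Lemma~\ref{lem-deg0-deFigueiredo} is precisely this homotopy from $L-N$ to $L-N-\alpha_{0}v$, combined with the observation that the endpoint equation has no solutions in $\Omega$ and hence zero degree. One small slip: you cannot take $\mathcal{W}=\Omega$ in the homotopic-invariance property, since that property requires $\overline{\mathcal{W}}\subseteq\Omega$, which fails for a bounded open set; instead, take any open neighborhood $\mathcal{W}$ of the compact set $\mathcal{S}$ at positive distance from $\partial\Omega$ — boundedness of $\Omega$ and the $L$-complete continuity of $N$ then give the required compactness of $(K_{P}(Id-Q)H)|_{[0,1]\times\overline{\mathcal{W}}}$, and the rest of your argument goes through unchanged.
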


Finally we state and prove a key lemma for the computation of the degree in open and unbounded sets.
This result is a more general version of Lemma~\ref{lem-abs-deg0}.

\begin{lemma}\label{lem-deg0-deFigueiredo}
Let $L$ and $N$ be as above and let $\Omega\subseteq X$ be an open set.
Suppose that there exist a vector $v\neq0$ and a constant $\alpha_{0} > 0$ such that
\begin{itemize}
 \item [$(a)$] $Lu \neq Nu + \alpha v$, for all $u\in \text{\rm dom}\,L \cap \partial\Omega$ and for all $\alpha\geq0$;
 \item [$(b)$] for all $\beta\geq0$ there exists $R_{\beta}>0$ such that if
 there exist $u\in \overline{\Omega}\cap \text{\rm dom}\,L$ and $\alpha\in\mathopen{[}0,\beta\mathclose{]}$ with
             $Lu = Nu + \alpha v$, then $\|u\|_{X}\leq R_{\beta}$;
 \item [$(c)$] there exists $\alpha_{0}>0$ such that $Lu \neq Nu + \alpha v$, for all $u\in \text{\rm dom}\,L \cap \Omega$ and $\alpha\geq\alpha_{0}$.
\end{itemize}
Then
\begin{equation*}
D_{L}(L-N,\Omega) = 0.
\end{equation*}
\end{lemma}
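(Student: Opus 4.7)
The strategy is to interpolate between $N$ and $N+\alpha_{0}v$ via the homotopy
\begin{equation*}
H_{\vartheta}u := Nu + \vartheta\alpha_{0}v, \qquad \vartheta\in\mathopen{[}0,1\mathclose{]},
\end{equation*}
and to invoke the homotopic invariance of the coincidence degree for open (possibly unbounded) sets recalled earlier in this appendix. Since $H_{0}=N$ and, by $(c)$, the equation $Lu=H_{1}u$ has no solution in $\Omega\cap\text{\rm dom}\,L$, the conclusion $D_{L}(L-N,\Omega)=0$ will follow as soon as the homotopy is admissible.

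First I would verify that the total solution set
\begin{equation*}
\mathcal{S}:=\bigcup_{\vartheta\in\mathopen{[}0,1\mathclose{]}}\bigl\{u\in\Omega\cap\text{\rm dom}\,L \colon Lu=H_{\vartheta}u\bigr\}
\end{equation*}
is compact. Boundedness follows directly from $(b)$ with $\beta=\alpha_{0}$, giving $\|u\|_{X}\leq R_{\alpha_{0}}$ for every $u\in\mathcal{S}$. Next, using that equation $Lu=Nu+\vartheta\alpha_{0}v$ is equivalent to the fixed point problem
\begin{equation*}
u = \Phi u + \vartheta\alpha_{0}w, \qquad w:=JQv+K_{P}(Id-Q)v,
\end{equation*}
and that $\Phi=P+JQN+K_{P}(Id-Q)N$ sends the bounded ball $B(0,R_{\alpha_{0}})\cap\overline{\Omega}$ into a relatively compact subset of $X$ (by the $L$-complete continuity of $N$), I obtain that $\mathcal{S}$ is contained in a relatively compact set. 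Closedness of $\mathcal{S}$ in $X$ follows from the continuity of $\Phi$ and from $(a)$: any limit point $u_{\ast}$ of a sequence $(u_{n})\subseteq\mathcal{S}$ with parameters $\vartheta_{n}\to\vartheta_{\ast}$ satisfies $Lu_{\ast}=H_{\vartheta_{\ast}}u_{\ast}$ with $u_{\ast}\in\overline{\Omega}$, and $(a)$ rules out $u_{\ast}\in\partial\Omega$, hence $u_{\ast}\in\Omega$. Thus $\mathcal{S}$ is compact.

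Second, since $\mathcal{S}$ is a compact subset of the open set $\Omega$, I can choose an open bounded neighborhood $\mathcal{W}$ of $\mathcal{S}$ with $\overline{\mathcal{W}}\subseteq\Omega$ (for instance, by covering $\mathcal{S}$ with finitely many open balls of sufficiently small radius contained in $\Omega$). On the bounded set $\overline{\mathcal{W}}$, the map $K_{P}(Id-Q)H$ is compact as a function of $(\vartheta,u)\in\mathopen{[}0,1\mathclose{]}\times\overline{\mathcal{W}}$: it is continuous, and its image equals $K_{P}(Id-Q)N(\overline{\mathcal{W}})$ shifted by the compact segment $\{\vartheta\alpha_{0}K_{P}(Id-Q)v:\vartheta\in\mathopen{[}0,1\mathclose{]}\}$, which is relatively compact by the $L$-complete continuity of $N$. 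All the hypotheses of the homotopic invariance property are therefore met, and I conclude
\begin{equation*}
D_{L}(L-N,\Omega)=D_{L}(L-H_{0},\Omega)=D_{L}(L-H_{1},\Omega)=0,
\end{equation*}
the last equality coming from the fact that, by $(c)$, the set $\text{\rm Fix}(\Phi_{H_{1}},\Omega)$ is empty, so the definition of coincidence degree on open sets yields $D_{L}(L-H_{1},\Omega)=\text{deg}_{LS}(Id-\Phi_{H_{1}},\emptyset,0)=0$.

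The main technical obstacle is the compactness of $\mathcal{S}$: condition $(b)$ alone only yields boundedness, and closedness in the unbounded ambient $\Omega$ requires $(a)$ to exclude limit points on $\partial\Omega$, while relative compactness relies on the $L$-complete continuity of $N$ translated into the fixed point reformulation. Once $\mathcal{S}$ is known to be compact and contained in $\Omega$, the separation of $\mathcal{S}$ from $\partial\Omega$ and the construction of $\mathcal{W}$ are standard, and the rest of the argument is a routine application of homotopic invariance.
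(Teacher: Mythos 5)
Your proposal is correct and follows essentially the same route as the paper's proof: the homotopy $L u = Nu + \alpha v$ for $\alpha$ running from $0$ to $\alpha_{0}$, compactness of the total solution set via $(a)$, $(b)$ and the $L$-complete continuity of $N$ in the fixed-point reformulation, vanishing of the degree at $\alpha=\alpha_{0}$ by $(c)$, and homotopic invariance. Your write-up is in fact slightly more explicit than the paper's on the verification of the admissibility conditions for the homotopy (closedness of $\mathcal{S}$ and compactness of $K_{P}(Id-Q)H$ on $\mathopen{[}0,1\mathclose{]}\times\overline{\mathcal{W}}$), but the argument is the same.
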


\begin{proof}
For $\alpha\geq 0$, let us consider the set
\begin{equation*}
\mathcal{R}_{\alpha}
:=\bigl{\{} u\in \overline{\Omega}\cap \text{\rm dom}\,L \colon Lu = Nu + \alpha v\bigr{\}}
= \bigl{\{} u\in \overline{\Omega} \colon u = \Phi u + \alpha v^{*}\bigr{\}},
\end{equation*}
where $v^{*} := JQv + K_{P}(Id-Q)v$.
Without loss of generality, we assume that $R_{\alpha'}<R_{\alpha''}$ for $\alpha'<\alpha''$.
By conditions $(a)$, for all $\alpha\geq0$, the solution set $\mathcal{R}_{\alpha}$ is disjoint from $\partial\Omega$.
Moreover, by conditions $(b)$ and $(c)$, $\mathcal{R}_{\alpha}$ is contained in $\Omega\cap B(0,R_{\alpha_{0}+1})$.
So $\mathcal{R}_{\alpha}$ is bounded, and hence compact.
In this manner we have proved that the coincidence degree $D_{L}(L-N-\alpha v,\Omega)$ is well defined for any $\alpha\geq 0$.

Now, condition $(c)$, together with the property of existence of solutions when the degree $D_{L}$ is nonzero,
implies that there exists $\alpha_{0}\geq 0$ such that
\begin{equation*}
D_{L}(L-N- \alpha_{0} v,\Omega)=0.
\end{equation*}
On the other hand, from condition $(b)$ applied to $\beta = \alpha_{0}$,
repeating the same argument as above, we find that the set
\begin{equation*}
\mathcal{S}
:= \bigcup_{\alpha\in \mathopen{[}0,\alpha_{0}\mathclose{]}} \mathcal{R}_{\alpha} \,
= \bigcup_{\alpha\in \mathopen{[}0,\alpha_{0}\mathclose{]}} \bigl{\{}u\in \overline{\Omega}\cap \text{\rm dom}\,L \colon Lu = N u + \alpha v\bigr{\}}
\end{equation*}
is a compact subset of $\Omega$. Hence, by the homotopic invariance of the coincidence degree, we have that
\begin{equation*}
D_{L}(L-N,\Omega) = D_{L}(L-N - \alpha_{0} v,\Omega) = 0.
\end{equation*}
This concludes the proof.
\end{proof}

\section{A combinatorial lemma}\label{appendix-B}

In this final appendix we present the key lemma for the proof of our main multiplicity result.
The proof is based on the same inductive argument of \cite[Lemma~4.1]{FeZa-2015jde}.

\begin{lemma}\label{lem-B1}
Let $\mathcal{I}\subseteq\{1,\ldots,n\}$ be a set of indices.
Suppose that for all ${\mathcal{J}}\subseteq {\mathcal{I}}$
the coincidence degree is defined on the sets $\Lambda^{\mathcal{J}}$ and $\Omega^{\mathcal{J}}$, with
\begin{equation}\label{eq-B1}
D_{L}(L-N,\Omega^{\emptyset}) = D_{L}(L-N,\Lambda^{\emptyset}) = 1
\end{equation}
and
\begin{equation}\label{eq-B2}
D_{L}(L-N,\Omega^{\mathcal{J}})=0, \quad \forall \, \emptyset\neq\mathcal{J}\subseteq {\mathcal{I}}.
\end{equation}
Then
\begin{equation}\label{eq-B3}
D_{L}(L-N,\Lambda^{\mathcal{I}})=(-1)^{\#\mathcal{I}}.
\end{equation}
\end{lemma}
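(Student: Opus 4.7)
The proof will proceed by induction on $n = \#\mathcal{I}$, using the additivity property of the coincidence degree applied to the disjoint decomposition
\begin{equation*}
\Omega^{\mathcal{I}} \;=\; \bigsqcup_{\mathcal{J}\subseteq\mathcal{I}} \Lambda^{\mathcal{J}},
\end{equation*}
together with the elementary combinatorial identity $\sum_{\mathcal{J}\subseteq\mathcal{I}}(-1)^{\#\mathcal{J}} = 0$ valid for every nonempty $\mathcal{I}$. The base case $\mathcal{I}=\emptyset$ is immediate from \eqref{eq-B1}, since $(-1)^0 = 1$.

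For the inductive step, assume the conclusion \eqref{eq-B3} has been established for every proper subset of $\mathcal{I}$. The hypotheses of the lemma are hereditary: if $\mathcal{J}\subsetneq\mathcal{I}$, then for all $\mathcal{K}\subseteq\mathcal{J}$ one has $\mathcal{K}\subseteq\mathcal{I}$, hence $D_{L}(L-N,\Omega^{\mathcal{K}})$ is defined and equals $1$ or $0$ according to whether $\mathcal{K}$ is empty or not, exactly as required to apply the inductive hypothesis to $\mathcal{J}$. The additivity property, together with the disjoint decomposition above and \eqref{eq-B2} applied to $\mathcal{I}$, yields
\begin{equation*}
0 \;=\; D_{L}(L-N,\Omega^{\mathcal{I}}) \;=\; \sum_{\mathcal{J}\subseteq\mathcal{I}} D_{L}(L-N,\Lambda^{\mathcal{J}})
\;=\; D_{L}(L-N,\Lambda^{\mathcal{I}}) + \sum_{\mathcal{J}\subsetneq\mathcal{I}} (-1)^{\#\mathcal{J}},
\end{equation*}
where the last equality uses the inductive hypothesis on every proper subset. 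Solving for the degree on $\Lambda^{\mathcal{I}}$ and invoking $\sum_{\mathcal{J}\subseteq\mathcal{I}}(-1)^{\#\mathcal{J}} = 0$, I obtain
\begin{equation*}
D_{L}(L-N,\Lambda^{\mathcal{I}}) \;=\; -\sum_{\mathcal{J}\subsetneq\mathcal{I}} (-1)^{\#\mathcal{J}} \;=\; (-1)^{\#\mathcal{I}},
\end{equation*}
which completes the induction.

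\textbf{Main obstacle.} The only subtle point, not the algebra itself, is justifying the application of additivity: one needs the sets $\Lambda^{\mathcal{J}}$ (as $\mathcal{J}$ ranges over subsets of $\mathcal{I}$) to be pairwise disjoint open subsets of $\Omega^{\mathcal{I}}$ whose union captures the entire fixed point set of $\Phi$ inside $\Omega^{\mathcal{I}}$. Disjointness and openness are immediate from the definitions in \eqref{eq-2.omega}--\eqref{eq-2.lambda}. The inclusion $\mathrm{Fix}(\Phi,\Omega^{\mathcal{I}})\subseteq\bigsqcup_{\mathcal{J}\subseteq\mathcal{I}}\Lambda^{\mathcal{J}}$ reduces to ruling out fixed points on the ``intermediate boundary'' $\{u:\max_{I^{+}_{i}}|u(t)|=d\text{ for some }i\in\mathcal{I}\}$; this is exactly the content of the verification of $(A_{\mathcal{J},r})$ carried out in Section~\ref{section-4.4} for $\mu>\mu^{*}$, and is implicitly built into the standing hypothesis that $D_{L}(L-N,\Lambda^{\mathcal{J}})$ and $D_{L}(L-N,\Omega^{\mathcal{J}})$ are well defined for every $\mathcal{J}\subseteq\mathcal{I}$. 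Once this is in place, the argument above is purely formal.
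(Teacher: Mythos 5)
Your proof is correct and follows essentially the same route as the paper: both arguments induct on the cardinality of the index set, decompose $\Omega^{\mathcal{I}}$ as the disjoint union of the $\Lambda^{\mathcal{J}}$ for $\mathcal{J}\subseteq\mathcal{I}$, apply additivity together with \eqref{eq-B2}, and conclude via the identity $\sum_{\mathcal{J}\subseteq\mathcal{I}}(-1)^{\#\mathcal{J}}=0$. Your closing remark on why additivity is legitimately applicable (no fixed points on the level-$d$ ``intermediate boundary'') matches the justification the paper gives in Section~\ref{section-4.1}.
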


\begin{proof}
First of all, we notice that, in view of \eqref{eq-B1}, the conclusion is trivially satisfied when $\mathcal{I} = \emptyset$.
Suppose now that $m:=\#{\mathcal{I}} \geq 1$.
We are going to prove our claim by using an inductive argument.
More precisely, for every integer $k$ with $0\leq k\leq m$, we introduce the property ${\mathscr{P}(k)}$
which reads as follows:
\textit{the formula
\begin{equation*}
D_{L}(L-N,\Lambda^{\mathcal{J}})=(-1)^{\#\mathcal{J}}
\end{equation*}
holds for each subset ${\mathcal{J}}$ of ${\mathcal{I}}$ having at most $k$ elements.}
In this manner, if we are able to prove ${\mathscr{P}}(m)$, then \eqref{eq-B3} follows.

\medskip
\noindent
\textit{Verification of ${\mathscr{P}}(0)$. } It follows by hypothesis \eqref{eq-B1}.

\medskip
\noindent
\textit{Verification of ${\mathscr{P}}(1)$. }
Condition \eqref{eq-B1} covers the case $\mathcal{J} =\emptyset$.
For ${\mathcal{J}}=\{j\}$, with $j\in {\mathcal{I}}$, we have
\begin{equation*}
\begin{aligned}
D_{L}(L-N,\Lambda^{\mathcal{J}})
    & = D_{L}(L-N,\Lambda^{\{j\}}) = D_{L}(L-N,\Omega^{\{j\}}\setminus \Lambda^{\emptyset})
\\  & = 0-1=-1 =(-1)^{\#\mathcal{J}}.
\end{aligned}
\end{equation*}

\medskip
\noindent
\textit{Verification of ${\mathscr{P}}(k-1)\Rightarrow{\mathscr{P}}(k)$, for $1\leq k\leq m$. }
Assuming the validity of ${\mathscr{P}}(k-1)$ we have that the formula is true for every subset of ${\mathcal{I}}$
having at most $k-1$ elements. Therefore, in order to prove ${\mathscr{P}}(k)$, we have only to check that the formula is true for an
arbitrary subset $\mathcal{J}$ of $\mathcal{I}$ with $\#{\mathcal{J}} = k$.
Writing $\Omega^{\mathcal{J}}$ as the disjoint union
\begin{equation*}
\Omega^{\mathcal{J}} = \Lambda^{\mathcal{J}} \cup \bigcup_{\mathcal{K}\subsetneq\mathcal{J}}\Lambda^{\mathcal{K}},
\end{equation*}
by the inductive hypothesis and assumption \eqref{eq-B2}, we obtain
\begin{equation*}
\begin{aligned}
&       D_{L}(L-N,\Lambda^{\mathcal{J}})
      = D_{L}(L-N,\Omega^{\mathcal{J}})-\sum_{\mathcal{K}\subsetneq\mathcal{J}}D_{L}(L-N,\Lambda^{\mathcal{K}}) =
\\  & = 0-\sum_{\mathcal{K}\subsetneq\mathcal{J}}(-1)^{\#\mathcal{K}}
      = -\sum_{\mathcal{K}\subseteq\mathcal{J}}(-1)^{\#\mathcal{K}}+(-1)^{\#\mathcal{J}}.
\end{aligned}
\end{equation*}
Observe now that
\begin{equation*}
\sum_{\mathcal{K}\subseteq\mathcal{J}}(-1)^{\#\mathcal{K}} = 0,
\end{equation*}
due to the fact that in a finite set there are so many subsets of even cardinality how many subsets of odd cardinality.
Thus we conclude that
\begin{equation*}
D_{L}(L-N,\Lambda^{\mathcal{J}}) = (-1)^{\#\mathcal{J}}.
\end{equation*}
Therefore ${\mathscr{P}}(k)$ is proved.
\end{proof}

In Section~\ref{section-4.1} we have proved \eqref{deg-emptyset} and \eqref{deg-J},
which correspond to the hypotheses \eqref{eq-B1} and \eqref{eq-B2} in the above combinatorial lemma.
The conclusion given by \eqref{eq-B3} guarantees the validity of \eqref{eq-deg-Lambda}.

\section*{Acknowledgment}
We thank Alberto Boscaggin for interesting discussions on the subject of the present paper.

\bibliographystyle{elsart-num-sort}
\bibliography{Feltrin_Zanolin_biblio}

\bigskip
\begin{flushleft}

{\small{\it Preprint}}

{\small{\it July 2015}}

\end{flushleft}

\end{document}